\documentclass[12pt]{article}

\usepackage{amsmath, amsfonts, amssymb}

\usepackage[utf8]{inputenc}
\usepackage[english]{babel}
\usepackage{todonotes}
\usepackage{comment}
\usepackage{enumitem}
\usepackage[left=2cm, right=3cm, top=2cm, bottom=2cm, bindingoffset=0cm, marginparwidth=3cm]{geometry}

\sloppy

\usepackage{hyperref}
\usepackage{float}
\usepackage[outdir=./]{epstopdf}

\usepackage{amsthm}

\usepackage{tikz}
\usetikzlibrary{arrows,calc}
\tikzset{
>=stealth',
}

\newcounter{theorems}
\newtheorem{prop}[theorems]{Proposition}
\newtheorem{thm}[theorems]{Theorem}
\newtheorem{lem}[theorems]{Lemma}

\newtheorem*{claim}{CLAIM}

\theoremstyle{definition}
\newtheorem{defin}[theorems]{Definition}
\theoremstyle{remark}

\newtheorem{rem}[theorems]{Remark}
\newtheorem*{rem*}{Remark}

\makeatletter
\def\blfootnote{\gdef\@thefnmark{}\@footnotetext}
\makeatother

\def\Om {\Omega }
\def\e {\varepsilon }
\def\phi {\varphi}
\def\be {\begin{equation}}
\def\ee {\end{equation}}
\def\bt {\begin{thm}}
\def\et {\end{thm}}

\def\RR {\mathbb R}
\def\ZZ {\mathbb Z}

\def\nn {\mathbb N}
\def\QQ {\mathbb Q}

\DeclareMathOperator{\dist}{dist}

\def\TT{{\mathbb{T}^2}}

\def\t{{\theta}}
\def\ddt{\frac{\partial}{\partial \t}}

\def\Cyf{\mathcal{C}_{(-1, 1)}}
\def\Cyc{\mathcal{C}_{[-1, 1]}}

\def\ddzeta{\frac{\partial}{\partial\zeta}}
\def\trho{\tilde{\rho}}

\begin{document}

\title{Attractors of direct products}
\author{Stanislav Minkov\footnote{Brook Institute of Electronic Control Machines, Moscow, Russia}, \; Ivan Shilin\footnote{National Research University Higher School of Economics, Moscow, Russia}\hphantom{1}\thanks{Corresponding author; e-mail: i.s.shilin@yandex.ru}}
\date{}

\maketitle

\begin{abstract}
For Milnor, statistical, and minimal attractors, we construct examples of smooth flows $\varphi$ on $S^2$ for which the attractor of the Cartesian square of~$\varphi$ is smaller than the Cartesian square of the attractor of~$\varphi$. In the example for the minimal attractors, the flow~$\varphi$ also has a global physical measure such that its square does not coincide with the global physical measure of the square of $\varphi$.  
\end{abstract}

\blfootnote{\textit{Keywords.} Attractors, direct product, physical measure, synchronization.}

\blfootnote{\textit{Mathematics Subject Classification.} 37C70, 37C10, 37C40, 37E35.}

\section{Introduction}

When constructing examples of dynamical systems with required properties, it is not uncommon to utilize, at least as a piece of the construction, direct products of systems in lower dimensions. It is tempting to think that the attractor of the direct product of two systems always coincides with the direct product of their attractors. Although this holds, indeed, for so-called maximal attractors (we give all necessary definitions in Section~\ref{sec:def}), this is not true for several other types of attractors, namely, for Milnor, statistical, and minimal attractors, and also for the supports of physical measures, when the latter exist.

This non-coincidence for Milnor attractors was first considered by P.~Ashwin and M.~Field in~\cite{AF}: they conjectured that for the product of two planar flows with attracting homoclinic saddle loops the Milnor attractor does not contain the whole product of the two loops and verified that this indeed happened in a model case. N.~Agarwal, A.~Rodrigues, and M.~Field~\cite{ARF} proved this conjecture and also generalized this result to the case of arbitrary attracting polycycles formed by hyperbolic saddles. 

We start, in Section~\ref{sec:bowen}, with presenting an example of this type of non-coincidence for the case of statistical attractors. We consider the square of the so-called modified Bowen example: it is a flow on $S^2$ with a monodromic biangle formed by the separatrices of a saddle and a saddle-node; the presence of a saddle-node and of two separatrix connections makes it a flow of codimension~$3$. V.~Kleptsyn in~\cite{K06} used this example to show that the minimal attractor of a smooth flow can be a proper subset of its statistical attractor. We show, in Theorem~\ref{thm:stat}, that the statistical attractor of the square of such a flow is smaller than the square of the statistical attractor of the flow itself, and that the same also holds for Milnor attractors.

In Section~\ref{sec:loop} we revisit the codimension~$1$ example with an attracting separatrix loop and give a short proof (inspired by the argument that will appear later in Section~\ref{sec:twombe}) of the fact that for the square of such a flow the Milnor attractor is smaller than the square of the attractor of the flow itself (Theorem~\ref{thm:mil}). This is, of course, a particular case of the result of~\cite{ARF}, but it admits a proof that is much less technical than the proof for the general case given by Agarwal, Rodrigues, and~Field.

Although the product of two flows is a flow of infinite codimension, it is interesting to find, for every type of attractor, the least codimension for the factor-flows that is needed to construct an example of non-coincidence. For Milnor attractors of flows on $S^2$, both factor-flows have to be of codimension at least~$1$, and in this sense the product of separatrix loops provides the simplest possible example. Indeed, generic flows on the sphere are Morse--Smale\footnote{Generic flows are Morse--Smale for any orientable two-dimensional surface and also the projective plane, Klein bottle, and torus with a cross-cap, with respect to any $C^r$-topology on the space of vector fields, see~\cite{P62} and~\cite{G78}. This holds for other non-orientable surfaces if $r = 1$ (which can be seen by combining the discussion in~\cite{G78} with Pugh's Closing lemma), but remains an open question  for $r > 1$.}, and one can check that for a product of two Morse--Smale flows or a Morse--Smale flow and a codimension~$1$ flow the Milnor attractor coincides with the product of Milnor attractors of the factors.
As for the statistical attractors, we prove in Section~\ref{sec:lune} that for flows on~$S^2$ one cannot construct an example of non-coincidence analogous to the example from Section~\ref{sec:bowen} by taking a product of two arbitrary flows of codimension at most two, see Theorem~\ref{thm:codim2}.

In Section~\ref{sec:twombe} we generalize the result of Section~\ref{sec:bowen} to the case of a product of two arbitrary modified Bowen examples (Theorem~\ref{thm:twombe}). It may be worth noting that the product flows we consider can be viewed as examples of synchronization\footnote{Synchronization is a process where some dynamical properties of two systems (or of two trajectories of one system) evolve in an almost identical way, usually due to some interaction between the systems.
In our case synchronization takes the following form. We have two systems such that generic orbits spend almost all the time near two points: $A, B$ for the first system and $\tilde{A}, \tilde{B}$ for the second, but a generic orbit of the product-system spends almost all the time near the diagonal points $(A, \tilde{A})$ and $(B, \tilde{B})$ and neglects the pairs $(A, \tilde{B})$ and $(B, \tilde{A})$.
Synchronization in a similar sense was discussed in~\cite{GK16}; more classical definitions of synchronization can be found in, e.g.,~\cite{Y}.} between two systems that happens in the absence of interaction. In the case of a square of a flow, one would think that the reason for synchronization is that the systems are identical. However, the result of Section~\ref{sec:twombe} shows that this synchronization originates not from the exact coincidence of the two flows, but rather from some dynamical features shared by them.

Finally, in Section~\ref{sec:srb} we construct an example of a flow, of infinite codimension this time, whose square demonstrates the same phenomenon for minimal attractors and the supports of physical measures (see Theorem~\ref{thm:srb}). In this construction we utilize the idea of oscillating measures that might be of independent interest. The question of whether there exists an example of non-coincidence in finite codimension remains open for the case of minimal attractors and physical measures.

Flows on $S^2$ seem to provide a natural starting point and the simplest setting for studying how abundant this type of non-coincidence can be, which we attempted to do.
It would be interesting to find out what can be said about generic diffeomorphisms and flows (in higher dimensions) with respect to this non-coincidence.

\subsection*{Acknowledgements} The idea of the paper was conceived in a discussion with Alexey Okunev, to whom we are very grateful. We also want to thank Andrey Dukov whose comments on the draft of this paper greatly helped us in improving it. We are deeply indebted to Yulij Ilyashenko for his heartening words and remarks. Both authors were partially supported by the RFBR grant 20-01-00420-a.

\section{Definitions of attractors}\label{sec:def}

To make this section brief, all the definitions are given for the case of continuous time. Our phase space~$X$ will always be a compact $C^\infty$-smooth manifold (sometimes with boundary) with some natural metric and with Lebesgue probability measure, which we denote~$\mu$, and our dynamical system will be a $C^\infty$-smooth flow\footnote{We do not distinguish flows and semi-flows. In each case it will be clear from the context whether the flow is invertible or not.} $\varphi = (\varphi^t)_t$.  

If a dynamical system has an absorbing domain (also called trapping region), i.e., an open set~$U \subset X$ such that $\varphi^t(\overline U) \subset U$ for positive~$t$, then \emph{the maximal attractor of $\varphi$ in~$U$} is the set $A_{max}(\varphi, U) = \cap_{t \ge 0} \varphi^t(U)$. This type of attractor is of little interest to us, however, because, if two systems have absorbing domains, the maximal attractor of the product-system in the product of the domains coincides with the product of maximal attractors; the proof is straightforward.

Transitive maximal attractors are called \emph{topological attractors}. The product of topological attractors may fail to be transitive and hence fail to be a topological attractor: e.g., consider a product of attracting cycles with rational ratio of periods. However, this non-coincidence originates rather from definition than from some interesting dynamical feature.

We are interested in attractors definitions of which rely on a reference measure (in our case, Lebesgue) on the phase space, which allows these attractors to capture asymptotic behavior of most points while possibly neglecting what happens with a set of orbits of zero measure. One type of such attractors was introduced by J. Milnor in~\cite{M} under the name ``the likely limit set''. We refer to it as the Milnor attractor.

\begin{defin}[Milnor attractor, \cite{M}] 
The Milnor attractor~$A_{Mil}(\varphi)$ of a dynamical system~$\varphi$ is the smallest closed set that contains the $\omega$-limit sets of $\mu$-almost all orbits.
\end{defin}

\emph{Statistical} and \emph{minimal} attractors were introduced in~\cite{AAIS} and~\cite{GI96}, respectively. We will use the definitions from~\cite{I05} and~\cite{K06} that describe the attractors in terms of so-called inessential sets.

For an open set $U \subset X$, let $I_U$ be the indicator function of~$U$ and let $I^t_U$ and $\tilde{I}^t_U$ be defined by the following equalities:
\[I^t_U(x) = \left((\varphi^t)^*I_U\right)(x) = I_U(\varphi^t(x)) = I_{(\varphi^t)^{-1}(U)}(x);\]
\[\tilde{I}^T_U(x) = \frac{1}{T}\int_0^T I^t_U(x) dt = \frac{1}{T}\int_0^T I_U(\varphi^t(x))dt.\]

\begin{defin}[Inessential sets]
An open set $U \subset X$ is \emph{statistically inessential} for a flow~$\varphi$ if $\tilde{I}^T_U \to 0$ as $T \to +\infty$ for $\mu$-a.e. $x \in X$.
An open set~$U$ is \emph{minimally inessential} if $\tilde{I}^T_U \to 0$ in measure, w.r.t. measure~$\mu$.
\end{defin}

\begin{rem}
Equivalently, an open set $U$ is minimally inessential if
\[\frac{1}{T}\int_0^T(\varphi^t_*\mu)(U)dt \to 0\; \text{ as } \; T \to +\infty.\]
\end{rem}

\begin{defin}[Statistical and minimal attractors, as defined in~\cite{I05} or~\cite{K06}]
The \emph{statistical (minimal) attractor}~$A_{stat}(\varphi)$ is the complement of the union of all statistically (resp., minimally) inessential open sets.
\end{defin}

\begin{rem}
The definition of Milnor attractor can be reformulated to resemble these two definitions. Namely, let us say that an open set $U \subset X$ is Milnor-inessential if $I^t_U(x) \to 0$ as $t \to +\infty$ for $\mu$-a.e. $x \in X$. Then, as it is not difficult to see, the Milnor attractor can be defined as the complement of the union of all Milnor-inessential open sets.
\end{rem}

\begin{rem}\label{rem:hierarchy}
Since, first, convergence of $I^t_U$ as $t \to +\infty$ implies convergence of $\tilde{I}^t_U$, and second, convergence $\mu$-a.e. implies convergence in measure (for finite measures), we always have the following hierarchy of attractors: $A_{min} \subset A_{stat} \subset A_{Mil}$.
\end{rem}

Another way to define an attractor is via an ``attracting'' invariant measure: the attractor is its support. Most suitable are the notions of \emph{physical} and \emph{natural} measures, which may be viewed as analogues of SRB-measures for general, non-hyperbolic dynamical systems (see, e.g.,~\cite{BB03}; we adapt the definitions from~\cite{BB03} to the case of flows). Physical measures describe the distribution of $\mu$-a.e. orbit, while natural measures capture the limit behavior of the reference measure.

For a flow $\varphi$ on a compact manifold~$X$ with measure~$\mu$, a probability measure $\nu$ is called \emph{physical} if there is a set~$B$ with $\mu(B) > 0$ such that for any~$x \in B$ and any continuous function~$f\in C(X, \RR)$ the Birkhoff time average over the orbit of~$x$ is equal to the space average w.r.t.~$\nu$:
\[\lim\limits_{T \to +\infty}\frac{1}{T}\int_0^T f(\varphi^t(x)) dt = \int_X f\, d\nu.\]
In other words, the measure~$\nu$ is physical if one has weak-$*$ convergence of the measures $\frac{1}{T}\int_0^T \delta_{\varphi^t(x)}dt$ to~$\nu$, for~$x \in B$.\footnote{Here $\delta_x$ stands for the~$\delta$-measure at~$x \in X$ and $\int_0^T \delta_{\varphi^t(x)}dt$ stands for a Borel measure~$\mu_{x,T}$ defined, due to the Riesz representation theorem, by the requirement that for any continuous function~$f\in C(X, \RR)$ one must have
\[\int_X f d\mu_{x, T} = \int_0^T(\delta_{\varphi^t(x)}, f) dt = \int_0^T f(\varphi^t(x)) dt.\]}
The set $B$ at which we have the convergence is called \emph{the basin of}~$\nu$. We will say that a physical measure is \emph{global} if its basin~$B$ has full measure~$\mu$.

A measure $\nu$ is called \emph{natural} for a flow~$\varphi$ if there exists an open subset $U \subset X$ such that for any probability measure $\tilde{\mu}$ absolutely continuous with respect to~$\mu$ and with $\mathrm{supp}(\tilde{\mu}) \subset U$ one has weak-$*$ convergence
\[\frac{1}{T}\int_0^T \varphi^t_*\tilde{\mu} \;dt \to \nu \; \text{ as } \; T \to +\infty.\]
That is, we apply the Krylov--Bogolyubov averaging procedure to~$\tilde\mu$ and, if the only
accumulation point for the averages is $\nu$, for any~$\tilde\mu$, we say that the measure~$\nu$ is natural. If we have this with $U = X$, we say that~$\nu$ is \emph{the global natural measure}.

By Theorem~2.1 of~\cite{BB03}, which can be straightforwardly adapted to the case of continuous time, for a dynamical system with a compact phase space a physical measure is always also natural.
We will sometimes refer to these measures as SRB-measures; however, we want to emphasize that we deal with physical and natural measures only, and not other variations that are referred to as SRB-measures in the literature.

Finally, for the square of a flow (or, more generally, for the product of two flows) we will define the attractors using the measure $\mu \times \mu$, or rather its completion (this does not change the attractors), which is a probability Lebesgue measure on $X \times X$.


\section{Milnor and statistical attractors for the square of the modified Bowen example}\label{sec:bowen}

The so-called \emph{Bowen example} is a vector field that has an attracting biangle formed by the separatrices of two hyperbolic saddles. In this example, under some mild genericity conditions, the time averages do not converge for Lebesgue-a.e. point that is attracted to the biangle~\cite[Theorem~1]{G92} (see also~\cite[Theorem~1]{T}), and, therefore, there is no physical measure. It was shown in~\cite{GK07} that, moreover, the time averages also do not converge for typical measures absolutely continuous w.r.t. the Lebesgue measure, and so there is no natural measure in this example.

\emph{The modified Bowen example} is obtained by replacing one of the saddles of the biangle by a contracting saddle-node. As V.~Kleptsyn showed in~\cite{K06}, for such a field the minimal attractor does not coincide with the statistical one: if we restrict the flow to a monodromic\footnote{That is, a semi-neighborhood where trajectories make winds near the polycycle. It exists because the saddle-node was chosen to be contracting.} semi-neighborhood of this polycycle, the minimal attractor will contain only the saddle-node while the statistical one will consist of both singularities.\footnote{Obviously, the Milnor attractor will coincide with the whole polycycle, since every trajectory in the semi-neighborhood is attracted to the polycycle.} We adopt the name of the example and also some notation from~\cite{K06}.

\begin{thm}\label{thm:stat}
There exists a $C^\infty$-smooth flow~$\varphi$ on~$S^2$ of codimension $3$, namely the modified Bowen example, such that for its Cartesian square~$\Phi = \varphi\times \varphi$ the Milnor attractor $A_{Mil}(\Phi)$ is a proper subset of the Cartesian square of~$A_{Mil}(\varphi)$, and the same holds for the statistical attractors:
$$A_{Mil}(\varphi\times \varphi) \subsetneqq A_{Mil}(\varphi) \times A_{Mil}(\varphi);$$
$$A_{stat}(\varphi\times \varphi) \subsetneqq A_{stat}(\varphi) \times A_{stat}(\varphi).$$
\end{thm}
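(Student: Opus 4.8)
The plan is to analyze the dynamics of $\Phi = \varphi \times \varphi$ on the product of two monodromic semi-neighborhoods of the polycycle, and to show that, although the statistical attractor $A_{stat}(\varphi)$ of a single factor consists of both singularities — the saddle $S$ and the saddle-node $N$ — the diagonal ``off-diagonal'' singularities $(S, N)$ and $(N, S)$ do not appear in either $A_{stat}(\Phi)$ or $A_{Mil}(\Phi)$. Since $A_{Mil}(\varphi) = A_{stat}(\varphi)$ here equals the full polycycle for the first and $\{S, N\}$ for the relevant semi-neighborhood version (I will state things carefully for the version that makes the strict inclusion cleanest, following Kleptsyn's setup in~\cite{K06}), exhibiting even a single point of the Cartesian square that is missing from the product attractor suffices for both strict inclusions. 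The natural candidate point is $(S, N)$ (equivalently $(N, S)$): it lies in $A_{stat}(\varphi) \times A_{stat}(\varphi)$, and I will show a whole neighborhood of it is statistically inessential (hence a fortiori minimally\,/\,Milnor-inessential) for $\Phi$.

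The key mechanism is the following quantitative estimate on the return dynamics. Near the contracting saddle-node $N$, a trajectory of $\varphi$ entering a small neighborhood spends a time that is polynomially long in the inverse of the entry coordinate (the saddle-node direction gives algebraically slow passage), whereas near the hyperbolic saddle $S$ the transit time is only logarithmically long. Kleptsyn's analysis shows that, for a typical point, the fraction of time spent near $N$ up to time $T$ tends to $1$ along a subsequence but the time near $S$ also captures a definite fraction along another subsequence — this is exactly why $A_{stat}(\varphi) = \{S, N\}$ while $A_{min}(\varphi) = \{N\}$. For the product, I would track the pair of ``phases'' $(\theta_1(t), \theta_2(t))$ describing which leg of the polycycle each coordinate is traversing. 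The heart of the argument is a synchronization statement: because the passage time near $N$ is so much longer (superpolynomially, after iterating) than near $S$, once one coordinate sits near $N$ the other coordinate makes many full loops around its polycycle, so the second coordinate also spends almost all of that long time-window near its own copy of $N$; conversely, time-windows in which a coordinate is near $S$ are so short that the other coordinate cannot simultaneously be near $N$ (it would be stuck there for much longer). Hence $\tilde I^T_U(x,y) \to 0$ a.e.\ for $U$ a small neighborhood of $(S,N)$, which is precisely statistical inessentiality.

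The cleanest way to organize this is via the induced return maps. Fix a cross-section transverse to one leg of the polycycle; let $P\colon (0,\varepsilon) \to (0,\varepsilon)$ be the return map of $\varphi$, with return-time function $\tau(r)$, where $\tau(r) \sim c\, r^{-\beta}$ as $r \to 0^+$ for some $\beta > 0$ dictated by the saddle-node, and with $P(r)$ comparable to $r^{1+\beta}$ or similar (I will extract the exact exponents from the normal forms). On the product section one studies $(r_1, r_2) \mapsto$ the pair of orbits run for a common time $T$, and one shows that for a.e.\ initial condition the set of times $t \le T$ at which coordinate $1$ is in the ``near-$S$'' portion while coordinate $2$ is in the ``near-$N$'' portion has density $\to 0$. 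This reduces to a Borel--Cantelli / ergodic-sum estimate for the sequences of return times, using that consecutive return times grow so fast that one long excursion near $N$ dominates a whole block of the other coordinate's history.

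The main obstacle I anticipate is making the synchronization estimate uniform and almost-everywhere, rather than merely ``generic along subsequences.'' The subtlety is that the two coordinates' excursions near $N$ are not literally synchronized — they start at different times and have different (random-looking) lengths — so I must argue that the mismatch is negligible at the level of time-averages: when coordinate $1$ is mid-excursion near $N$ for a duration $\tau_1$, coordinate $2$ completes $\Theta(\tau_1 / (\text{its current return time}))$ full loops, and each such loop already spends a $(1 - o(1))$-fraction of its duration near $N$, with the $o(1)$ controlled by how deep into the section the orbit has descended. Quantifying "how deep" requires the a.e.\ lower bounds on the descent rate toward $N$ that underlie Kleptsyn's result; I would invoke those (they are essentially the statement $A_{min}(\varphi) = \{N\}$, i.e.\ minimal inessentiality of neighborhoods of $S$, which I may cite from~\cite{K06}) and then bootstrap them to the product. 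A secondary technical point is bookkeeping the four legs of the two biangles and the corner passages near $S$ and $N$ so that "near $N$" and "near $S$" are genuinely open sets whose union exhausts a neighborhood of the polycycle; this is routine but must be set up so that the complement of $U$ near $(S,N)$ is handled and so that the argument also yields the symmetric exclusion of $(N,S)$.
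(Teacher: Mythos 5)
Your proposal targets the wrong point and the heuristic leading there is backwards; the argument as stated would actually \emph{prove} that the neighborhood in question is essential.

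Write $A$ for the saddle-node (your $N$) and $B$ for the saddle (your $S$), matching the paper. You propose to show that a neighborhood of $(B,A)$ (and of $(A,B)$) is statistically inessential for $\Phi$. But both of these points \emph{are} in $A_{stat}(\Phi)$; the paper computes $A_{stat}(\Phi) = \{(A,A),(A,B),(B,A)\}$, and the single excluded point is $(B,B)$, the pair of saddles. Your intuition is off because the transit time near the saddle $B$ in the modified Bowen example is \emph{not} merely logarithmic: after the saddle-node, a point hits $\Sigma_B$ at distance $\Delta_{sn}(x)\sim x^{-a}e^{-1/x}$, so the time near $B$ is $t_s(\Delta_{sn}(x))\sim \frac{1}{\lambda x}$, the same polynomial order as the time $t_{sn}(x)\sim\frac{1}{bx}$ near $A$. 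Hence each loop spends a fraction $\to \frac{b}{b+\lambda}>0$ of its duration near $B$. So when coordinate 2 is parked in a long $A$-window, coordinate 1 keeps visiting $U_B$ with a positive fraction of time, and the pair visits $U_B\times U_A$ with positive density; this is exactly why $(B,A)\in A_{stat}(\Phi)$. No amount of bookkeeping can make your target work.

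The fact the paper actually needs is Lemma~4 of~\cite{K06}: for any two points $x,y\in X\setminus\Gamma$ on different $\varphi$-orbits there is $t_0$ such that for $t>t_0$ the images $\varphi^t(x),\varphi^t(y)$ are \emph{never} in $U_B$ simultaneously. This is far stronger than ``the $U_B$-windows are short'': both windows grow without bound, and the lemma is a delicate separation-of-return-times statement (the paper later recasts it, in Section~\ref{sec:twombe}, in double-logarithmic time, where the sequences of returns separate by a positive distance). It is not implied by, and cannot be bootstrapped from, $A_{min}(\varphi)=\{A\}$ alone. Once you have this lemma the proof is immediate: by Fubini, almost every pair $(a,b)$ has its components on distinct orbits, so almost every $\Phi$-orbit eventually never enters $U_B\times U_B$; therefore $U_B\times U_B$ is Milnor-inessential (hence also statistically and minimally inessential), so $(B,B)\notin A_{Mil}(\Phi)\supset A_{stat}(\Phi)$, while $(B,B)\in A_{stat}(\varphi)^2\subset A_{Mil}(\varphi)^2$.

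A secondary issue: you claim that statistical inessentiality gives Milnor inessentiality ``a fortiori,'' but the hierarchy runs the other way. From $A_{min}\subset A_{stat}\subset A_{Mil}$ (Remark~\ref{rem:hierarchy}) one has Milnor-inessential $\Rightarrow$ statistically inessential $\Rightarrow$ minimally inessential, not the reverse. To exclude a point from the Milnor attractor you need the stronger pointwise statement $I^t_U\to 0$, not merely $\tilde I^T_U\to 0$; the paper's appeal to Lemma~4 of~\cite{K06} delivers exactly that, since the orbit \emph{never} re-enters $U_B\times U_B$ after time~$t_0$.
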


\begin{figure}[ht]
\center{\includegraphics[width=0.7\linewidth]{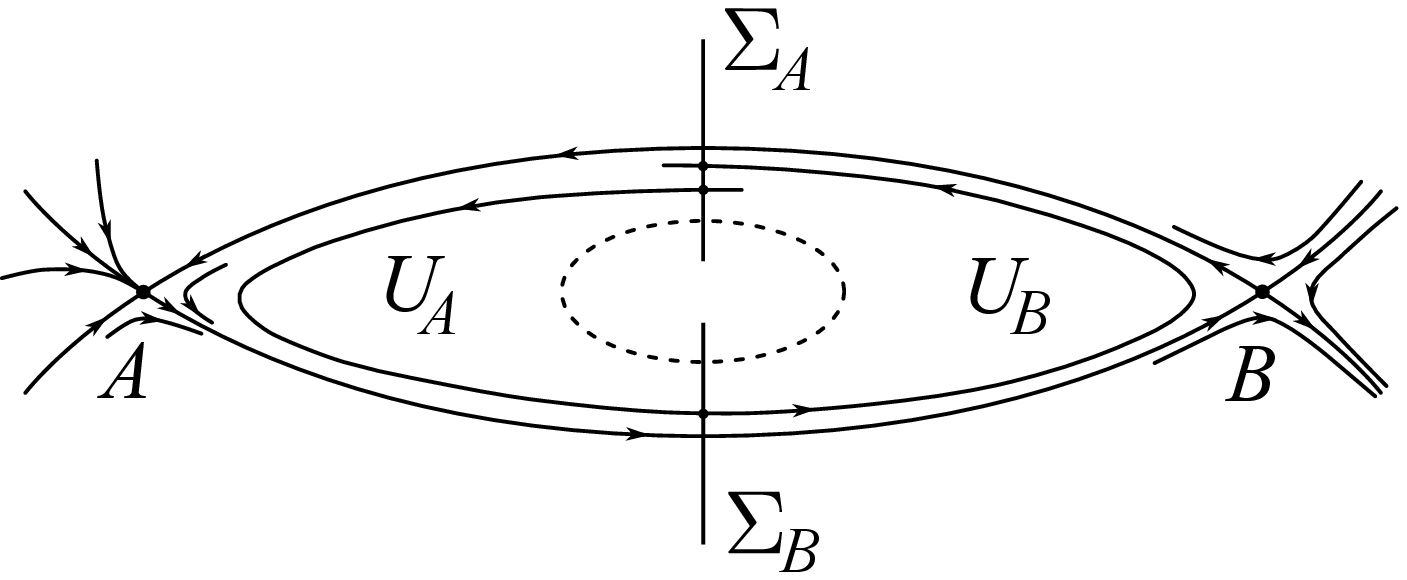}}
\caption{The phase portrait for the modified Bowen example.}\label{fig:1}
\end{figure}

\begin{rem}\label{rem:proj}
We always assume that the phase space~$X$ of~$\varphi$ is compact. The projection of $A_{Mil}(\Phi) \subset X \times X$ to~$X$ equals exactly~$A_{Mil}(\varphi)$. Indeed, a point $x \in X$ is not in this projection $\pi(A_{Mil}(\Phi))$ if and only if there is a neighborhood $U \ni x$ such that the open set~$U \times X$ is Milnor-inessential for~$\Phi$. But the latter condition on $U \times X$ is equivalent to~$U$ being Milnor-inessential for~$\varphi$. This means that~$x$  does not belong to $\pi(A_{Mil}(\Phi))$ iff it has an inessential neighborhood w.r.t~$\varphi$, which yields that $x \in \pi(A_{Mil}(\Phi))$ iff $x \in A_{Mil}(\varphi)$. The same argument is valid for statistical and minimal attractors, and it also works for arbitrary Cartesian products, not just squares. This implies that the attractor of the product is always a subset of the product of the attractors.
\end{rem}

\begin{proof}
Let our $\varphi$ be a flow that has a generic (that is, of order two\footnote{Here we refer to the order that the field has when restricted to the central manifold of the saddle-node. ``Order two'' means that the quadratic term of the restriction is non-zero.}) contracting saddle-node $A$ whose only outgoing separatrix coincides with an incoming separatrix of a saddle~$B$ and whose incoming separatrix (one of the two at the boundary of the parabolic sector of the saddle-node) coincides with an outgoing separatrix of~$B$. We assume that there are no further degeneracies, and so the flow is of codimension 3. The polycycle~$\Gamma$ formed by the singularities~$A$ and~$B$ and the two separatrices has a monodromic closed semi-neighborhood in which every trajectory either belongs to~$\Gamma$ or winds onto it. We may and will assume that our phase space~$X$ is not the whole $S^2$ but this semi-neighborhood, endowed with the normalized Lebesgue measure. Indeed, since our semi-neighborhood is forward-invariant for~$\varphi$ and has positive measure, if we prove the theorem with phase space being the semi-neighborhood, this will imply that it is also valid when $\varphi$ is viewed as a flow on~$S^2$.

We draw two transversals~$\Sigma_A, \Sigma_B$ to the separatrices that split $X$ into two domains~$U_A \ni A$ and~$U_B \ni B$, see Fig.~\ref{fig:1}. At some point below it will become important that we can choose the transversals arbitrarily, and so, for an appropriate choice of those, $U_A$ may be seen as a small neighborhood of the saddle-node~$A$.

As proven in Lemma~4 of~\cite{K06}, for any two points $x, y \in X\setminus\Gamma$ that do not share the same trajectory, there is a moment $t_0 > 0$ in time ($t_0$ depends on $x, y$) such that for $t > t_0$ the $\varphi^t$-images of these two points can never be in~$U_B$ together. We will deduce our theorem from this.

Let $\mu$ be the probability Lebesgue measure on~$X$. Since for a fixed $a \in X$ the set of $b \in X$ that belong to the orbit of $a$ has zero $\mu$-measure, the Fubini theorem yields, when considering the union of such sets over $a \in X$, that $(\mu \times \mu)$-a.e. pair $(a, b) \in X \times X$ has its points $a$ and~$b$ at different orbits. For a pair $(a, b)$ with this property, the aforementioned Lemma~4 of~\cite{K06} yields that the $\Phi$-orbit of $(a, b)$ never visits the set $U_B \times U_B$ when $t > t_0(a, b)$. This means that the open set $U_B \times U_B$ contains no points of the Milnor or statistical attractor. Thus, we conclude that $(B, B) \notin A_{Mil}(\varphi\times \varphi)$ and $(B, B) \notin A_{stat}(\varphi\times \varphi)$, while $(B, B) \in A_{stat}(\varphi) \times A_{stat}(\varphi) \subset A_{Mil}(\varphi) \times A_{Mil}(\varphi)$ since $B \in  A_{stat}(\varphi)$, which, when combined with Remark~\ref{rem:proj}, finishes the proof.
\end{proof}

For completeness we will also describe the attractors of $\Phi$ explicitly. Of course, the whole attractors depend on what happens outside of the semi-neighborhood~$X$, so in the following proposition we restrict ourselves to the phase space $X$, or $X \times X$ for the square flow.

\begin{prop}\label{prop:statstruct}
For the flow $\varphi$ as above the following holds:
$$A_{stat}(\varphi \times \varphi) = \{(A, A), (A, B), (B, A)\},$$
$$A_{Mil}(\varphi \times \varphi) = \left(\Gamma \times \{A\}\right) \cup \left(\{A\} \times \Gamma\right).$$
\end{prop}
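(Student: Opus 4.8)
The plan is to establish the two equalities separately, using the characterization of attractors via inessential sets together with Remark~\ref{rem:proj} and Lemma~4 of~\cite{K06}, and the analogous result from~\cite{K06} describing the attractors of $\varphi$ itself (namely $A_{stat}(\varphi) = \{A, B\}$, $A_{min}(\varphi) = \{A\}$, $A_{Mil}(\varphi) = \Gamma$, after restriction to $X$).

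First I would dispose of the statistical attractor. Since $A_{stat}(\varphi\times\varphi) \subset A_{stat}(\varphi)\times A_{stat}(\varphi) = \{A,B\}^2$ by Remark~\ref{rem:proj}, only the four diagonal-type points are candidates. The proof of Theorem~\ref{thm:stat} already shows $(B,B)\notin A_{stat}(\varphi\times\varphi)$. It remains to show that each of $(A,A)$, $(A,B)$, $(B,A)$ does lie in $A_{stat}(\varphi\times\varphi)$, i.e. that no neighborhood of any of them is statistically inessential for $\Phi$. For $(A,A)$: choosing the transversals so that $U_A$ is a small neighborhood of the saddle-node, a Lebesgue-generic point of $X$ spends asymptotically all of its time-average in $U_A$ (this is exactly the statement $A_{min}(\varphi)=\{A\}$ from~\cite{K06}, which gives $\tilde I^T_{U_A}\to 1$ in measure, and in fact — since $A_{stat}(\varphi)=\{A,B\}$ — one has $\tilde I^T_{U_A \cup U_B} \to 1$ a.e., but more is true near $A$). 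Actually the cleanest route is: by Kleptsyn's result the minimal attractor of $\varphi$ is $\{A\}$, which means $\tilde I^T_{U_B}\to 0$ in measure; combined with $\tilde I^T_{U_A}+\tilde I^T_{U_B}\to 1$ a.e. (every orbit stays in $U_A\cup U_B$), we get $\tilde I^T_{U_A}\to 1$ in measure, hence for a positive-measure set of pairs $(a,b)$ both coordinates spend most of their average time in $U_A$, so $\tilde I^T_{U_A\times U_A}(a,b)\not\to 0$; this shows $(A,A)\in A_{stat}(\varphi\times\varphi)$. For $(A,B)$ and $(B,A)$ one uses a projection argument: the projection of $A_{stat}(\Phi)$ to the first factor is $\{A,B\}$ (Remark~\ref{rem:proj}), and since $(B,B)\notin A_{stat}(\Phi)$, the point $B$ in the first factor can only be ``witnessed'' by $(B,A)$, forcing $(B,A)\in A_{stat}(\Phi)$; symmetrically $(A,B)\in A_{stat}(\Phi)$. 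This pins down $A_{stat}(\varphi\times\varphi) = \{(A,A),(A,B),(B,A)\}$.

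Next, the Milnor attractor. By Remark~\ref{rem:proj}, $A_{Mil}(\varphi\times\varphi)\subset \Gamma\times\Gamma$ and projects onto $\Gamma$ in each factor. The set $(\Gamma\times\{A\})\cup(\{A\}\times\Gamma)$ is $\Phi$-invariant and closed, so it is a natural candidate. For the inclusion $A_{Mil}(\varphi\times\varphi)\subseteq (\Gamma\times\{A\})\cup(\{A\}\times\Gamma)$: a $(\mu\times\mu)$-generic pair $(a,b)$ has $a,b$ on distinct orbits, so by Lemma~4 of~\cite{K06} there is $t_0$ with $\Phi^t(a,b)\notin U_B\times U_B$ for $t>t_0$; hence $I^t_{U_B\times U_B}(a,b)\to 0$, so $(B,B)\notin A_{Mil}(\Phi)$, and more generally no point of $U_B\times U_B$, in particular no point $(p,q)$ with $p,q$ both in the open arc of $\Gamma$ lying in $U_B$, is in $A_{Mil}(\Phi)$; but I also need to exclude points of $\Gamma\times\Gamma$ where neither coordinate is $A$ — such a point either lies in $U_B\times U_B$ (handled) or has a coordinate on the open separatrix arc, and one pushes it along the flow into $U_B\times U_B$ using invariance of $A_{Mil}(\Phi)$ under $(\varphi^t,\varphi^s)$ (this last invariance is the analogue, for the product, of Theorem~1.1 of~\cite{AF}, or can be derived directly). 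For the reverse inclusion $(\Gamma\times\{A\})\cup(\{A\}\times\Gamma)\subseteq A_{Mil}(\Phi)$: since $A_{Mil}(\Phi)$ is closed, $\Phi$-invariant, and projects onto $\Gamma$ in the first factor, it must meet $\{p\}\times\Gamma$ for every $p\in\Gamma$; I claim the only invariant subset of $\{p\}\times\Gamma$ (for $p$ on the separatrix, whose forward orbit limits on all of $\Gamma$) compatible with the dynamics forces $\{p\}\times\{A\}$ and, using the flow in the first coordinate and closedness, one sweeps out $\Gamma\times\{A\}$; symmetrically $\{A\}\times\Gamma$. A cleaner way: $A_{Mil}(\varphi)=\Gamma$ means a generic $a$ has $\omega(a)=\Gamma$; for generic $(a,b)$, $\omega_\Phi(a,b)$ is a closed $\Phi$-invariant subset of $\Gamma\times\Gamma$ projecting onto $\Gamma$ in each factor and avoiding $U_B\times U_B$ eventually, and the smallest such sets, whose union over generic $(a,b)$ is dense, force the closure to contain $(\Gamma\times\{A\})\cup(\{A\}\times\Gamma)$; combined with the upper bound this gives equality.

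The main obstacle I anticipate is the reverse inclusion for the Milnor attractor — showing that $\Gamma\times\{A\}$ (and $\{A\}\times\Gamma$) is \emph{entirely} contained in $A_{Mil}(\Phi)$, not just that $A_{Mil}(\Phi)$ meets each fiber $\{p\}\times\Gamma$. The correct tool is the invariance of the Milnor attractor of a product under the full group $(\varphi^t,\varphi^s)$, $t,s\in\RR$ (the product analogue of~\cite[Theorem~1.1]{AF}): once one knows $A_{Mil}(\Phi)$ contains some point $(p_0,A)$ with $p_0$ on the non-trivial separatrix, applying $(\varphi^t,\varphi^0)$ sweeps the whole separatrix arc in the first coordinate while keeping the second coordinate fixed at the equilibrium $A$, and taking the closure adds the endpoints $A$ and $B$; this yields $\Gamma\times\{A\}\subseteq A_{Mil}(\Phi)$. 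The delicate point is verifying that $A_{Mil}(\Phi)$ does contain such a point $(p_0,A)$ in the first place, which comes from the fact that a generic $b$ with $\omega(b)=\Gamma$ spends, near any given time, an overwhelming fraction of orbit length with $\varphi^t(b)$ near $A$ (since $A_{min}(\varphi)=\{A\}$), so that the $\omega$-limit set of a generic pair, intersected with an arbitrarily small neighborhood of the ``$A$ in the second coordinate'' slice $\Gamma\times U_A$, is nonempty and projects onto all of $\Gamma$ in the first coordinate; letting the neighborhood shrink and using compactness gives a point of $\overline{A_{Mil}(\Phi)}=A_{Mil}(\Phi)$ on $\Gamma\times\{A\}$ with arbitrary first coordinate, but one must be careful that this limiting argument genuinely produces points with first coordinate ranging over a dense — hence, by closedness, full — subset of $\Gamma$. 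I would handle this by fixing a dense sequence of points $p_n$ on the separatrix, choosing for each a generic pair whose first-coordinate orbit passes near $p_n$ precisely when the second coordinate is deep inside $U_A$, and extracting the desired accumulation points.
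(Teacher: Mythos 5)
Your handling of the statistical attractor matches the paper's in all essentials: exclude $(B,B)$ via Lemma~4 of \cite{K06} (already done in Theorem~\ref{thm:stat}), get $(A,A)$ from the minimal attractor, and force $(A,B)$, $(B,A)$ by the projection constraint of Remark~\ref{rem:proj}. (The paper is slightly slicker for $(A,A)$: since $A_{min}(\Phi)$ projects onto $A_{min}(\varphi)=\{A\}$ in each factor and is nonempty, $A_{min}(\Phi)=\{(A,A)\}\subset A_{stat}(\Phi)$; your inclusion--exclusion estimate on $\tilde I^T_{U_A\times U_A}$ is correct but unnecessary.)

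For the Milnor attractor you take a genuinely different and considerably heavier route, and as written it has a gap. You appeal, for both the upper and the lower inclusion, to the invariance of $A_{Mil}(\Phi)$ under the independent-time action $(\varphi^t,\varphi^s)$ (the product analogue of \cite[Theorem 1.1]{ARF}/\cite{AF}), a nontrivial external input that the paper's proof does not need. Even granting it, your lower-bound argument is incomplete: the polycycle $\Gamma$ consists of \emph{two} separatrix connections, so flowing a single point $(p_0, A)$ with $p_0$ on one arc and taking the closure yields only that arc together with $\{A,B\}$, not all of $\Gamma\times\{A\}$; to finish you would need to independently locate a point of the attractor on the other arc as well, which your accumulation-point sketch does not secure.

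The simplification you are missing is this. First establish the upper bound $A_{Mil}(\Phi)\subseteq\Omega:=(\Gamma\times\{A\})\cup(\{A\}\times\Gamma)$ directly: since $U_B\times U_B$ contains no point of the attractor, $A_{Mil}(\Phi)\subseteq(\Gamma\times\Gamma)\setminus(U_B\times U_B)$, and because the transversals can be taken arbitrarily close to $A$, so that $U_A$ is an arbitrarily small neighborhood of $A$, the right-hand side can be squeezed onto $\Omega$. Once this upper bound is in hand, the lower bound is immediate and needs no flow-invariance argument at all: any subset $S\subseteq\Omega$ whose two coordinate projections are all of $\Gamma$ must contain $\Omega\setminus\{(A,A)\}$ (for $p\neq A$ the only point of $\Omega$ over $p$ is $(p,A)$, and symmetrically), and if $S$ is also closed it must contain the limit point $(A,A)$, hence $S=\Omega$. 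This is exactly why ``meeting each fiber $\{p\}\times\Gamma$'' is, contrary to what you anticipate, already enough --- the rigidity comes from the upper bound $A_{Mil}(\Phi)\subseteq\Omega$, which you never quite use at full strength.
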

\begin{proof}
The statistical attractor of $\Phi$ is easy to find. Both natural projections ${X \times X \to X}$ must take it to $A_{stat}(\varphi)$ = $\{A, \, B\}$. Also, since ${A_{min}(\varphi) = \{A\}}$, we have that $\{(A, A)\} = A_{min}(\Phi) \subset A_{stat}(\Phi)$. As we know, the point $(B, B)$ is not in the attractor. Hence, there is only one possibility, namely: $A_{stat}(\Phi) = \{(A, A), (A, B), (B, A)\}$.

Now let us consider the Milnor attractor. Denote the set $\left(\Gamma \times \{A\}\right) \cup \left(\{A\} \times \Gamma\right)$ by $\Om$ for brevity.
Recall that by taking the transversals to the separatrices close to the saddle-node~$A$ we make $U_A$ be a small neighborhood of $A$ (in the space $X$). Thus the domain $U_B$ contains almost the whole polycycle~$\Gamma$. But, as we proved above, $U_B \times U_B$ contains no points of the Milnor attractor, so the attractor must be contained in the set
$$ (X \times X) \setminus (U_B \times U_B) = \overline{(X \times U_A) \cup (U_A \times X)}.$$
Keeping in mind that $A_{Mil}(\Phi)$ must be a subset of $A_{max}(\Phi, X^2) = \Gamma \times \Gamma$ and that we can make $U_A$ as small as we want, we conclude that $A_{Mil}(\Phi)$ must be a subset of~$\Om$. On the other hand, the natural projections of $A_{Mil}(\Phi)$ are both $A_{Mil}(\varphi)$. The only proper subset of~$\Om$ with this property is $\Om \setminus \{(A, A)\}$, but the Milnor attractor must be closed, hence $A_{Mil}(\Phi) = \Om$.
\end{proof}

\section{Milnor attractor for the square of an attracting separatrix loop}\label{sec:loop}

N.~Agarwal, A.~Rodrigues, and M.~Field proved (see~\cite[Theorem~1.1]{ARF}) that for the product of two planar vector fields with so-called heteroclinic attractors (attracting polycycles formed by hyperbolic saddles and their separatrices) the Milnor attractor (for appropriately restricted phase space) is a proper subset of the product of these attractors; this subset is a union of two products where the first heteroclinic attractor is multiplied by the discrete set of saddles of the second one, and vice versa. A special case where, instead of a product of arbitrary heteroclinic attractors, the square of one saddle loop is considered admits a much shorter proof that we would like to present in this section.

Let $\varphi$ be a smooth flow on $S^2$ with an attracting separatrix loop~$\gamma$ of a generic saddle\footnote{By `generic' we mean that the characteristic number (minus ratio of eigenvalues; the negative one goes to the numerator) is not equal to $1$, and for the loop to be attracting, as it is well-known, we need the characteristic number to be~$\ge 1$.}~$A$. Arguably this is the simplest flow for which we can observe the phenomenon of non-coincidence of the square of the attractor and the attractor of the square of the flow. However, this non-coincidence happens only for Milnor attractors. Indeed, if we restrict~$\varphi$ to a small monodromic semi-neighborhood of $\overline\gamma = \gamma \cup \{A\}$,  the Milnor attractor will be equal to~$\overline\gamma$, since it is the $\omega$-limit set for any orbit that winds onto the loop, while $A_{stat}(\varphi)$ (and hence also $A_{min}(\varphi)$) will contain only one point, the saddle: as a forward semi-orbit approaches the loop, the fraction of time it spends near the saddle tends to one\footnote{The same argument applies to a loop whose saddle has characteristic number~1 if the loop happens to be attracting.}. This leaves no room for non-coincidence in case of statistical or minimal attractors.

Let~$\Sigma_A$ be a transversal to the incoming local separatrix involved into the loop and let~$\Sigma_B$ be a transversal to the outgoing local separatrix. We will assume that~$\Sigma_A, \Sigma_B$ are taken close to the saddle~$A$ and that they split a small monodromic closed semi-neighborhood~$X$ of~$\overline\gamma$ into two domains $U_A \ni A$ and~$U_B$. There is no singularity~$B$ this time, but we emphasize the analogy with Theorem~\ref{thm:stat} by using the same notation.

\begin{thm}\label{thm:mil}
There exists a $C^\infty$-smooth flow~$\varphi$ of codimension $1$, namely, a flow with a generic attracting separatrix loop, such that for its Cartesian square~$\Phi = \varphi\times \varphi$ we have
$$A_{Mil}(\Phi) \subsetneqq A_{Mil}(\varphi) \times A_{Mil}(\varphi).$$
\end{thm}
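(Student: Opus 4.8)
The plan is to mimic the structure of the proof of Theorem~\ref{thm:stat}, with the single difference that there is no saddle-node anymore, so the key combinatorial lemma (Lemma~4 of~\cite{K06}) has to be replaced by a direct argument for the saddle loop. The target is to show that $U_B \times U_B$ is Milnor-inessential for $\Phi$, which, together with Remark~\ref{rem:proj} and the observation that $\overline\gamma \times \overline\gamma = A_{Mil}(\varphi) \times A_{Mil}(\varphi)$ contains points of the form $(x, y)$ with $x, y \in \gamma \setminus U_A$, immediately yields the strict inclusion.

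First I would recall the standard estimate for the return map near a hyperbolic saddle. If $\Sigma_A$ is parametrized by a coordinate $u \ge 0$ measuring distance to the separatrix, then one passage near the saddle followed by the regular flow along the loop sends $u$ to roughly $c\, u^{\lambda}$ for some $c > 0$, where $\lambda$ is the characteristic number (the one that is $\ge 1$ for an attracting loop); more importantly, the \emph{time} spent by a trajectory starting at $u$ during one such near-loop excursion is of order $|\ln u|$, and during that time all but a bounded amount (independent of $u$, and in fact all but an amount that is an arbitrarily small fraction of $|\ln u|$ once $u$ is small) is spent inside $U_A$, i.e. near the saddle. This is exactly the statement that for a single orbit winding onto the loop the fraction of time spent in $U_B$ tends to $0$, which was already used in the text to compute $A_{stat}(\varphi) = \{A\}$. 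The content I need is a quantitative, two-orbit version: if $a$ and $b$ wind onto the loop and are \emph{not on the same orbit}, then after some finite time the two orbits are ``out of phase'' enough that they are essentially never simultaneously in $U_B$; more precisely, the fraction of time up to $T$ during which $(\varphi^t(a), \varphi^t(b)) \in U_B \times U_B$ tends to $0$ as $T \to \infty$.

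The cleanest way I see to get this is the following. Fix $\varepsilon > 0$. Choose $U_A$ (equivalently, the transversals) so small that on any near-loop excursion the fraction of time spent in $U_B$ is at most $\varepsilon$; this is possible by the logarithmic time estimate above, uniformly over excursions that start sufficiently close to the separatrix, and for a $\mu$-a.e.\ point $a$ the excursions do start arbitrarily close after some time. Now for $(\mu\times\mu)$-a.e.\ pair $(a,b)$ the points lie on distinct orbits (Fubini, as in the proof of Theorem~\ref{thm:stat}), and I want to conclude that $\tilde I^T_{U_B \times U_B}(a,b) \to 0$. Both orbits spend a $(1-\varepsilon)$-fraction of their time in $U_A$, hence the set of $t$ with $\varphi^t(a) \in U_B$ has upper density $\le \varepsilon$ and likewise for $b$; the intersection of two such sets has upper density $\le \varepsilon$ trivially (indeed it is contained in each), so $\tilde I^T_{U_B\times U_B}(a,b) = \frac1T\int_0^T I_{U_B}(\varphi^t a) I_{U_B}(\varphi^t b)\,dt \le \frac1T\int_0^T I_{U_B}(\varphi^t a)\,dt \to$ (something $\le \varepsilon$). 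Since $\varepsilon$ was arbitrary and $U_B$ shrinks as $U_A$ grows --- wait, that is the subtlety: making $U_A$ large makes $U_B$ small, which is the wrong direction. So I would instead argue: for \emph{fixed} transversals, the fraction of an excursion spent in $U_B$ is bounded by a constant $\kappa < 1$ (not small), but the two orbits, being on different orbits and both accumulating on the loop, have their "time-in-$U_B$" windows occurring at near-loop passages whose \emph{timings} diverge, so that the overlap of the two window-sets has density strictly less than $\kappa$; and then one pushes $U_B$ itself to be small to finish. Honestly the more robust route is exactly the one the authors flag as "inspired by Section~\ref{sec:twombe}": show directly that for distinct orbits the passages desynchronize, so that $\limsup_T \tilde I^T_{U_B\times U_B}(a,b)$ can be made $0$ because, with $U_A$ a genuinely small neighborhood of $A$, \emph{each} orbit is in $U_B$ only an $\varepsilon$-fraction of the time --- and $U_B \times U_B \subset (U_B \times X)$, so the bound is $\le \varepsilon$ regardless of synchronization. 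The point I missed: one does not need $U_B$ to shrink; $\tilde I^T_{U_B\times U_B}(a,b) \le \tilde I^T_{U_B\times X}(a,b) = \tilde I^T_{U_B}(a)$ for the first factor, and this $\to 0$ for a.e.\ $a$ as long as the loop is attracting, by the statistical-attractor computation already in the text. Hence $U_B\times U_B$ is even statistically inessential-looking --- but it is \emph{not} Milnor-inessential for $\varphi$ itself, only for $\Phi$: for $\varphi$ the set $U_B$ is visited infinitely often by a.e.\ orbit so $I^t_{U_B}(x) \not\to 0$; what saves us for $\Phi$ is that the two orbits' visits to $U_B$ must eventually be disjoint in time.

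So the genuine heart of the matter is the same as in Theorem~\ref{thm:stat}: the claim that for distinct orbits $a, b$ winding onto the loop, $I^t_{U_B\times U_B}(a,b) \to 0$ as $t \to \infty$, i.e.\ after some finite time the two orbits are never in $U_B$ simultaneously, OR are in $U_B$ simultaneously only on a time set of vanishing density. The hard part will be making this desynchronization argument clean without invoking Lemma~4 of~\cite{K06} (which was stated for the modified Bowen setting). The mechanism: let $T_a(n)$ and $T_b(n)$ be the times of the $n$-th passage of $a$, resp.\ $b$, through $\Sigma_B$; the $U_B$-sojourn of $a$ around its $n$-th passage is an interval of bounded length $L$ around $T_a(n)$, while the gaps $T_a(n+1) - T_a(n) \sim |\ln u_a(n)| \to \infty$ grow without bound, and likewise for $b$. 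If the two orbits were in $U_B$ together infinitely often with positive density, one could extract a relation forcing $u_a(n)$ and $u_b(m(n))$ to stay comparable along a subsequence, which (via the return-map asymptotics $u \mapsto c u^\lambda$, or the affine-on-$\ln$ description when $\lambda = 1$ is excluded) would force the orbits of $a$ and $b$ to coincide --- contradiction. I would present this as: distinct orbits have passage-time sequences that are "eventually incommensurable" in the sense that $\{t : \varphi^t a \in U_B\} \cap \{t : \varphi^t b \in U_B\}$ has density $0$; combined with the Fubini reduction to distinct orbits and Remark~\ref{rem:proj}, this gives $(B\text{-side point}, B\text{-side point}) \notin A_{Mil}(\Phi)$ but $\in A_{Mil}(\varphi)^2$, completing the proof. (An explicit description $A_{Mil}(\Phi) = (\overline\gamma \times \{A\}) \cup (\{A\} \times \overline\gamma)$, exactly parallel to Proposition~\ref{prop:statstruct}, would follow by the same argument combined with $A_{max}(\Phi) = \overline\gamma \times \overline\gamma$.)
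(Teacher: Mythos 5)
Your overall plan is the same as the paper's: reduce (via Fubini and Remark~\ref{rem:proj}) to a two-orbit statement about eventual desynchronization of visits to $U_B$, then conclude that $U_B\times U_B$ is Milnor-inessential. The paper's version of the statement you want is Proposition~\ref{prop:loop}: for any two points $x,y$ on distinct orbits, there is a $t_0$ after which $\varphi^t(x)$ and $\varphi^t(y)$ are \emph{never} simultaneously in~$U_B$. Your write-up, however, never actually proves this, and contains one genuine error along the way.

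The error is the hedge \emph{``after some finite time the two orbits are never in $U_B$ simultaneously, OR are in $U_B$ simultaneously only on a time set of vanishing density.''} The second alternative does not suffice. Milnor-inessentiality of an open set $U$ means $I^t_U\to 0$ pointwise a.e., i.e.\ the orbit eventually \emph{stops} visiting $U$; visits of vanishing density but infinite multiplicity leave $I^t_U$ oscillating between $0$ and $1$, so the $\omega$-limit set of $(a,b)$ still meets $\overline{U_B\times U_B}$ and you cannot exclude $(B,B)$ from $A_{Mil}(\Phi)$. (Your earlier Ces\`aro computation $\tilde I^T_{U_B\times U_B}\le\tilde I^T_{U_B\times X}\to 0$ only establishes statistical inessentiality, which here is vacuous since $A_{stat}(\varphi)=\{A\}$ already forces $A_{stat}(\Phi)\subset\{(A,A)\}$; you do notice this, but the residue of that digression is exactly this bad ``OR''.) You must prove the strong alternative with no fallback.

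The second gap is that your mechanism for the strong alternative is only sketched and would need to be fleshed out with an argument you don't supply. You assert that if the passage-time windows overlap ``infinitely often with positive density'' one could ``extract a relation forcing $u_a(n)$ and $u_b(m(n))$ to stay comparable'' and hence force the orbits to coincide. Even setting aside that ``positive density'' is again the wrong quantifier (you must rule out \emph{infinitely many} overlaps, not just a positive-density set of them), this heuristic is exactly where the real work lives. The paper establishes it by passing to $\zeta=-\ln\xi$ and proving a two-part lemma: the Poincar\'e map $\Delta$ in that coordinate is eventually expanding with $\Delta'(\zeta)=\nu+o(1)$, and for two points on distinct orbits the ratio $\Delta^n(\zeta)/\Delta^n(\hat\zeta)$ stays bounded away from $1$ for all $n$. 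Combined with the asymptotic $t(\zeta)\sim c_1\zeta$ for the return time, this gives $t_{n,x}-t_{n,y}\to\infty$ and $t_{n+1,y}-t_{n,x}\to\infty$, after which a bounded $U_B$-sojourn time yields eventual disjointness. The ``ratio bounded away from $1$'' part is not automatic from mere expansion and requires the explicit estimate in the lemma; your sketch implicitly assumes a version of it without proving it. So: right skeleton, but the central desynchronization lemma --- the real content of the proof --- is missing, and the weakened ``OR'' must be dropped.
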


This theorem is proved like Theorem~\ref{thm:stat}, but with reference to Lemma~4 of~\cite{K06} replaced by Proposition~\ref{prop:loop} below. If we restrict $\varphi$ to $X$, we will be able to describe the attractor of $\Phi$ explicitly:
\[A_{Mil}(\Phi) = \left(\overline\gamma \times \{A\}\right) \cup \left(\{A\} \times \overline\gamma\right);\]
the proof is as in Proposition~\ref{prop:statstruct}.

\begin{prop}\label{prop:loop}
For a smooth flow $\varphi$ with a generic attracting saddle loop as above, for any two points~$x, y \in X$ that do not belong to the same trajectory, there is a point in time $t_0 > 0$ such that for $t > t_0$ the $\varphi^t$-images of $x, y$ cannot be in the set~$U_B$ together.
\end{prop}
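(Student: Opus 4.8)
The plan is to reduce the statement to a quantitative estimate on the ``asynchrony'' of the passage times through the saddle corner. For a point $z$ that winds onto the loop, its forward orbit makes a sequence of excursions, each consisting of a long near-linear passage through $U_A$ followed by a comparatively short trip through $U_B$. The key feature of a generic saddle loop (characteristic number $\lambda \neq 1$) is that the return map on a transversal, written in a suitable chart, is asymptotically a power map $\xi \mapsto c\,\xi^{\lambda}$ (or has a logarithmic distortion when $\lambda = 1$, but that case is excluded), so the distances to the saddle along consecutive crossings of $\Sigma_A$ shrink super-geometrically (if $\lambda > 1$) in a way that is governed by the single real parameter ``current distance to the saddle''. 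First I would set up the Poincaré return map $P$ on $\Sigma_A$ and the transition time function $\tau(\xi)$ = time for the orbit starting at $\xi \in \Sigma_A$ to make one full loop and return to $\Sigma_A$; the standard asymptotics give $\tau(\xi) \sim -\frac{1}{\lambda_u}\log \xi$ as $\xi \to 0$, where $\lambda_u$ is the positive eigenvalue, and more importantly the time spent inside $U_B$ on each excursion is \emph{bounded}, say by a constant $T_B$, uniformly in $\xi$ small, while the time spent in $U_A$ grows like $-C\log\xi \to \infty$.

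Next I would argue as follows. Suppose, for contradiction, that there is a sequence of times $t_n \to \infty$ at which both $\varphi^{t_n}(x)$ and $\varphi^{t_n}(y)$ lie in $U_B$. Since each visit of an orbit to $U_B$ lasts at most $T_B$ and these visits are separated by gaps that tend to infinity, for $n$ large the interval of ``$x$ is in $U_B$'' containing $t_n$ and the interval of ``$y$ is in $U_B$'' containing $t_n$ overlap, and each has length $\le T_B$; hence the $k$-th crossing time $a_k$ of $x$ through $\Sigma_A$ and the $m$-th crossing time $b_m$ of $y$ satisfy $|a_k - b_m| \le 2T_B$ for the corresponding indices. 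Now track how this closeness propagates: after one more loop, the new crossing times are $a_{k+1} = a_k + \tau(\xi_k)$ and $b_{m+1} = b_m + \tau(\eta_m)$, where $\xi_k, \eta_m$ are the respective coordinates on $\Sigma_A$. The point is that once $|a_k - b_m|$ is bounded and both coordinates are small, the ratio $\xi_k / \eta_m$ is pinned down (because the flight time through $U_B$ together with the nearly-linear dynamics near the saddle determines the entry coordinate from the exit coordinate up to a controlled factor), and then the power-map asymptotics of $P$ force $|a_{k+1} - b_{m+1}|$ to either grow or the coordinates to separate: iterating $\xi \mapsto c\xi^\lambda$ from two nearby but unequal seeds makes the ratio of the logarithms tend to a constant $\neq 1$ if $\lambda \neq 1$, so the accumulated time differences $\tau(\xi_k) - \tau(\eta_m)$ do not stay summable, contradicting boundedness of $|a_k - b_m|$ — unless $\xi_k = \eta_m$ for all large $k$, i.e. $x$ and $y$ are eventually on the same orbit, which is excluded by hypothesis. (If only one of $x, y$ winds onto $\gamma$ while the other does not converge to $\overline\gamma$, the statement is immediate since that other orbit eventually leaves every small neighborhood of the loop and in particular is bounded away from $U_B$ for large time, or enters $U_B$ only during the finitely many initial excursions; and if neither winds onto $\gamma$ there is nothing to prove because $X$ is a small semi-neighborhood of $\overline\gamma$ and every orbit in it that does not escape must wind onto $\gamma$.)

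I would organize the write-up around a single ``divergence of phases'' lemma: if two orbits crossing $\Sigma_A$ with coordinates $\xi, \eta$ (both small, $\xi \neq \eta$) are such that their $U_B$-transit intervals overlap, then after finitely many further loops this overlap is destroyed, and moreover it is never restored. The ``never restored'' part uses monotonicity: the order of the two orbits on $\Sigma_A$ is preserved by $P$, and the difference of accumulated loop times is eventually monotone with a definite sign dictated by whether $\xi > \eta$ and whether $\lambda > 1$ or $\lambda < 1$ (one orbit is consistently ``ahead'' and falls further behind schedule relative to the other). The main obstacle, and the place that needs genuine care rather than soft arguments, is the quantitative step controlling how the ratio $\xi_k/\eta_m$ evolves: one must show that the entry coordinate after a $U_B$-flight is a \emph{Lipschitz} (better: $C^1$ with non-vanishing derivative) function of the exit coordinate, uniformly near the saddle, and then combine this with the precise $\xi \mapsto c\xi^\lambda (1 + o(1))$ asymptotics of the saddle corner map to get that $|\log \xi_k - \log \eta_m|$ grows like $\lambda^k |\log \xi_0 - \log \eta_0|$ (up to bounded multiplicative error), which is exactly what makes $\sum_k |\tau(\xi_k) - \tau(\eta_m)|$ diverge when $\lambda \neq 1$. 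This is the analogue, for the single-loop case, of Lemma~4 of~\cite{K06}, and the genericity hypothesis $\lambda \neq 1$ is used precisely here.
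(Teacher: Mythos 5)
Your proposal follows essentially the same route as the paper: pass to a logarithmic chart $\zeta = -\log\xi$ on the transversal, use the asymptotics $P(\xi) = c\xi^{\nu}(1+o(1))$ and $\tau(\xi) \sim -\text{const}\cdot\log\xi$, and show that once the two $\Sigma_A$-crossing sequences are out of phase, the genericity $\nu>1$ forces the time-phase difference between the two orbits to diverge, so the (uniformly bounded) $U_B$-transit windows stop overlapping. Your ``divergence of phases'' lemma is exactly the Lemma in the paper (expansivity of $\Delta$ in the log coordinate, plus the ratio $\Delta^n(\zeta)/\Delta^n(\hat\zeta)$ staying bounded away from $1$), and your handling of the $U_B$-transition is what the paper achieves by normalizing coordinates so the $\Sigma_B\to\Sigma_A$ map is the identity; the only cosmetic difference is that you phrase it as a proof by contradiction rather than directly establishing $t_{n,x}-t_{n,y}\to\infty$ and $t_{n+1,y}-t_{n,x}\to\infty$.
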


\begin{proof}
For any smooth coordinates on~$\Sigma_A, \, \Sigma_B$ that have zeros on the separatrices, the monodromy map\footnote{The subscript $s$ in $\Delta_s$ stands for `saddle'. Below we will also consider a similar map $\Delta_{sn}$ for a saddle-node.}~$\Delta_s$ from $\Sigma_A$ to $\Sigma_B$ has the form $\xi \mapsto c\cdot \xi^\nu(1 + o(1))$ as $\xi \to 0$, where $\nu$ is the characteristic number of the saddle, and the time it takes to pass from $\Sigma_A$ to $\Sigma_B$ is $\tau(\xi) = {- c_1\cdot\log\xi (1 + o(1))}$, see Lemma~13.1 of~\cite{SSTC} and Lemma~1 of~\cite{K06}. The freedom in choosing the coordinates on the transversals allows us to arrange that the monodromy from $\Sigma_B$ to $\Sigma_A$ be $x \mapsto x$ and so the total Poincar\'e map $P$ coincide, in coordinate representation, with the monodromy map~$\Delta_s$.

Let us denote by $\Delta$ the Poincar\'e map~$P$ rewritten in a new coordinate~$\zeta = -\ln\xi$.

\begin{lem}
The map $\Delta$ has two important properties:

i) it is expanding for large~$\zeta$; 

ii) given any two points~$\zeta > \hat{\zeta} > 0$, the ratio of their iterates stays bounded away from 1.
\end{lem}

\begin{proof}

We start with proving that the map is expanding. Since
\[\Delta(\zeta) = -\ln(P(e^{-\zeta})),\]
for its derivative we have
\[\Delta'(\zeta) = \frac{P'(e^{-\zeta}) \cdot e^{-\zeta}}{P(e^{-\zeta})}.\]
For a monodromy map~$\Delta_s$, one has $\lim_{x \to 0} \frac{x\Delta'_s(x)}{\Delta_s(x)} = \nu$, where $\nu$ is the characteristic number.\footnote{Indeed, there exist $C^1$-smooth linearizing coordinates in a neighborhood of the saddle (see~\cite{N}) that induce charts on the transversals in which the monodromy map is just $x \mapsto x^\nu$, and the claim holds. Then it is straightforward to check that a $C^1$ change of coordinate (that preserves the origin) on any of the two transversals does not change the limit.} This yields that $\frac{e^{-\zeta} P'(e^{-\zeta})}{P(e^{-\zeta})} = \nu+o(1)$ as $\zeta \to +\infty$, and hence
\[\Delta'(\zeta) = \nu+o(1).\]
We assumed that the loop is attracting and the saddle is generic, so~$\nu > 1$, and hence the map~$\Delta$ is expanding for large~$\zeta$.

Now let us establish the second property. The asymptotic formula for~$P$ provides the following asymptotic formula for $\Delta$:
\[\Delta(\zeta) = \nu\zeta - \ln c + o(1), \text{ as } \zeta \to +\infty.\]
Let $\zeta$ be greater than $\hat{\zeta}$ and let both be so large that $o(1)$ in the formula above is smaller than $1$.
Then we set $C = |\ln c| + 1$ and write
\[\frac{\Delta(\zeta)}{\Delta(\hat{\zeta})} = \frac{\nu\zeta - \ln c + o(1)}{\nu\hat{\zeta} - \ln c + o(1)} > \frac{\nu\zeta - \nu(|\ln c| + 1)}{\nu\hat{\zeta} + \nu(|\ln c| + 1)} = \frac{\zeta - C}{\hat{\zeta} + C}.\]

First assume that we have $\frac{\zeta}{\hat{\zeta}} < 2$. Then, as it is easy to see, $\frac{\zeta + \hat{\zeta}}{\hat\zeta(\hat\zeta + C)} < 3/\hat\zeta$ and we can write
\[\frac{\Delta(\zeta)}{\Delta(\hat{\zeta})}  > \frac{\zeta - C}{\hat{\zeta} + C} = \frac{\zeta}{\hat{\zeta}} - \frac{C(\zeta + \hat{\zeta})}{\hat\zeta(\hat\zeta + C)} > \frac{\zeta}{\hat{\zeta}} - 3C/\hat\zeta.\]

This inductively yields that 
\[\frac{\Delta^n(\zeta)}{\Delta^n(\hat{\zeta})} > \frac{\zeta}{\hat{\zeta}} - 3C \sum_{k=0}^{n-1}\frac{1}{\Delta^{k}(\hat\zeta)}.\]

The asymptotic formula above shows that $\frac{1}{\Delta^{k}(\hat\zeta)}$ decreases exponentially as $k \to +\infty$, so we have, for any $n$,
\[\sum_{k=0}^{n-1}\frac{1}{\Delta^{k}(\hat\zeta)} < \sum_{k=0}^{+\infty}\frac{1}{\Delta^{k}(\hat\zeta)} < \frac{const}{\hat{\zeta}},\]
and thus
\[\frac{\Delta^n(\zeta)}{\Delta^n(\hat{\zeta})} > \frac{\zeta}{\hat{\zeta}} -\frac{const}{\hat\zeta} = 1 + \frac{(\zeta - \hat\zeta) - const}{\hat\zeta}.\]
Since the map $\Delta$ is expanding, we can replace $\zeta, \hat{\zeta}$ by their iterates, if necessary, and assume that the numerator $(\zeta - \hat\zeta) - const$ is positive and we have the required estimate.

Now consider the case $\frac{\zeta}{\hat{\zeta}} \geq 2$. Here we can write
\[\frac{\Delta(\zeta)}{\Delta(\hat{\zeta})}  > \frac{\zeta - C}{\hat{\zeta} + C}  = \frac{\zeta}{\hat{\zeta}} \cdot \frac{1 - C/\zeta}{1 + C/\hat{\zeta}} \geq \frac{\zeta}{\hat{\zeta}} \cdot \left(1 - 2C/\hat{\zeta} \right),\]

and, inductively,
\[\frac{\Delta^n(\zeta)}{\Delta^n(\hat{\zeta})} > \frac{\zeta}{\hat{\zeta}} \cdot \prod_{k = 0}^{n-1} (1 - 2C/\Delta^k(\hat{\zeta})) = \frac{\zeta}{\hat{\zeta}} \cdot \exp\left(\sum_{k=0}^{n-1} \ln(1 - 2C/\Delta^k(\hat{\zeta}))\right).\]
For small positive $x$ we have $\ln(1 - x) > -2x$, so the sum of logarithms is greater than $-4C \cdot \sum_{k=0}^{+\infty}\frac{1}{\Delta^{k}(\hat\zeta)} > -\frac{const}{\hat{\zeta}}$, and so the exponent is greater than $\exp(-\frac{const}{\hat{\zeta}})$, and the latter is close to one when $\hat{\zeta}$ is sufficiently large. Thus, since we assumed that~$\frac{\zeta}{\hat{\zeta}} \geq 2$, we have that the ratios $\frac{\Delta^n(\zeta)}{\Delta^n(\hat{\zeta})}$ are separated from one, and we are done. 
\end{proof}

Consider the positive semi-trajectories of $x, y$ and let $x_1, y_1 \in \Sigma_A$ be two points of these semi-trajectories such that the point~$y_1$ is in the segment of~$\Sigma_A$ between~$x_1$ and its Poincar{\'e}-map preimage $P^{-1}(x_1)$.

Let $t_{n, x_1}$ and $t_{n, y_1}$ be the moments of time $t$ when the $\varphi^t$-images of~$x_1$ and~$y_1$ cross the transversal~$\Sigma_B$ for the~$n$-th time. Our goal is to show that $t_{n, x_1} - t_{n, y_1} \to +\infty$ and $t_{n+1, y_1} - t_{n, x_1} \to +\infty$ as $n \to +\infty$.

Given $z \in \Sigma_A$, denote the $\zeta$-coordinate of $P^n(z)$ by~$\zeta_n(z)$, and given $\zeta$, denote by~$t(\zeta)$ the time it takes for a point of~$\Sigma_A$ with coordinate~$\zeta$ to get to~$\Sigma_B$, i.e., the time associated with the monodromy~$\Delta_s$, discussed above: $t(\zeta) = c_1\zeta \cdot (1 + o(1))$ as $\zeta \to +\infty$. The time of one rotation around the loop has the same asymptotic. The ratios $\zeta_n(x_1)/\zeta_n(y_1)$ and $\zeta_n(y_1)/\zeta_{n-1}(x_1)$ are bounded away from 1, by the second property of~$\Delta$. Therefore, we may assume, replacing $x_1, y_1$ by their images under the iterates of~$P$, if necessary, that the point~$x_1$ makes winds slower than~$y_1$, but faster than $P(y_1)$, and so the sequences $t_{n, x_1}, t_{n, y_1}$ are ordered in the following way:
$$t_{1, y_1} < t_{1, x_1} < t_{2, y_1} < t_{2, x_1} < \dots < t_{n, y_1} < t_{n, x_1} < t_{n+1, y_1}< \dots$$

This assumption also means that, when the $\varphi^t$-image of $y_1$ has made $n-1$ rotations around the loop and returned to~$\Sigma_A$, the image of $x_1$ is not yet at $\Sigma_A$, so we have, when $n\to +\infty$,
\[t_{n, x_1} - t_{n, y_1} >  t(\zeta_{n-1}(x_1)) - t(\zeta_{n-1}(y_1)).\]

Let us consider the ratio of times from the right hand side. We have
\[\frac{t(\zeta_{n-1}(x_1))}{t(\zeta_{n-1}(y_1))} = \frac{\zeta_{n-1}(x_1)}{\zeta_{n-1}(y_1)} \cdot (1 +o(1)) \text{ as } n \to +\infty.\]
Since the ratio~$\frac{\zeta_{n-1}(x_1)}{\zeta_{n-1}(y_1)}$ is bounded away from 1, the same holds for the ratio of times when $n$ is large, but then the difference of these times also tends to infinity as $n \to +\infty$. Hence, we have 
\[t_{n, x_1} - t_{n, y_1} >  t(\zeta_{n-1}(x_1)) - t(\zeta_{n-1}(y_1)) \to +\infty \text{ as } n \to +\infty.\]
Analogously, when the image of $x_1$ arrives at $\Sigma_A$ after $n-1$ rotations, the image of~$y_1$ has not finished its $n$-th rotation yet, so we have
\[t_{n+1, y_1} - t_{n, x_1} >  t(\zeta_{n}(y_1)) - t(\zeta_{n-1}(x_1)) \to +\infty.\]

The time needed to cross the domain~$U_B$ is bounded from above by some constant~$K$ independent of the orbit. Since $t_{n, y_1} < t_{n, x_1} < t_{n+1, y_1}$, the images of $x_1, y_1$ can be in~$U_B$ simultaneously while $x_1$ makes its $n$-th wind only if $t_{n, x_1} - t_{n, y_1} < K$ or if $t_{n+1, y_1} - t_{n, x_1} < K$, which can happen only for finitely many~$n$ because of the divergence to infinity established above. Hence, there is a moment of time after which the $\varphi^t$-images of the points~$x_1, y_1$ never visit~$U_B$ together. Since the sequences $t_{n, x}$ and $t_{n, y}$ defined analogously for the original two points $x, y$ differ from $t_{n, x_1}$ and $t_{n, y_1}$ by two fixed constants, the same conclusion holds for~$x, y$.
This proves the proposition.
\end{proof}

\section{Statistical attractors for products of flows of codimension at most~2}\label{sec:lune}

In this section we prove that an example of non-coincidence for statistical attractors like the one from Section~\ref{sec:bowen} cannot be obtained by taking a product of two flows of codimension at most two, at least on the sphere.

\begin{thm}\label{thm:codim2}
For any two smooth flows on $S^2$ of codimension at most 2 (i.e., flows that may appear in generic two-parameter families), the statistical attractor of their product coincides with the product of their statistical attractors.
\end{thm}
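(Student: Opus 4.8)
The plan is to classify, up to the relevant features, what the statistical attractor of a generic codimension-$\le 2$ flow on $S^2$ can look like, and then check case by case that no obstruction to the product formula arises. By Remark~\ref{rem:proj} we always have $A_{stat}(\varphi\times\psi)\subset A_{stat}(\varphi)\times A_{stat}(\psi)$, so the whole content is the reverse inclusion. The key structural input is that for a generic $k$-parameter family of flows on $S^2$ (here $k\le 2$) the nonwandering set is made of finitely many hyperbolic or mildly degenerate singular points and periodic orbits together with finitely many separatrix connections; the degeneracies that can occur in codimension $\le 2$ are: a saddle-node (of order $2$ or $3$) singularity, an Andronov--Hopf weak focus (of order $1$ or $2$), a semistable (parabolic) cycle, a cusp singularity, and codimension-$1$ and -$2$ combinations of separatrix connections among saddles, saddle-nodes and cusps (including the modified Bowen-type biangle and loops). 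In every case the statistical attractor $A_{stat}(\varphi)$ is a \emph{finite} union of singular points, periodic orbits, and at most finitely many polycycles; more importantly, I claim that $A_{stat}(\varphi)$ is always the \emph{closure of a finite union of its ``statistically essential'' pieces}, where each essential piece is either (a) a singular point or periodic orbit that attracts a set of positive Lebesgue measure with a genuine physical/natural measure supported on it, or (b) a single singular point lying on an attracting polycycle (the ``winning'' vertex, as the saddle-node $A$ in the modified Bowen example or the saddle $A$ of an attracting loop).

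First I would carry out this classification and, for each degeneracy, identify the statistical attractor restricted to the relevant trapping region, citing the local normal forms (Takens, Il'yashenko) exactly as in Sections~\ref{sec:bowen}--\ref{sec:loop} for the saddle-node and saddle cases, and the analogous elementary computations for the Hopf focus, the parabolic cycle, and the cusp. Second, I would reduce the product $A_{stat}(\varphi)\times A_{stat}(\psi)$ to a finite union of products of essential pieces, and show that it suffices to prove: for each essential piece $P$ of $\varphi$ and $Q$ of $\psi$, the product $P\times Q$ is contained in $A_{stat}(\varphi\times\psi)$. Third, I would treat the products pair by pair. When $P$ and $Q$ are both of type (a)—carrying physical measures $\nu_P,\nu_Q$ supported on positive-measure basins—the product $\nu_P\times\nu_Q$ is a physical measure for $\varphi\times\psi$ (the Birkhoff averages along $(x,y)$ converge for $(x,y)$ in the product of the basins, because convergence along each coordinate plus joint averaging of continuous test functions $f\otimes g$ gives it; this is essentially Theorem~1.1 of~\cite{AF}), hence $P\times Q=\operatorname{supp}(\nu_P\times\nu_Q)\subset A_{stat}(\varphi\times\psi)$. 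When $P$ is a point or cycle of type (a) and $Q$ is the winning vertex of an attracting polycycle $\Gamma$ of $\psi$, the key observation is that a generic orbit of $\psi$ spends asymptotic frequency $1$ near $Q$; combined with convergence of time averages along the $\varphi$-factor, one gets that for a positive-measure set of $(x,y)$ the time averages converge to $\nu_P\times\delta_Q$, so again $P\times\{Q\}\subset A_{stat}(\varphi\times\psi)$. The genuinely delicate case is when both factors have attracting polycycles $\Gamma,\tilde\Gamma$ with winning vertices $A,\tilde A$: here one must show $(A,\tilde A)\in A_{stat}(\varphi\times\psi)$. This is exactly the opposite of the phenomenon in Theorem~\ref{thm:stat}, and it holds because near the \emph{vertices} (not the full polycycles) the synchronization argument of~\cite{K06} does \emph{not} prevent two orbits from being simultaneously near $A$ and near $\tilde A$: a generic orbit of $\varphi$ has frequency $1$ near $A$ and a generic orbit of $\psi$ has frequency $1$ near $\tilde A$, and two events each of asymptotic frequency $1$ have intersection of frequency $1$, so the product orbit has frequency $1$ in any neighbourhood of $(A,\tilde A)$; hence $(A,\tilde A)\in A_{stat}(\varphi\times\psi)$. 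I expect the main obstacle to be the bookkeeping of the finitely many codimension-$2$ configurations—verifying in each one that the statistical attractor really is the closure of the essential pieces as described (in particular that no ``bad'' biangle-type configuration arises that would make the vertex-frequency claim fail), and that the list of degeneracies is genuinely exhausted on $S^2$; the dynamical heart of the argument, by contrast, is the elementary ``two frequency-$1$ events intersect in frequency $1$'' observation, which is what fails for the full polycycle (where the relevant frequencies near $B$ are bounded away from $1$) but succeeds for the vertex.
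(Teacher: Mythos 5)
There is a genuine gap, and it is located precisely where you identify the delicate case and then dismiss it too quickly. Your classification claim (b) is incorrect: for an attracting biangle (two \emph{hyperbolic} saddles with two separatrix connections), which is the codimension-$2$ polycycle, the statistical attractor restricted to the monodromic semi-neighborhood is the \emph{two-point} set $\{A, B\}$ of both saddles (this follows from the main result of~\cite{GK07}, as noted in the proof of Proposition~\ref{prop:biangle}), and a generic orbit does \emph{not} spend asymptotic frequency $1$ near either one of them. The frequency-$1$ picture you describe is correct for a saddle \emph{loop} (one saddle, codimension $1$) and for the winning vertex of the modified Bowen example's \emph{minimal} attractor, but for the statistical attractor of a hyperbolic biangle the fraction of time spent near $A$ (resp.\ $B$) oscillates and stays bounded away from $0$ and $1$. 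You may have conflated the minimal and statistical attractors; they coincide for a saddle loop but differ for the modified Bowen example, and neither of those is the relevant object here.

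Because of this, the ``two frequency-$1$ events intersect in frequency $1$'' argument does not touch the actual hard case: showing that \emph{all four} points $(A,\tilde A), (A,\tilde B), (B,\tilde A), (B,\tilde B)$ lie in $A_{stat}(\varphi\times\tilde\varphi)$ when both factors carry attracting biangles. This is where the paper's real technical content is. Proposition~\ref{prop:biangle} reparametrizes time logarithmically, uses the asymptotic $T_{k,A}(z)=\Lambda^k(\gamma(z)+o(1))$ from Lemma~2 of~\cite{GK07} to turn the crossing times into near-arithmetic sequences, assigns colors to the resulting partitions of the $\tau$-axis, and then shows that for each of the four color pairs there is a positive-measure set of pairs $(z,\tilde z)$ yielding infinitely many long intersections — splitting further into the cases where $\log_\Lambda\tilde\Lambda$ is irrational (equidistribution of a circle rotation) and rational (an explicit choice of $\tilde z$ plus Fubini). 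None of this is captured by a soft frequency argument; indeed, the fact that the same type of reparametrization argument \emph{fails} to keep the product attractor large in the modified Bowen case of Theorem~\ref{thm:stat} is exactly why the biangle-times-biangle case requires care. Your outline also needs to be checked against the secondary cases the paper treats (cycle $\times$ cycle, cycle $\times$ biangle), but the biangle $\times$ biangle case is the missing idea.
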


\subsubsection*{There are finitely many singularities, cycles, and polycycles}
The topological classification of singularities that appear in vector fields of codimension at most 2 is known; those are (not necessarily hyperbolic) sinks, sources, saddles, contracting and dispersing saddle-nodes of order two and cusps (see, e.g.,~\cite{KS} and references therein). Singularities of these types are isolated and the sphere is compact, so there are finitely many singularities. Also, such singularities obviously have neighborhoods that contain no cycles. Hence, cycles, if there were infinitely many of them, could only accumulate to a cycle or a polycycle. The first possibility is ruled out by the observation that degenerate cycles in fields of codimension at most two can only have eigenvalue one and, maybe, another single degeneracy in the nonlinear part of the Poincar\'e map (see~\cite[Ch.~2, S.~1-2]{AAIS}). The second is ruled out by the results of~\cite{T96} where it was shown that, for any polycycle that can appear in a generic 3-parameter family of vector fields, at most 3 cycles may appear near this polycycle, even after a small perturbation of the field.

\subsubsection*{Essential limit points}

By the Poincar\'e--Bendixson theorem, for the vector field in the sphere with finitely many singularities the $\omega$-limit set of a point is either a cycle, a singularity, or a polycycle.
Since in our case there are only finitely many of those, the union of all $\omega$-limit sets is closed, and, therefore, the statistical attractor can only include trajectories that are in some $\omega$-limit sets. Obviously, sinks and contracting saddle-nodes belong to the statistical attractor and sources do not. A polycycle can be the $\omega$-limit set of a point only if this polycycle is monodromic and the orbit of this point winds onto it. As the orbit does so, the fraction of time that the orbit spends near the union of the polycycle's singularities tends to one, so only singularities of the polycycle can belong to~$A_{stat}$.

A cycle of our vector field is isolated, so it can be either repelling, or attracting, or semi-stable. In the latter two cases it is a part of~$A_{stat}$: it attracts a set of points that has positive measure, so it must contain a point of the attractor, but the attractor consists of whole trajectories. The upshot is that if a cycle is in the attractor, it has at least one semi-neighborhood where it attracts every point. 

\subsubsection*{The products of elements are in the attractor}

Statistical attractors of flows of codimension at most 2 consist of elements of the following three types: sinks or contracting saddle-nodes\footnote{In particular, if a vector field has a contracting saddle-node with a homoclinic curve, then the saddle-node belongs to the statistical attractor, but the points of the homoclinic curve do not, and our argument will not distinguish this case from the case where there is no homoclinic curve.}, cycles, sets of singularities of monodromic polycycles. Though the first and third types intersect, it should not concern us. In order to prove Theorem~\ref{thm:codim2}, it is sufficient to prove that the product of elements of these types is a part of $A_{stat}$ for the product of flows.

Let $E_1$ be such an element for the flow~$\varphi$ and $E_2$ be such an element for~$\tilde{\varphi}$. It is not difficult to see that if any of the elements~$E_1, E_2$ is a one-point set, the product~$E_1 \times E_2$ is a subset of $A_{stat}(\varphi \times \tilde{\varphi})$.

The first case when both~$E_1$ and~$E_2$ have more than one point is when they are both cycles.
Let $E_1$ and $E_2$ be the cycles of~$\varphi$ and~$\tilde{\varphi}$ and $U_1, \, U_2$ be their semi-neighborhoods where the points are attracted to the cycles.
The torus $\TT = E_1 \times E_2$ attracts the set $U_1 \times U_2$ of positive product measure and therefore contains a point of the statistical attractor. Let this point be~$x = (a,b)$ and let~$U$ be its arbitrary neighborhood. There is a set~$S$ of positive product measure that indicates that $U$ is not statistically inessential: this set consists of points $z$ such that
\[\limsup_{T \to +\infty}\frac{1}{T}\int_0^T I_U((\varphi\times\tilde\varphi)^t(z))dt > 0.\]
If such a set had zero measure, $U$ would be inessential. 

Fix an arbitrary~$t \in \RR$ and define $\hat S$ as the image of~$S$ under the map $(u, v) \mapsto (\varphi^t(u), v)$. This map is a diffeomorphism, and therefore, since~$S$ has positive measure, $\hat S$ is also of positive measure. But~$\hat S$ indicates that an open set $\hat{U} \ni (\varphi^t (a), b)$ is not inessential. Since $U$ was arbitrary, we argue that any neighborhood~$\hat{U}$ of $(\varphi^t (a), b)$ is not inessential, which yields that $(\varphi^t (a), b)$ is in~$A_{stat}$. Generalizing this argument, we get that $(\varphi^{t_1} (a), \tilde{\varphi}^{t_2}(b))\in A_{stat}(\varphi \times \tilde{\varphi})$ for arbitrary~$ t_1, t_2$ and hence the whole torus $E_1 \times E_2$ is in the attractor.

The second case is when $E_1$ is a cycle and~$E_2$ is a set of singularities of a monodromic polycycle with more than one singularity, or vice versa. There is only one such polycycle in the list (see \cite{KS} and~\cite{T96}) for codimension~$\le 2$, namely, the attracting biangle: two hyperbolic singularities with two separatrix connections arranged so that there is a monodromic semi-neighborhood where points are attracted to the polycycle.

The key idea here is that for the point in the semi-neighborhood of the cycle~$E_1$ each rotation around~$E_1$ takes bounded time asymptotically equal to the period of the cycle, while for the biangle~$E_2$ the times of rotations diverge to infinity. The product $E_1 \times E_2$ is a union of two circles, so one can modify the argument from the previous case to first show that each circle has a point of the attractor and then argue that the intersection of $E_1 \times E_2$ with the attractor is invariant under the maps of the form $(u, v) \mapsto (\varphi^t(u), v)$.

Finally, the third case is the product of (the sets of singularities of) two attracting biangles, and it is the only nontrivial case. We solve it in the following proposition.

\begin{prop}\label{prop:biangle}
Consider two smooth flows $\varphi$ and $\tilde{\varphi}$ on~$S^2$, each with an attracting biangle, that is, a monodromic polycycle with attracting semi-neighborhood formed by two separatrix connections between two hyperbolic saddles. Denote the biangle of~$\varphi$ by~$\Gamma$, its two hyperbolic saddles by~$A, B$, and denote a monodromic semi-neighborhood of the polycycle where all orbits are attracted to it by~$X$; use the same notation with tilde for~$\tilde{\varphi}$. Suppose that for both polycycles the product of characteristic numbers of the two saddles is greater than~$1$. Then, if~$\varphi$ and~$\tilde{\varphi}$ are considered as dynamical systems with phase spaces~$X$ and $\tilde{X}$ and we set~$\Phi = \varphi \times \tilde{\varphi}$, the statistical attractor of~$\Phi$ consists of four points:
\[A_{stat}(\Phi) = \{(A, \tilde{A}), (A, \tilde{B}), (B, \tilde{A}), (B, \tilde{B})\}.\]
\end{prop}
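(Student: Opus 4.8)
The plan is to show that $A_{stat}(\Phi)$ contains each of the four points; the opposite inclusion $A_{stat}(\Phi)\subseteq\{A,B\}\times\{\tilde A,\tilde B\}$ comes for free, since by Remark~\ref{rem:proj} the two coordinate projections of $A_{stat}(\Phi)$ are $A_{stat}(\varphi)=\{A,B\}$ and $A_{stat}(\tilde\varphi)=\{\tilde A,\tilde B\}$ (these identifications were obtained in the discussion preceding this proposition). By the symmetry between the two saddles of each biangle it is enough to handle the point $(A,\tilde A)$; shrinking the transversals if necessary, it suffices to prove that the open set $W:=U_A\times\tilde U_{\tilde A}$ is not statistically inessential for $\Phi$, i.e.\ that $\{(x,y):\limsup_{T\to+\infty}\tilde I^T_W(x,y)>0\}$ has positive $(\mu\times\tilde\mu)$-measure.

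First I would record the local picture near $\Gamma$, exactly as in Section~\ref{sec:loop} (Lemma~13.1 of~\cite{SSTC}, Lemma~1 of~\cite{K06}): in suitable coordinates on $\Sigma_A$ and the logarithmic chart $\zeta=-\ln\xi$, the Poincar\'e return map to $\Sigma_A$ is $\Delta(\zeta)=\nu\zeta+c+o(1)$ with $\nu=\nu_A\nu_B>1$, so $\zeta_n:=\Delta^n(\zeta_0)\sim\hat\zeta_0\,\nu^n$ for a positive "scaling limit" $\hat\zeta_0$; one turn around $\Gamma$ takes time $c_1\zeta+o(\zeta)$ with $c_1=c_{1,A}+c_{1,B}\nu_A$, of which the passage through $U_A$ takes $c_{1,A}\zeta+o(\zeta)$ and comes first and the passage through $U_B$ takes $c_{1,B}\nu_A\zeta+o(\zeta)$, the remaining transit times being bounded. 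Hence for $\mu$-a.e.\ $x$ and all large $t$ one has $\varphi^t(x)\in U_A$ exactly when $t$ lies in the $n$-th \emph{$A$-window} $[T_n,\,T_n+c_{1,A}\zeta_n+o(\zeta_n)]$ with $T_n\sim\tfrac{c_1}{\nu-1}\zeta_n$. Passing to $u=\ln t$ one gets $\tilde I^T_W(x,y)=e^{-U}\int_{-\infty}^{U}e^{u}g_x(u)g_y(u)\,du$, where $U=\ln T$, $g_x(u)=I_{U_A}(\varphi^{e^u}x)$, $g_y(u)=I_{\tilde U_{\tilde A}}(\tilde\varphi^{e^u}y)$, and in the variable $u$ the $A$-windows become, up to an $o(1)$ error in endpoints and length, a periodic family: $g_x=1$ on $\bigcup_n[\,a_0(x)+n\ln\nu,\;a_0(x)+n\ln\nu+\delta_A\,]$ with window length $\delta_A=\ln\!\bigl(1+\tfrac{c_{1,A}(\nu-1)}{c_1}\bigr)\in(0,\ln\nu)$ and phase $a_0(x)=\ln\hat\zeta_0(x)+\ln\tfrac{c_1}{\nu-1}$, a quantity depending only on the orbit of $x$. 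Likewise $g_y=1$ on a periodic family of windows of length $\tilde\delta_{\tilde A}$, period $\ln\tilde\nu$ and phase $b_0(y)$.

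The elementary fact that drives everything is that if $g_x\equiv g_y\equiv1$ on $[U-\delta,U]$ then $\tilde I^{e^U}_W(x,y)\ge e^{-U}\int_{U-\delta}^{U}e^u\,du=1-e^{-\delta}$. Fix $\delta$ with $0<\delta<\min(\delta_A,\tilde\delta_{\tilde A})$, small enough to absorb the $o(1)$ corrections. Then $[U-\delta,U]$ sits inside an $A$-window of $x$ and inside an $\tilde A$-window of $y$ whenever the point $\rho(U):=\bigl(U\bmod\ln\nu,\;U\bmod\ln\tilde\nu\bigr)$ of the torus $\mathbb{T}:=(\RR/(\ln\nu)\ZZ)\times(\RR/(\ln\tilde\nu)\ZZ)$ lies in the box $Q(x,y):=\bigl(a_0(x)+[\delta,\delta_A]\bigr)\times\bigl(b_0(y)+[\delta,\tilde\delta_{\tilde A}]\bigr)$, which has positive area. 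So it is enough to exhibit a positive-measure set of $(x,y)$ for which $\rho(U)\in Q(x,y)$ for arbitrarily large $U$. If $\ln\nu/\ln\tilde\nu\notin\QQ$, the line $u\mapsto\rho(u)$ is equidistributed on $\mathbb{T}$, so $\rho(U)\in Q(x,y)$ for a set of $U$ of positive density — for \emph{every} such $(x,y)$, hence $W$ is essential with respect to a full-measure set. If $\ln\nu/\ln\tilde\nu\in\QQ$, the curve $u\mapsto\rho(u)$ is periodic with image a single closed geodesic $\mathcal{L}\subset\mathbb{T}$; then $\rho(U)\in Q(x,y)$ for arbitrarily large $U$ iff $\mathcal{L}\cap Q(x,y)\ne\emptyset$ iff $(a_0(x),b_0(y))$ lies in $\mathcal{L}-\bigl([\delta,\delta_A]\times[\delta,\tilde\delta_{\tilde A}]\bigr)$, a set of positive Lebesgue measure on $\mathbb{T}$ (a Minkowski difference of a slope-one geodesic and a box of positive area). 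To finish I would check that the push-forward of $\mu\times\tilde\mu$ under $(x,y)\mapsto(a_0(x),b_0(y))\in\mathbb{T}$ gives positive mass to this set: $a_0$ is the composition of the smooth submersion "first-hit coordinate" $\zeta_0\colon X\setminus\Gamma\to(\zeta_{\min},+\infty)$ on $\Sigma_A$, the Koenigs linearizing diffeomorphism of $\Delta$ (conjugating $\Delta$ to $\zeta\mapsto\nu\zeta$, so that $\hat\zeta_0$ is a smooth increasing function of $\zeta_0$), and $\ln$, followed by reduction modulo $\ln\nu$; hence the push-forward of $\mu$ under $x\mapsto a_0(x)\bmod\ln\nu$ is absolutely continuous with a.e.\ positive density on $\RR/(\ln\nu)\ZZ$, similarly for $b_0$, and since these depend on $x$ and on $y$ separately the push-forward of $\mu\times\tilde\mu$ on $\mathbb{T}$ is absolutely continuous with a.e.\ positive density and therefore charges every positive-measure set. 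Either way $(A,\tilde A)\in A_{stat}(\Phi)$.

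The other three corners are treated verbatim, replacing the $A$-windows of $x$ by its $B$-windows (again a periodic family in $u$, of length $\ln\nu-\delta_A$ and phase $a_0(x)+\delta_A$) and/or the $\tilde A$-windows of $y$ by its $\tilde B$-windows; nothing in the argument distinguishes the two saddles of a biangle. Combined with the inclusion $A_{stat}(\Phi)\subseteq\{A,B\}\times\{\tilde A,\tilde B\}$ noted at the outset, this yields $A_{stat}(\Phi)=\{(A,\tilde A),(A,\tilde B),(B,\tilde A),(B,\tilde B)\}$. I expect the genuine difficulties to be technical rather than conceptual: carefully tracking the $o(1)$ corrections so that the modular condition truly forces $[U-\delta,U]$ into a window, and establishing the absolute continuity with positive density of the push-forward of the phase offsets $a_0,b_0$ — which rests on the smoothness of the first-hit map and of the Koenigs linearization of $\Delta$. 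The hypothesis that both products of characteristic numbers exceed $1$ enters exactly here: it makes both return maps expanding in the logarithmic chart, producing the exponential growth $\zeta_n\sim\hat\zeta_0\nu^n$ and hence the clean $\ln t$-periodic window structure.
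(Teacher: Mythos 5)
Your approach is essentially the paper's: both pass to logarithmic time $\ln t$, observe that the visits to $U_A$ and $U_B$ become (up to $o(1)$) periodic with periods $\ln\nu$ and $\ln\tilde\nu$, and split into the irrational case, handled by equidistribution of the diagonal translation on a torus, and the rational case, handled by aligning phases over a positive-measure set of pairs $(z,\tilde z)$. Your torus-and-Minkowski-difference packaging is a compact restatement of what the paper does by coloring the log-time axis with white/black and blue/red intervals; the content is the same.

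The one substantive gap is in your rational case. You assert that the scaling limit $\hat\zeta_0$ (your ``Koenigs linearizer'' of the expanding map $\Delta(\zeta)=\nu\zeta+c+o(1)$ at infinity) is a \emph{smooth} increasing function of the transversal coordinate, and from this deduce that the pushforward of $\mu\times\tilde\mu$ under $(x,y)\mapsto(a_0(x),b_0(y))$ is absolutely continuous with a.e.\ positive density on the torus. This is not justified: the corresponding scaling function in~\cite{GK07} --- the $\gamma$ from the paper's quotation of Lemma~2 --- is asserted only to be \emph{continuous}, and for $C^\infty$ but non-analytic flows the smoothness of such a linearizer at infinity is a genuinely extra claim you would have to prove. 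Fortunately, you do not need it. The paper's rational-case argument uses only continuity of $\gamma$ together with $\tilde\gamma(\tilde z)\to+\infty$: for each fixed $z$, one can find a $\tilde z$ with $\log_\Lambda\tilde\gamma(\tilde z)-\log_\Lambda\gamma(z)\in\ZZ$, continuity then gives a whole \emph{interval} of such $\tilde z$ (accepting small phase perturbations, which still produce huge overlaps), and Fubini gives a positive-measure set of pairs. Replacing your absolute-continuity claim by this weaker continuity-plus-Fubini step closes the gap; everything else in your proposal goes through.
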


\begin{proof}

Let the monodromic semi-neighborhood\footnote{We consider the closed semi-neighborhood that includes the polycycle and, in particular, the two saddles.}~$X$ of the biangle be split into domains $U_A \ni A, \; U_B \ni B$ by the transversals $\Sigma_A, \Sigma_B$ to the two separatrix connections. The transversal~$\Sigma_A$ is the one through which trajectories in~$X$ enter~$U_A$. We use the same notation with tilde for the analogous objects of~$\tilde{\varphi}$.

The statistical attractor of~$\varphi$ is a two-point set~$\{A, B\}$: the main result of~\cite{GK07} implies that the minimal attractor coincides with this set, and the statistical attractor cannot be smaller. Since
\[A_{stat}(\Phi) \subset A_{stat}(\varphi) \times A_{stat}(\tilde{\varphi}) = \{A, B\} \times \{\tilde{A}, \tilde{B}\},\]
it suffices to prove that each of the four domains $U_A \times U_{\tilde{A}}, \; U_A \times U_{\tilde{B}}, \; U_B \times U_{\tilde{A}}, \; U_B \times U_{\tilde{B}}$ is not statistically inessential for the flow~$\Phi$.

Consider the flow $\varphi$ first.
Let the eigenvalues of $A$ and $B$ be $-\mu_A, \lambda_A$ and $-\mu_B, \lambda_B$ respectively. For an arbitrary point $z \in \Sigma_A\setminus\Gamma$, denote by $T_{k, A}(z)$ the moment of time when the positive semi-orbit ${\mathcal O}_+(z)$ returns to~$\Sigma_A$ for the $k$-th time and by~$T_{k, B}(z)$ the moment of time when this semi-orbit intersects~$\Sigma_B$ for the first time after~$T_{k, A}(z)$. Let $\Lambda = \frac{\mu_A \mu_B}{\lambda_A \lambda_B} > 1$ be the product of the characteristic numbers of the saddles~$A, B$ and let $\Lambda_0 = \frac{\Lambda + \mu_A/\lambda_B}{1 + \mu_A/\lambda_B}$.
By Lemma~2 of~\cite{GK07}, there is a continuous positive function~$\gamma$ defined on the open semi-transversal~$\Sigma_A \setminus \Gamma$ such that
\begin{equation}\label{eq:geometric}
T_{k, A}(z) = \Lambda^k \cdot (\gamma(z) + o(1)) \;\; \text{ as } k \to +\infty.
\end{equation}
The meaning of this is that the moments $T_{k, A}(z)$ almost form a geometric series. 
Note that~\eqref{eq:geometric} implies $\gamma(P(z)) = \Lambda\gamma(z)$, where $P$ is the Poincar\'e map associated with the polycycle, and so $\gamma(z) \to +\infty$ as $z \to 0$ along the transversal, where $\{0\} = \Sigma_A \cap \Gamma$.
By Proposition~3 of~\cite{GK07}, we also have
\begin{equation}\label{eq:geometric2}
T_{k, B}(z) = \Lambda_0\Lambda^k \cdot (\gamma(z) + o(1)) \;\;  \text{ as } k \to +\infty.
\end{equation}

Let us introduce the logarithmic time $\tau = \tau(t) = \log_\Lambda(t)$. Using notation $\tau_{k, A}(z) = \tau(T_{k, A}(z))$ and $\tau_{k, B}(z) = \tau(T_{k, B}(z))$ we can rewrite~\eqref{eq:geometric} and~\eqref{eq:geometric2} as
\begin{equation}\label{eq:sec1}
\tau_{k, A}(z) = \log_\Lambda\gamma(z) + k + o(1),
\end{equation}
\begin{equation}\label{eq:sec2}
\tau_{k, B}(z) = \log_\Lambda\Lambda_0 + \log_\Lambda\gamma(z) + k + o(1) \text{ as } k \to +\infty.
\end{equation}

The same applies to the biangle of~$\tilde{\varphi}$, so for a given $\tilde{z} \in \Sigma_{\tilde A}\setminus\tilde{\Gamma}$  there are sequences $T_{k, \tilde{A}}(\tilde{z})$ and~$T_{k, \tilde{B}}(\tilde{z})$ which we want to rewrite in the coordinate~$\tau$ introduced above. We use exactly the same $\tau$, not its analogue for the second flow. Using the notation $\tau_{k, \tilde{A}}(\tilde{z}) = \tau(T_{k, \tilde{A}}(\tilde{z}))$ and $\tau_{k, \tilde{B}}(\tilde{z}) = \tau(T_{k, \tilde{B}}(\tilde{z}))$, we naturally have
\begin{equation}\label{eq:sec3}
\tau_{k, \tilde{A}}(\tilde{z}) = \log_\Lambda\tilde{\gamma}(\tilde{z}) + k \cdot \log_\Lambda\tilde{\Lambda} + o(1),
\end{equation}
\begin{equation}\label{eq:sec4}
\tau_{k, \tilde{B}}(\tilde{z}) = \log_{\Lambda}\tilde{\Lambda}_0 +
\log_\Lambda\tilde{\gamma}(\tilde{z}) + k \cdot \log_\Lambda\tilde{\Lambda} + o(1) \text{ as } k \to +\infty.
\end{equation}

The sequences~\eqref{eq:sec1},\eqref{eq:sec2} and~\eqref{eq:sec3},\eqref{eq:sec4} provide two partitions of the $\tau$-axis. Let us assign colors to the intervals of these two partitions: let the intervals $(\tau_{k, A}(z), \tau_{k, B}(z))$ be white,
the intervals $(\tau_{k, B}(z), \tau_{k+1, A}(z))$ be black,
the intervals $(\tau_{k, \tilde{A}}(\tilde{z}), \tau_{k, \tilde{B}}(\tilde{z}))$ be blue, and
the intervals $(\tau_{k, \tilde{B}}(\tilde{z}), \tau_{k + 1, \tilde{A}}(\tilde{z}))$ be red. Thus, white intervals correspond to $\varphi^{t(\tau)}(z)$ being in~$U_A$, black ones correspond to it being in~$U_B$, blue ones correspond to the $\tilde{\varphi}^t$-image of~$\tilde{z}$ being in~$U_{\tilde{A}}$, and the red ones are those where it is in~$U_{\tilde{B}}$.

It is sufficient to prove that for each of the four combinations of colors --- white-blue, white-red, black-blue, black-red --- there is a set of pairs~$(z, \tilde{z})$ of positive measure\footnote{Since $z$ and~$\tilde{z}$ are taken from the transversals~$\Sigma_A$ and~$\Sigma_{\tilde{A}}$ in our construction, here we mean the product of the one-dimensional Lebesgue measures on these transversals.} such that for each pair~$(z, \tilde{z})$ there are infinitely many intersections, with length bounded away from zero, of intervals of the two colors. Indeed, suppose that there is a segment~$[\tau_1, \tau_2]$ of length $\e > 0$ that is in the intersection of a black and a red intervals. Since $\tau$ is the logarithmic time, at $t$-time equal to $\Lambda^{\tau_2}$ the fraction of time spent by the $\Phi$-orbit of~$(z, \tilde{z})$ in the set~$U_{B\tilde{B}} = U_B \times U_{\tilde{B}}$ is at least
\[\frac{\Lambda^{\tau_2} - \Lambda^{\tau_1}}{\Lambda^{\tau_2}} = 1 - \Lambda^{\tau_1 -\tau_2} = 1 - \Lambda^{-\e} > 0,\]
and so, if there are infinitely many such intervals, this fraction does not converge to zero for this orbit.
The same argument applies if we replace~$z$ by another point~$x$ of its~$\varphi$-orbit, or $\tilde{z}$ by another point~$\tilde{x}$ of its~$\tilde{\varphi}$-orbit, because the four sequences analogous to~\eqref{eq:sec1}--\eqref{eq:sec4} will be the same for the new points, up to residual terms of class~$o(1)$. But since the pairs~$(z, \tilde{z})$ form a positive measure set in the product of the transversals, the pairs $(x, \tilde{x})$ will form a set of positive product measure in the total phase space~$X \times \tilde{X}$. Then the set~$U_{B\tilde{B}}$ cannot be statistically inessential for~$\Phi$, which implies that its closure contains a point of the attractor, but this point can only be~$(B, \tilde{B})$. The argument for the other three pairs of colors is the same.
 
Suppose first that $\log_\Lambda\tilde{\Lambda} = \ln\tilde{\Lambda}/\ln\Lambda$ is irrational. The sequences~\eqref{eq:sec1}--\eqref{eq:sec4} are almost arithmetic progressions, but with common differences that have irrational ratio. If there were no residual terms of class $o(1)$ in these sequences and we decided to consider the factor of the $\tau$-axis by a unit shift, the two sequences~\eqref{eq:sec1} and~\eqref{eq:sec2} would factor into two points at the factor-circle and the sequences~\eqref{eq:sec3} and~\eqref{eq:sec4} would become the orbits of an irrational circle rotation by $\log_\Lambda\tilde{\Lambda}$.

Any forward semi-orbit of an irrational rotation is everywhere dense. Hence, for any interval~$I$ of the $\tau$-axis, any orbit of the irrational shift $\tau \mapsto \tau + \log_\Lambda\tilde{\Lambda}$ visits the set~$I + \ZZ = \{x + n \mid x \in I, \; n \in \ZZ\}$ infinitely many times. Then so does sequence~\eqref{eq:sec3}, because it is asymptotically an orbit of an irrational shift.

Let $I$ be a small neighborhood of the point~$\log_\Lambda(\gamma(z))$. Then, if an element of the sequence~\eqref{eq:sec3} with large index is in~$I + \ZZ$, we have almost coincidence of the left edges of a white and a blue interval, and hence a huge, with respect to the real time~$t$, intersection of these intervals. The argument for the other color pairs is analogous. Thus we get infinitely many huge intersections of intervals for any pair of colors and for any pair~$(z, \tilde{z})$, and we are done with the irrational case.

Now suppose that $\log_\Lambda\tilde{\Lambda} = \frac{p}{q} \in \QQ$. Let us consider the white-blue intersections; the argument for other color pairs is analogous. Fix any $z \in \Sigma_A \setminus \Gamma$ and choose~$\tilde{z}$ such that
\[\log_\Lambda\tilde{\gamma}(\tilde{z}) - \log_\Lambda\gamma({z}) \in \ZZ.\]
It is indeed possible to find such $\tilde{z}$ because $\tilde{\gamma}(\tilde{z})$ tends to $+\infty$ as $\tilde{z} \to 0$.
Then for~$(z, \tilde{z})$ we have asymptotic coincidence of the left edges of blue intervals with numbers~$qn,\; n\in\nn,$ and the left edges of white intervals with numbers~$a + pn$ (here~$a = \log_\Lambda\tilde{\gamma}(\tilde{z}) - \log_\Lambda\gamma({z})$). If we slightly alter $\tilde{z}$, because of continuity of~$\tilde{\gamma}$ we will still get huge intersections of the intervals. So, for a fixed $z$, there is at least a segment of $\tilde{z}$-s which produce the white-blue intersections. By the Fubini theorem, the set of pairs~$(z, \tilde{z})$ for which we have infinitely many huge white-blue intersections has positive measure.

For the white-red pair, one should first find~$\tilde{z}$ such that $\log_{\Lambda}\tilde{\Lambda}_0 + \log_\Lambda\tilde{\gamma}(\tilde{z}) - \log_\Lambda\gamma({z}) \in \ZZ$, etc.

\end{proof} 

\begin{rem}\label{rem:triangle}
Proposition~\ref{prop:biangle} can be generalized rather straightforwardly, via generalizing Lemma~2 of~\cite{GK07}, to the case of arbitrary monodromic attracting polycycles that have only hyperbolic saddles as their singularities. 
\end{rem}

\section{Statistical attractor of a product of two different modified Bowen examples.}\label{sec:twombe}

In this section we will prove the following generalization of Theorem~\ref{thm:stat}, or, more precisely, of Proposition~\ref{prop:statstruct}. The key difference is that now we are considering the product of two arbitrary modified Bowen examples instead of the square of one.

\begin{thm}\label{thm:twombe}
Let $\varphi$ and $\tilde{\varphi}$ be two planar $C^\infty$-flows with modified Bowen examples, i.e., with attracting monodromic polycycles, $\Gamma$ for~$\varphi$ and~$\tilde{\Gamma}$ for~$\tilde{\varphi}$, that have a saddle $B$ (or~$\tilde{B}$ for~$\tilde{\varphi}$) and a generic saddle-node~$A$ (resp.,~$\tilde{A}$). Let monodromic semi-neighborhoods~$X$ and~$\tilde{X}$ of these polycycles be chosen\footnote{This means that we replace $\varphi$ and~$\tilde{\varphi}$ by their restrictions to these sets.} as phase spaces of~$\varphi$ and~$\tilde{\varphi}$. Then for the product flow~$\Phi = \varphi \times \tilde{\varphi}$ we have
\[A_{stat}(\Phi) = \{(A, \tilde{A}), (A, \tilde{B}), (B, \tilde{A})\},\]
\[A_{Mil}(\Phi) = \left(\Gamma \times \{\tilde{A}\}\right) \cup \left(\{A\} \times \tilde{\Gamma}\right).\]
\end{thm}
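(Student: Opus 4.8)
The plan is to mimic the proof of Theorem~\ref{thm:stat} and Proposition~\ref{prop:statstruct}, replacing the square by a genuine product and replacing the key dynamical input (Lemma~4 of~\cite{K06}, which asserts that two orbits of the \emph{same} flow cannot both sit in $U_B$ forever) by an analogous statement about two orbits of the \emph{two different} flows $\varphi$ and $\tilde\varphi$. First I would set up the same picture: transversals $\Sigma_A,\Sigma_B$ splitting $X$ into $U_A\ni A$, $U_B\ni B$, chosen so that $U_A$ is an arbitrarily small neighborhood of the saddle-node, and likewise with tildes; and I would recall from~\cite{K06} (Lemma~1) the asymptotics of the saddle-node monodromy map $\Delta_{sn}$ and of the transition time. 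The crucial feature, as in Section~\ref{sec:bowen}, is that for a generic contracting saddle-node the passage time through $U_A$ grows like a \emph{geometric} sequence in the number of winds (the saddle-node traps the orbit for a time that is roughly multiplied by a fixed factor $> 1$ each turn), whereas the passage through $U_B$ takes only logarithmic time; so the $n$-th return time $T_n(z)$ to $\Sigma_A$ satisfies $T_n(z) = \Lambda^n(\gamma(z)+o(1))$ for some $\Lambda>1$ and some positive continuous $\gamma$ with $\gamma(z)\to\infty$ as $z\to 0$, exactly paralleling~\eqref{eq:geometric}. The corresponding logarithmic-time statement is that, in the coordinate $\tau=\log_\Lambda t$, the windows during which $\varphi^{t(\tau)}(z)$ is in $U_B$ become, asymptotically, a shift by $1$ of a fixed set, i.e.\ an "eventually periodic" pattern with period $1$ on the $\tau$-axis, and similarly the windows for $\tilde\varphi$ are asymptotically a shift by $\log_\Lambda\tilde\Lambda$ of a fixed set.

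Next, for the statistical attractor, I would run the coloring argument of Proposition~\ref{prop:biangle} almost verbatim. Color the intervals where $\varphi^{t(\tau)}(z)\in U_A$ white, where it is in $U_B$ black, where $\tilde\varphi^{t(\tau)}(\tilde z)\in U_{\tilde A}$ blue, and where it is in $U_{\tilde B}$ red. The key asymmetry with the biangle case is that now the saddle-node passages dominate: the black windows and the red windows each occupy a \emph{vanishing} fraction of real time on long scales, while the white and blue windows each occupy almost all of it. Hence the pair black-red essentially never produces a large (in real time) overlap, so $U_B\times U_{\tilde B}$ is statistically inessential and $(B,\tilde B)\notin A_{stat}(\Phi)$; but the pairs white-blue, white-red, black-blue each do produce, for a positive-measure set of $(z,\tilde z)$, infinitely many overlaps that are huge in real time — for white-blue this is automatic since both are almost the whole $\tau$-axis; for white-red and black-blue one chooses $\tilde z$ (using $\tilde\gamma(\tilde z)\to\infty$) so that the relevant left edges asymptotically align mod $\ZZ$, treating the rational and irrational cases of $\log_\Lambda\tilde\Lambda$ exactly as in Proposition~\ref{prop:biangle}, then uses Fubini to get a positive-measure set of pairs, and finally transfers from the transversals to $X\times\tilde X$ since the asymptotics are insensitive to $o(1)$ perturbations. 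Combined with $A_{stat}(\Phi)\subset A_{stat}(\varphi)\times A_{stat}(\tilde\varphi)=\{A,B\}\times\{\tilde A,\tilde B\}$ (Remark~\ref{rem:proj}) and $\{(A,\tilde A)\}=A_{min}(\Phi)\subset A_{stat}(\Phi)$, this pins down $A_{stat}(\Phi)=\{(A,\tilde A),(A,\tilde B),(B,\tilde A)\}$.

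For the Milnor attractor I would argue as in Proposition~\ref{prop:statstruct}. Since black and red windows are eventually disjoint in \emph{clock} time for a.e.\ pair, $U_B\times U_{\tilde B}$ is even Milnor-inessential, so (using that $U_A,U_{\tilde A}$ can be shrunk to arbitrarily small neighborhoods of $A,\tilde A$ and that $A_{Mil}(\Phi)\subset A_{max}(\Phi,X\times\tilde X)=\Gamma\times\tilde\Gamma$) we get $A_{Mil}(\Phi)\subset(\Gamma\times\{\tilde A\})\cup(\{A\}\times\tilde\Gamma)=:\Om$. Conversely the two projections of $A_{Mil}(\Phi)$ are $\Gamma$ and $\tilde\Gamma$ (Remark~\ref{rem:proj}, since $A_{Mil}(\varphi)=\Gamma$), and the only closed subset of $\Om$ surjecting onto both factors is $\Om$ itself, so $A_{Mil}(\Phi)=\Om$.

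\medskip

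\noindent\emph{Main obstacle.} The routine part is the coloring/Fubini bookkeeping, which is genuinely parallel to Proposition~\ref{prop:biangle}. The real work is establishing the black-red disjointness rigorously: I need a clean analogue of Lemma~4 of~\cite{K06} for two \emph{different} modified Bowen flows, stating that for a.e.\ $(x,\tilde x)$ the forward orbits cannot be in $U_B$ and $U_{\tilde B}$ simultaneously from some time on. The delicate point is that the exceptional zero-measure set is no longer "pairs on the same orbit" (that notion is meaningless across two different flows); instead it must come from the resonance structure — roughly, from pairs $(z,\tilde z)$ for which $\log_\Lambda\gamma(z)$ and $\log_{\tilde\Lambda}\tilde\gamma(\tilde z)$ fall into a measure-zero bad set. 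I expect that for \emph{every} pair off a small exceptional set the black window of index $n$ and the red window of index $m$ can overlap only when the corresponding real-time lengths are both bounded, which (by the geometric growth $T_n\sim\Lambda^n$) forces $n,m$ bounded, hence only finitely many overlaps; making this uniform over a positive-measure set of pairs, and ruling out the pathological ratios, is where the argument must be most careful. Once that lemma is in hand, both displayed identities follow by the template above.
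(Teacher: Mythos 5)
Your overall architecture (show $U_B \times U_{\tilde B}$ is inessential, then descend via projections and $A_{min}$, then bootstrap to the Milnor attractor as in Proposition~\ref{prop:statstruct}) is the right skeleton, and your observation that the hard core is a two-flow analogue of Lemma~4 of~\cite{K06} is exactly right. But the dynamical input you feed into this skeleton is wrong in a way that makes your strategy collapse, because you have imported the \emph{biangle} asymptotics into a setting where they do not hold.

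For a hyperbolic biangle, $T_{k,A}(z) = \Lambda^k(\gamma(z)+o(1))$ with $\Lambda > 1$, so $\log_\Lambda t$ turns the return sequence into an approximate arithmetic progression and the coloring/rotation-number argument of Proposition~\ref{prop:biangle} applies. For the modified Bowen example the saddle-node forces \emph{doubly exponential} growth: the Poincar\'e map in the coordinate $\eta=1/x$ is $P_\eta(\eta)=e^{\frac{\mu}{\lambda}\eta(1+o(1))}$ and the rotation time is $T_\eta(\eta)=c\eta(1+o(1))$, so $t_{k+1}\approx e^{ct_k}$. Consequently the natural time reparametrization is $\tau=\ln\ln t$ (not $\log_\Lambda t$), and in $\tau$-time the return sequence satisfies the recurrence $\tau_{n+1}(z)=e^{\tau_n(z)}+\ln\frac{b\mu}{b+\lambda}+o(1)$, which is nothing like an arithmetic progression. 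Your intended ``shift by $\log_\Lambda\tilde\Lambda$'' picture, the rational/irrational dichotomy, and the resonance set of pairs $(z,\tilde z)$ for which $\log_\Lambda\gamma(z)$ and $\log_{\tilde\Lambda}\tilde\gamma(\tilde z)$ fail to align --- none of this is the right structure, and the proof cannot be run as a variant of Proposition~\ref{prop:biangle}.

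A second, related error: you assert that the $U_B$ and $U_{\tilde B}$ windows occupy a vanishing fraction of real time, so that black--red overlaps are automatically negligible. This is false. For the modified Bowen example the ratio of time spent in $U_B$ to time spent in $U_A$ during the $k$-th turn tends to a positive constant $b/\lambda$, which is precisely why $B$ belongs to $A_{stat}(\varphi)$; the real-time fraction spent in $U_B$ has positive $\limsup$. What does shrink to zero is the \emph{double-logarithmic} width of the $U_B$ windows (Proposition~\ref{prop:osmall} in the paper). So your heuristic does not even give the right qualitative reason for $(B,\tilde B)\notin A_{stat}(\Phi)$.

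The actual mechanism the paper uses in place of your resonance analysis is an amplification argument tailored to the recurrence $\tau_{n+1}=e^{\tau_n}+c+o(1)$: if at some large index the sequence $(a_k)$ for $z$ sits $\e$-away from $\{b_n\}$ for $\tilde z$, then taking $\exp$ magnifies the gap to more than~$1$, and by induction the two sequences stay $1$-separated forever after; the complementary case (both sequences have points arbitrarily close to each other arbitrarily far out) is handled by the analogue of Proposition~\ref{prop:loop} for the saddle-node Poincar\'e map in the coordinate $\zeta=\ln\ln\eta$, showing that for points $z,z'$ on distinct $\varphi$-orbits the $\tau$-sequences become separated, so one can perturb $z$ off a single exceptional orbit and apply Fubini. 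Your ``main obstacle'' paragraph is therefore pointing at the right target, but the anticipated resolution (bounded overlap indices by geometric growth, then a measure-zero resonance set) is not the route, and the geometric growth you rely on is simply absent. If you replace the $\log_\Lambda t$ scale by $\ln\ln t$, replace the approximate arithmetic structure by the exponential recurrence, and prove the separation-of-sequences lemma for a single MBE flow, the rest of your proposal does go through.
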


\begin{proof}
Let the transversals $\Sigma_A$, $\Sigma_B$ and domains $U_A$, $U_B$ be defined as in the previous sections. We will use tilde to denote the analogous objects for the flow $\tilde{\varphi}$. Let $-\mu, \lambda$ be the eigenvalues of the saddle~$B$ and $-b$ be the negative eigenvalue of the saddle-node~$A$.

In appropriate coordinates on~$\Sigma_A, \, \Sigma_B$, the saddle-node monodromy map~${\Delta_{sn} \colon \Sigma_A \to \Sigma_B}$ and transition time $t_{sn}$ can be written as (see~\cite{K06}):
\[\Delta_{sn}(x) = x^{-a}e^{-1/x}; \quad t_{sn}(x) = \frac{1}{b}\cdot \frac{1}{x}(1 + o(1)) \text{ as } x \to 0,\]
where $a$ is the parameter of the smooth orbital normal form of the saddle-node.
Recall that in any smooth coordinates on the transversals with zeros on the separatrices (in particular, in the coordinates used above for the saddle-node), the saddle monodromy map~$\Delta_s$ and transition time are (\cite{T96} and~\cite{K06} or~\cite{GK07}):
\[\Delta_{s}(x) = const \cdot x^{\mu/\lambda}(1 + o(1)); \quad
t_{s}(x) = \frac{1}{\lambda}\ln\frac{1}{x}(1 + o(1)) \text{ as } x \to 0.\]

Let us calculate the asymptotics of the total Poincar\'e map and rotation time:
\[P(x) = \Delta_s \circ \Delta_{sn}(x) = const\cdot(x^{-a}e^{-1/x})^{\mu/\lambda}(1 + o(1)) = e^{-\mu/(\lambda x)\cdot (1 + o(1))},\]
\[T(x) = t_{sn}(x) + t_s(\Delta_{sn}(x)) = \left(\frac{1}{b} + \frac{1}{\lambda}\right)\frac{1}{x}(1+o(1))  \text{ as } x \to 0.\]

It is worthwhile to rewrite this in the coordinate $\eta = 1/x$:
\begin{equation}\label{eq:asy}
P_\eta(\eta) = const \cdot (\eta^{-a}e^{\eta})^{\mu/\lambda}(1 + o(1)) = e^{\frac{\mu}{\lambda} \eta \cdot (1 + o(1))},
\end{equation}
\begin{equation}\label{eq:asy2}
T_\eta(\eta) = \left(\frac{1}{b} + \frac{1}{\lambda}\right) \cdot \eta \cdot (1+ o(1)),  \text{ as } \eta \to +\infty.
\end{equation}

As above, for $z \in \Sigma_A$, let $T_{k, A}(z)$ be the moment when~${\mathcal O}_+(z)$ returns to~$\Sigma_A$ for the $k$-th time and~$T_{k, B}(z)$ be the moment when it intersects~$\Sigma_B$ after~$T_{k, A}(z)$. Let
\[t_k(z) = T_{k+1, A}(z) - T_{k, A}(z)\]
be the time of the $(k+1)$-th turn and let $t_{k, B}(z)$ be the time spent in~$U_B$ during this turn. We will omit the argument $z$ sometimes.

\begin{prop}\label{prop:osmall}
In the double logarithmic time coordinate $\tau(t) = \ln\ln t$ we have
\[\tau(T_{k+1, A}(z)) - \tau(T_{k, B}(z)) = o(1), \text{ as } k \to +\infty.\]
That is, the $\tau$-time spent in~$U_B$ tends to $0$ as the number of the turn grows.
\end{prop}
\begin{proof}
As one can derive from the asymptotics of the monodromy maps and transition times (or read in Proposition~1 of~\cite{K06}), the ratio of times spent in~$U_B$ and~$U_A$ during a turn tends to a constant. Indeed, if there is a point with coordinate~$x$ at the transversal~$\Sigma_A$, it takes time $t_{sn}(x)$ to get to the point $\Delta_{sn}(x)$ at~$\Sigma_B$ and then it takes $t_s(\Delta_{sn}(x))$ to arrive at~$P(x)$, and so we have, using the asymptotic equalities above,
\[\frac{t_s(\Delta_{sn}(x))}{t_{sn}(x)} = \frac{\frac{1}{\lambda x}(1 + o(1))}{\frac{1}{bx}(1 + o(1))} = \frac{b}{\lambda} + o(1) \text{ as } x \to +0.\]
Then, since $P^k(z) \to 0$ as $k \to +\infty$, we can write
\[\frac{t_{k, B}(z)}{t_{k}(z) - t_{k, B}(z)} = \frac{t_s(\Delta_{sn}(P^k(z)))}{t_{sn}(P^k(z))} \xrightarrow{k \to +\infty} \frac{b}{\lambda}.\]
Since for arbitrary positive $x, y, t$ inequality $y < x$ implies $\frac{t + x}{t + y} < \frac{x}{y}$, we have that
\[\frac{T_{k+1, A}(z)}{T_{k, B}(z)} = \frac{T_{k, A}(z) + t_{k}(z)}{T_{k, A}(z) + t_{k}(z) - t_{k, B}(z)} < \frac{t_{k}(z)}{t_{k}(z) - t_{k, B}(z)} \to \frac{b+\lambda}{\lambda}.\]
Now we just consecutively apply logarithms to our two sequences and observe what we get. First, we have
\[\ln(T_{k+1, A}(z)) - \ln(T_{k, B}(z)) = \ln\frac{T_{k+1, A}(z)}{T_{k, B}(z)} < \ln(const + o(1)) < const.\]
Since $T_{k+1, A}(z)$ grows to infinity with~$k$, we have $\frac{\ln(T_{k+1, A}(z))}{\ln(T_{k, B}(z))} \to 1$ and hence
\[\ln\ln(T_{k+1, A}(z)) - \ln\ln(T_{k, B}(z)) = \ln\frac{\ln(T_{k+1, A}(z))}{\ln(T_{k, B}(z))} \to 0, \text{ as } k \to +\infty,\]
which proves the proposition.
\end{proof}

We will use the shorthand notation 
\[\tau_n(z) = \tau(T_{n+1, A}(z)) = \ln\ln(T_{n+1, A}(z)).\]

The following proposition is auxiliary and will only be used in Proposition~\ref{prop:separ}.
\begin{prop}\label{prop:taueta}
Let $\eta_n(z)$ be the $\eta$-coordinates of the points $P^n(z)$, for $n \in \nn$.
Then we have $\tau_n(z) = \ln\ln\eta_n + o(1)$ as $n \to +\infty$.
\end{prop}
\begin{proof}
By~\eqref{eq:asy2}, we have $t_{n} = T_\eta(\eta_{n}) = c\eta_{n} \cdot (1 + o(1))$ as $n \to +\infty$, so
\[\ln\ln t_{n} = \ln\ln(c \eta_{n} (1 + o(1))) = \ln\ln\eta_n + o(1).\]
From~\eqref{eq:asy} we see that $\frac{\eta_{n+1}}{\eta_n} \to +\infty$, and from the form of $T_\eta$ we deduce that $\frac{t_{n+1}}{t_n} \to +\infty$ as well. It is easy to see that for any sequence $(a_n)_n$ such that $\frac{a_{n+1}}{a_n} \to +\infty$ one has $\sum_1^n a_k = a_n(1 + o(1))$ as $n \to +\infty$. This means that
\[T_{n+1, A} = \sum_{k = 0}^n t_k = t_n\cdot(1 + o(1)),\]
which yields
\[\tau_n(z) = \ln\ln(T_{n+1, A}(z)) = \ln\ln t_n + o(1) = \ln\ln\eta_n + o(1) \text{ as } n \to +\infty.\]
\end{proof}

\begin{prop}\label{prop:separ}
For any pair of points~$x, y \in \Sigma_A$ with different $\varphi$-orbits there is an integer~$m$ such that the sets $\{\tau_n(x)\}_{n > m}$ and $\{\tau_n(y)\}_{n > m}$ are separated by some positive distance.
\end{prop}
\begin{proof}
First let us rewrite $P_\eta$ in the double logarithmic coordinate $\zeta = \ln\ln\eta$ and prove that the resulting map is expanding on some domain~$(\zeta_0, +\infty)$. Denote the Poincar\'e map in coordinate~$\zeta$ by~$\Delta(\zeta)$. Then we have
\[\Delta(\zeta) = \ln\ln\left(1/P(1/e^{e^\zeta})\right) = 
\ln\left(-\ln\left(\Delta_s\circ\Delta_{sn}(1/e^{e^\zeta})\right)\right).\]
Let us prove that $\Delta$ is expanding. Applying the chain rule, we have
\[\Delta'(\zeta) = \frac{\left(-\ln(P(1/e^{e^\zeta}))\right)'}{-\ln\left(P(1/e^{e^\zeta})\right)} =
\frac{-\Delta'_s(\Xi) \cdot \Delta'_{sn}(1/e^{e^\zeta}) \cdot \frac{-e^\zeta}{e^{e^\zeta}}}{\Delta_s(\Xi) \cdot (-\ln\Delta_s(\Xi))},\]
where $\Xi$ is a shorthand for $\Delta_{sn}(1/e^{e^\zeta})$, and so $P(1/e^{e^z}) = \Delta_s(\Xi)$.

Recall that $\Delta_{sn}(x) = x^{-a}e^{-1/x}$. Then
$\Delta'_{sn}(x) = e^{-1/x}(x^{-a-2} - ax^{-a-1})$ and
\[\Delta'_{sn}(1/\eta) = e^{-\eta}(\eta^{a+2} - a\eta^{a+1}).\]
Also, relation between $\eta$ and $\zeta$ yields that
$\Xi = \Delta_{sn}(1/\eta) = \eta^{a}e^{-\eta}.$

From~\eqref{eq:asy} we have
\[-\ln P(1/e^{e^\zeta}) = \ln P_\eta(\eta) \sim \frac{\mu}{\lambda}\eta \text{ as } \eta, \zeta \to +\infty.\]

Consider the ratio $\frac{\Delta'_s(\Xi)}{\Delta_s(\Xi)}$. As we discussed earlier in Section~\ref{sec:loop}, we have $\lim_{x \to 0} \frac{x\Delta'_s(x)}{\Delta_s(x)} = \frac{\mu}{\lambda}$, which allows us to write
\[\frac{\Delta'_s(\Xi)}{\Delta_s(\Xi)} \sim \frac{\mu}{\lambda \cdot \Xi} = \frac{\mu}{\lambda \cdot \eta^{a}e^{-\eta}}.\]

Bringing everything together, we have
\[\Delta'(\zeta) \sim \frac{\mu}{\lambda\eta^{a}e^{-\eta}} \cdot \frac{e^{-\eta}(\eta^{a+2} - a\eta^{a+1}) \cdot \frac{e^\zeta}{\eta}}{(\mu/\lambda)\eta} \sim e^\zeta \text{ as } \zeta \to +\infty.\]
So, $\Delta$ is expanding.

Replacing $y$ by another point of its orbit we assume that $y$ is between $x$ and $P(x)$. Let $\zeta_n(x) = \Delta^n(\zeta(x))$ and $\zeta_n(y) = \Delta^n(\zeta(y))$. Since $\Delta$ is monotonic, we always have $\zeta_n(x) < \zeta_n(y) < \zeta_{n+1}(x)$, and since it is expanding, the sequences are separated. But by Proposition~\ref{prop:taueta} we have $\tau_n(z) = \zeta_{n}(z) + o(1)$, so the sequences $(\tau_n(x))_n$ and $(\tau_n(y))_n$ become separated after we drop several elements in the beginning that may coincide.
\end{proof}

\begin{prop}\label{prop:recurr}
There is a recurrence relation
$$\tau_{n+1}(z) = e^{\tau_n(z)} + \ln\frac{b\mu}{b + \lambda} + o(1) \text{ as } n\to +\infty.$$ Notably, the relation has the same form for all~$z$, although the remainder terms of class~$o(1)$ can be different. 
\end{prop}
\begin{proof}
Since $T_\eta(\eta) = c\eta(1+o(1))$ as $\eta \to +\infty$, we can write, using asymptotic equality~\eqref{eq:asy},
\[T_\eta(P_\eta(\eta)) = cP_\eta(\eta)\cdot(1+o(1)) = cP_\eta\left(\frac{1}{c}T_\eta(\eta)(1 + o(1))\right) = e^{\frac{\mu}{c\lambda}T_\eta(\eta)(1+ o(1))} \text{ as } \eta \to +\infty.\]
Hence
\[t_{k+1} = e^{\frac{\mu}{c\lambda}t_{k}(1+ o(1))} \text{ as } k \to +\infty.\]

Since $T_{k+1, A} \sim t_k$ as $k \to \infty$, we can write
\[T_{k+1, A} = e^{\frac{\mu}{c\lambda}T_{k, A} \cdot (1+ o(1))} \text{ as } k\to+\infty.\]
Switching to double logarithms, we rewrite this as
\[\tau_{k+1}(z) = e^{\tau_k(z)} + \ln\frac{\mu}{c\lambda} + o(1).\]
But $c = \frac{1}{b} + \frac{1}{\lambda}$, and we are done.
\end{proof}

Now we are ready to finish the proof of the theorem.

\medskip

{\it End of the proof of Theorem~\ref{thm:twombe}.}
Fix some $z \in \Sigma_A$ as above and denote $a_n = \tau_n(z)$. Take a point $\tilde{z} \in {\Sigma}_{\tilde{A}}$ for another polycycle and denote the analogous sequence by~$(b_n)_n$. Our goal is to prove that for almost every (w.r.t. the product of Lebesgue measures on the transversals) pair~$(z, \tilde{z})$ the positive semi-orbit $\left(\Phi^t(z, \tilde{z})\right)_{t \ge 0}$ visits the set $U_B \times U_{\tilde{B}}$ only finitely many times. After we do this, the proof is finished exactly as in Proposition~\ref{prop:statstruct}.

By Proposition~\ref{prop:osmall}, it suffices to prove that for almost any pair $(z, \tilde{z})$ there is an integer~$m$ such that the sets $\{a_n\}_{n > m}$ and $\{b_n\}_{n > m}$ are separated by positive distance: $\tau$-intervals that correspond to orbits visiting~$U$ or~$\tilde{U}$ get small and get close to the two sequences, so only finitely many of them can intersect. We will show that for a fixed~$\tilde{z}$ this holds for almost every~$z$ and then apply the Fubini theorem.

There are two possibilities for the two sequences $(a_n)_n$ and $(b_n)_n$:
\begin{enumerate}[label=\arabic*)]\setlength\itemsep{-0.1em}
\item either
\[\lim_{k\to +\infty} \inf_n \dist(a_k, b_n) = 0 \;\; \text{ and } \lim_{k\to +\infty} \inf_n \dist(b_k, a_n) = 0,\]
\item or this is not the case.
\end{enumerate}

If the first one holds, replace $z$ with a point $z'$ of different $P$-orbit; the sequence $(a_n)_n$ is then replaced by a corresponding $(c_n)_n$ that (when viewed as a set) after truncation is separated from $\{a_n\}_n$ by Proposition~\ref{prop:separ}. But then for some $m$ the sets $\{c_n\}_{n > m}$ and $\{b_n\}_{n > m}$ are separated: otherwise there would be elements~$c_n$, with large~$n$, close to some $b_k$ and, by our assumption that the first case holds, also close to some~$a_l$, which is impossible, because~$\{a_n\}_n$ and~$\{c_n\}_n$ are separated. Since this holds for all $z'$ except those at the $P$-orbit of $z$, we are done.

Consider the second possibility. In this case there are arbitrary large indices $k$ for which, say, $a_k$ (for $b_k$ the argument would be analogous) is $\e$-far from $\{b_n\}_n$. Let $a_k$ be between $b_n$ and $b_{n+1}$. Recall that by Proposition~\ref{prop:recurr} we have $b_{n+1} = \exp(b_n) + c_2 + o(1)$ as $n \to +\infty$ and $a_{k+1} = \exp(a_k) + c_1 + o(1)$ as $k \to +\infty$. Then, since $n$ goes to infinity with $k$, we have
\[a_{k+1} - b_{n+1} = \exp(a_k) - \exp(b_n) + c_1 - c_2 + o(1).\]
Obviously, $\exp(a_k) - \exp(b_n) > \exp(b_n)(a_k - b_n) > \exp(b_n)\cdot \e$, but we could have chosen $k$ arbitrarily large, so that $n$ is large as well and $\exp(b_n)\cdot \e$ is much greater than any given constant. This allows us to write an estimate
\[a_{k+1} - b_{n+1} > 1.\]
Analogously we obtain the estimate $b_{n+2} - a_{k+1} > 1$ and then argue inductively replacing $k, n$ with $k+1, n+1$ and so on. We get that in this case the sequences separate by unit distance after the pair $k, n$ and never come close to each other again. But then we are done for this particular pair $(z, \tilde{z})$.

For a fixed $\tilde{z}$, either we have the second case for every $(z, \tilde{z})$ and we are done, or we have the first case for some $(z, \tilde{z})$ --- then we replace $z$ with~$z'$ taken from another orbit and obtain separation of sequences as above. Thus we prove that the tails of sequences $(a_n)_n$ and $(b_n)_n$ are separated by positive distance for almost every pair~$(z, \tilde{z})$. As we wrote above, now the proof can be finished as in Proposition~\ref{prop:statstruct}.

\end{proof}

\begin{rem}\label{rem:cod3min}
If we rewrite the sequence of return times for a point of a biangle (i.e., a sequence~$T_{k, A}(z)$ from Proposition~\ref{prop:biangle}) or another attracting monodromic \emph{hyperbolic} polycycle (see Remark~\ref{rem:triangle}) in double logarithmic time, we will get a sequence that diverges to infinity while the distance between consecutive elements converges to zero. Roughly speaking, asymptotically we get a logarithm of an arithmetic series. At the same time, the rotation times for the modified Bowen example quickly diverge to infinity. Thus, the time interval of the $k$-th rotation of a point of the MBE contains many rotation intervals (with some numbers) of a biangle point if~$k$ is large, which implies that the statistical attractor of a product of an MBE and a biangle (or another hyperbolic polycycle) coincides with the product of attractors.

Combining this with Remark~\ref{rem:triangle}, we conclude that the example of non-coincidence for statistical attractors presented in this section is in a sense minimal for flows: one cannot reproduce the same effect using a product of a flow of codimension at most~3 and a flow of codimension at most~2; both flows must have a non-hyperbolic monodromic polycycle with at least two singularities.
\end{rem}


\section{Minimal attractors and SRB-measures}\label{sec:srb}
\subsection{The theorem}
In this section we will prove the following theorem.
\begin{thm}\label{thm:srb}
There exists a $C^\infty$-smooth flow $\varphi$ on an annulus such that~$\varphi$ has a global physical and natural measure $\mu_{\varphi}$, its square~$\Phi$ also has a global physical and natural measure $\mu_\Phi$, but
\begin{itemize}
\item $\mu_\Phi \neq \mu_{\varphi} \times \mu_{\varphi},$
\item $A_{min}(\Phi) \subsetneqq A_{min}(\varphi) \times A_{min}(\varphi)$.
\end{itemize}
\end{thm}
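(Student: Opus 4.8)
\textbf{Proof proposal for Theorem~\ref{thm:srb}.}

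The plan is to build the flow $\varphi$ on the annulus $\Cy$ so that its dynamics is governed by a two-point ``skeleton'' $\{A, B\}$ together with a slow oscillation mechanism that makes the empirical averages of a typical orbit \emph{converge} (unlike in the Bowen example) to a prescribed measure $\mu_\varphi$, while the same mechanism, when run on $\varphi\times\varphi$, forces near-perfect synchronization of the ``phases'' of the two coordinates of a typical pair. Concretely I would first fix two attracting points (or small hyperbolic sinks) $A$ and $B$ and arrange a heteroclinic-like channel between neighborhoods $U_A, U_B$; the novelty, announced in the introduction as ``oscillating measures'', is to modulate the speed along this channel by an extra slowly-varying parameter so that a typical orbit spends, in the long run, a definite fraction of time near $A$ and a definite fraction near $B$, with the fractions \emph{stabilizing}. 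This yields a global physical measure $\mu_\varphi = pA_* + qB_*$ type combination (a convex combination of the invariant measures sitting on $A$ and $B$), and by Theorem~2.1 of~\cite{BB03} adapted to flows it is also natural; I would make $A_{min}(\varphi) = \{A, B\}$ by checking that both $U_A$ and $U_B$ have non-vanishing time-averaged mass $\frac1T\int_0^T(\varphi^t_*\mu)(U)\,dt \not\to 0$, i.e. neither is minimally inessential.

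Next I would analyze $\Phi = \varphi\times\varphi$. The heart of the construction must be that for $(\mu\times\mu)$-a.e.\ pair $(x,y)$ the ``phase'' of the $x$-coordinate and the ``phase'' of the $y$-coordinate lock together, so that asymptotically both coordinates are near $A$ at the same times and near $B$ at the same times, and the pair essentially never sits in $U_A\times U_B$ or $U_B\times U_A$ with positive density. This is exactly the synchronization phenomenon discussed in Section~\ref{sec:twombe}, and I expect the mechanism to be a contraction (or at least a ``ratio bounded away from~$1$'' estimate in a suitable logarithmic/iterated-logarithmic time, in the spirit of Proposition~\ref{prop:separ} and Proposition~\ref{prop:recurr}) between the two phase variables along generic orbits. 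Granting this, $U_A\times U_B$ and $U_B\times U_A$ become minimally inessential for $\Phi$, so $A_{min}(\Phi)\subset\{(A,A),(B,B)\}$; conversely both $U_A\times U_A$ and $U_B\times U_B$ retain positive averaged mass, so $A_{min}(\Phi) = \{(A,A),(B,B)\} \subsetneqq \{A,B\}\times\{A,B\} = A_{min}(\varphi)\times A_{min}(\varphi)$, giving the second bullet. For the first bullet, the synchronization shows that the empirical measure of a typical $\Phi$-orbit converges to a convex combination $p\,\delta_{(A,A)} + q\,\delta_{(B,B)}$ supported on the diagonal pair of points, so $\mu_\Phi$ is a global physical (hence natural) measure for $\Phi$; but $\mu_\varphi\times\mu_\varphi = p^2\delta_{(A,A)} + pq\,\delta_{(A,B)} + pq\,\delta_{(B,A)} + q^2\delta_{(B,B)}$ charges $(A,B)$ and $(B,A)$, so $\mu_\Phi\neq\mu_\varphi\times\mu_\varphi$.

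The main obstacle, and where I expect most of the work to go, is the \emph{simultaneous} requirement that (i) a typical single orbit has \emph{convergent} Birkhoff averages (so that $\mu_\varphi$ exists as a genuine physical measure — this is what fails for the bare Bowen example and must be engineered by the oscillating-measure device, presumably by driving the modulation with something ergodic enough, e.g.\ an added angle variable with an irrational rotation or a carefully tuned slow drift, so the fractions of time near $A$ and $B$ average out) and (ii) for a typical \emph{pair} the two phases synchronize so tightly that $U_A\times U_B$ has zero time-averaged mass. Reconciling convergence-of-averages for one coordinate with near-deterministic phase-locking for the pair is delicate: the modulation must be ``autonomous enough'' to give convergence yet ``rigid enough'' to transmit the same phase to both factors. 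I would handle this by making the phase essentially a monotone function of a quantity (like $\tau_n(z)=\ln\ln T_{n,A}(z)$ in Section~\ref{sec:twombe}) whose dependence on the initial point washes out in logarithmic time, so that two different initial points produce the \emph{same} asymptotic phase schedule up to an $o(1)$ term — this simultaneously forces phase-locking of the pair and, for a single orbit, makes the schedule (and hence the time fractions) independent of the orbit, yielding the global physical measure. The remaining verifications — forward-invariance of $\Cy\times\Cy$, the $o(1)$ estimates for the transition times, and the minimal-inessentiality computations — are then routine extensions of the arguments in Sections~\ref{sec:bowen} and~\ref{sec:twombe}.
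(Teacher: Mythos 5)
You have the right \emph{target} --- a two-point skeleton $\{A,B\}$, a physical measure $\mu_\varphi=\tfrac12(\delta_A+\delta_B)$, a synchronized pair so that $\mu_\Phi=\tfrac12(\delta_{(A,A)}+\delta_{(B,B)})$, with both bullets of the theorem then following by inspection of supports --- but the \emph{mechanism} you propose is the wrong one, and the one the paper actually builds is of a completely different nature.

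You reach for the Section~\ref{sec:twombe} machinery (``ratio bounded away from $1$'' / $\tau_n=\ln\ln T_{n,A}$ estimates, as in Propositions~\ref{prop:separ} and~\ref{prop:recurr}). That machinery is engineered to prove \emph{de}-synchronization: for two distinct polycycle orbits, the shrinking $\tau$-windows during which each sits in its $U_B$ become disjoint, so the pair is eventually \emph{never} in $U_B\times U_{\tilde B}$. What Theorem~\ref{thm:srb} needs is exactly the opposite: a.e.\ pair should eventually be near $A$ simultaneously and near $B$ simultaneously a full fraction of the time, i.e.\ the pair's empirical measure should \emph{concentrate on} the diagonal pair $\{(A,A),(B,B)\}$ rather than avoid $(B,B)$. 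A separation-of-phases estimate cannot deliver that, so the core of your argument, if pushed through, would prove a Theorem~\ref{thm:stat}-type statement about the Milnor/statistical attractor of the square, not convergence of Birkhoff averages for a.e.\ \emph{pair} to a single diagonal measure. Relatedly, nothing in your outline forces the single-orbit Birkhoff averages to \emph{converge}; you acknowledge this is the hard point and suggest driving it with an irrational rotation or a ``slow drift'', but this is left entirely to be designed.

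The paper sidesteps return maps altogether. The flow is a skew product on a cylinder $S^1\times[-1,1]$: the vertical component depends only on the vertical coordinate and slowly decelerates to zero at the bottom circle (which is a circle of fixed points), and the horizontal component is a one-parameter family of north--south circle fields $w_{h(\eta)}\ddt$, whose sink position $h(\eta)$ alternates between two angles $\t_l$ and $\t_r$ on vertical ``blocks'' of rapidly growing $\eta$-length, via a carefully chosen reparametrization $\xi=\sqrt{-\eta}$ and descent speed $\sigma(\xi)=1/g'(\xi)$ with $g(\xi)=e^{\sqrt{\xi}}$. Convergence of Birkhoff averages for a single orbit then comes from an explicit alternating-series estimate (Section~\ref{sec:srbphi}), not from ergodicity of a driving rotation. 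Synchronization of a pair comes from a much cruder but decisive fact: since the vertical speed depends only on the vertical coordinate and is monotone in it, the vertical $\eta$-gap $d(t)$ between the two orbits stays \emph{bounded}, while the $\eta$-lengths of the blocks grow linearly in the block index; hence eventually both orbits sit in the same block (hence near the same one of the two angles) for almost all of each block's duration. No return-map asymptotics, no $o(1)$ washing-out in logarithmic time, and no irrational rotation are involved. So the proposal, as a plan, has the right statement structure but a genuine gap: the synchronization device you invoke points in the wrong direction, and the actual construction (skew product over a decelerating fall, with a chosen deceleration rate making both the series estimate and the smooth extension to the boundary circle work) is missing.
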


Although formally the first statement of the theorem does not imply the second, we will have that the supports of~$\mu_\Phi$ and $\mu_{\varphi} \times \mu_{\varphi}$ are different, and it is known (see~\cite[Theorem~1]{GI96}) that when a map or a flow has a global natural measure, its support coincides with the minimal attractor, which yields the second statement.

We emphasize that we construct a single example of a flow that demonstrates our type of non-coincidence for the minimal attractors (and physical measures), or, at best, a very narrow class obtained by varying the parameters of the construction. In Section~\ref{sect:global} we will explain how to adapt this example to an arbitrary surface or higher-dimensional manifold. 

\subsection{Oscillating measures}

There is a trivial example of dynamics on a measure space for which the minimal attractor of the square is smaller than the square of the minimal attractor. Namely, let the phase space be the two-point set~$\{0, 1\}$; endow it with a $\delta$-measure at zero and let the dynamics simply switch the two points. It is easy to see that the minimal attractor of this system is the whole phase space while for the square map the set $\{(0, 1), (1, 0)\}$ always carries zero measure, and so is not a subset of the minimal attractor, and hence the latter is $\{(0, 0),\; (1, 1)\}$.

The idea is to realize something similar in smooth systems on manifolds. We are looking for a flow~$\varphi$ for which the image of the Lebesgue measure under~$\varphi^t$ oscillates, informally speaking, between two domains in the phase space: at some time the whole measure except a very tiny bit concentrates in the `left' part of the phase space; then this measure goes to the `right' part, leaving almost no measure on the left; then the measure comes back to the left, and so on (like in the trivial example above where the $\delta$-measure jumps from one point to another under the dynamics). The `tiny bit' of measure tends to zero with time, of course. In our example we will also have that the measure oscillates, in a sense, between two $\delta$-measures, but the `period' of `oscillations' will tend to infinity.

It seems that we are pointing at an interesting phenomenon here, but it is yet to be seen which formal definition properly captures its essence. In this paper we stick to the first definition that comes to mind.

\begin{defin}
We say that a flow $\varphi$ has \emph{oscillating measure} (or, more precisely, that the Lebesgue\footnote{As usual, we are assuming that our measure is finite and normalized to total measure one.} measure~$Leb$ is oscillating for the flow) if there are two disjoint domains $U_L, U_R$ in the phase space and two increasing sequences of moments of time $t = L_n$ and $t = R_n$, where $L_n, R_n \to +\infty$ as $n \to +\infty$, such that we have
$$(\varphi^{L_n}_*Leb)(U_L) \to 1, \quad (\varphi^{R_n}_*Leb)(U_R) \to 1 \quad \mbox{ as } n \to +\infty.$$
\end{defin}

\begin{rem}
The flow $\varphi$ that is constructed below has oscillating measure.
\end{rem}

\subsection{Outline of the construction}

Consider a finite cylinder $\Cyc = S^1 \times [-1, 1]$. We will assume, for the convenience of description, that in this direct product the circle is horizontal and the segment is vertical. We fix a Lebesgue probability measure~$\mu$ on~$\Cyc$ given by the form $\frac{1}{4\pi}d\t\wedge dz$, where $\t$ is an angle coordinate on the circle and~$z$ is the natural coordinate on the vertical segment. Our plan is to construct a flow on the open cylinder~$\Cyf = S^1 \times (-1, 1)$ and then extend the flow to the cylinder $\Cyc$. The lower boundary circle $S^1 \times \{-1\}$ will consist of fixed points of the extended flow. The closed cylinder can be smoothly embedded into any surface, and our flow can be extended to the whole surface if need be; see Section~\ref{sect:global} below.

The flow on $\Cyf$ is given by a vector field~$V$. The vertical component of~$V$ will be everywhere negative and will be constant on horizontal circles~$S^1 \times \{z\}$, so it can be specified by a function of the vertical coordinate.

\begin{figure}
\center{\includegraphics[width=1\linewidth]{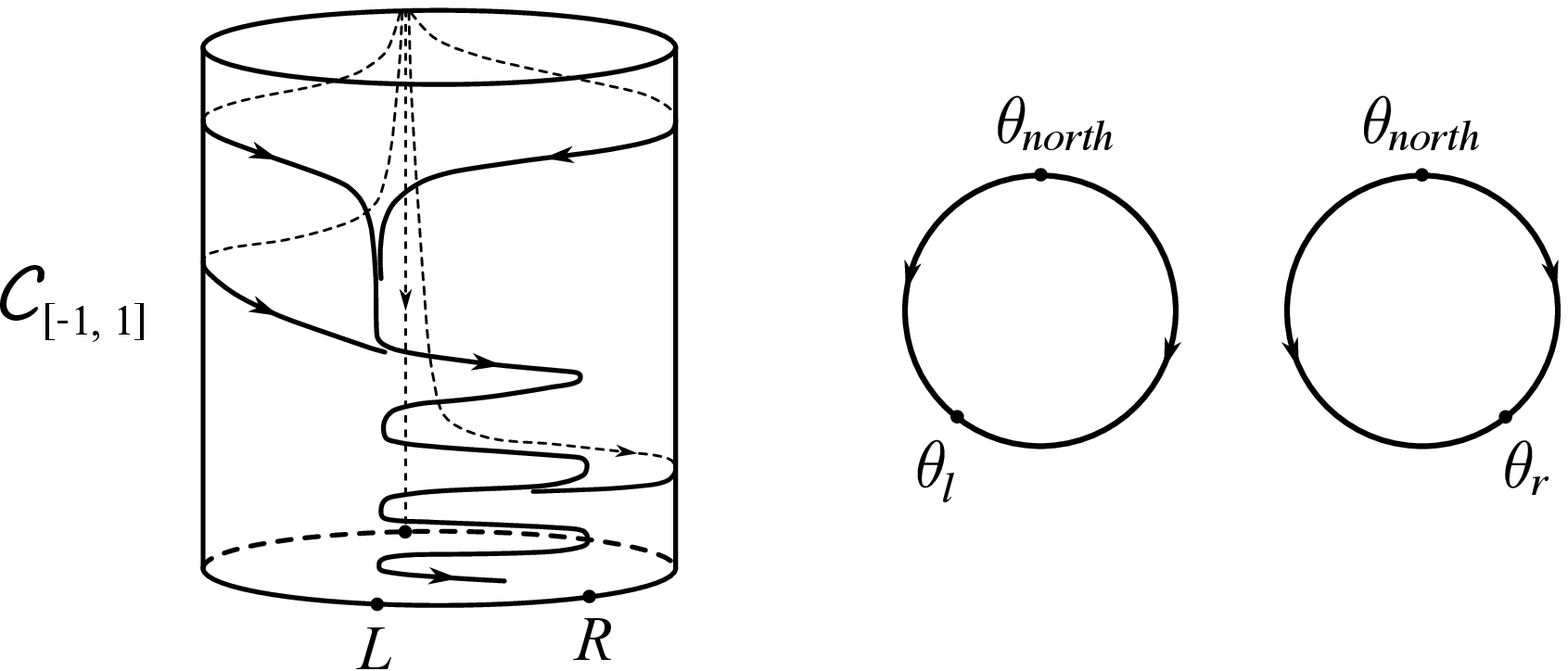}}
\caption{The phase portrait of the flow $\varphi$ on $\Cyc$ and phase portraits of vector fields~$w_{\t_l}\ddt$ and~$w_{\t_r}\ddt$ on~$S^1$.}\label{fig:cyl}
\end{figure}

To specify the horizontal component of~$V$, we consider a smooth one-parameter family~$(w_\alpha\ddt)_{\alpha \in I}$ (here the parameter space~$I$ is a segment) of vector fields on the circle with the following properties.
\begin{enumerate}
\item Each field is north-south, i.e., it has a single hyperbolic sink and a single hyperbolic source and every other point goes from the source to the sink.
\item \label{pr:2} The sources of the fields coincide, have angular coordinate~$\t_{north}$, and are contained in a repelling `northern' domain $N \subset S^1$.
\item The sink $s_\alpha$ of the field $w_\alpha\ddt$ has the angle coordinate $\t = \alpha$ (so, it is natural to identify the segment~$I$ with an arc of~$S^1$ that does not intersect the northern domain~$N$).
\item \label{pr:4} The sinks of the family are uniformly attracting in the complement of the northern domain: $(w_\alpha)'(\t) < -\kappa$, where $\t \in S^1 \setminus N$ is arbitrary and the constant~$\kappa > 0$ is independent of~$\alpha$.
\end{enumerate}

Consider a vertical coordinate~$\eta$ on the open cylinder~$\Cyf$ defined by the equality $\eta = \tan{\frac{\pi z}{2}}$. In a sense, this coordinate turns our finite open cylinder into an infinite cylinder. We will choose a smooth function $h \colon \RR \to I$ and a smooth function~$\rho$ of the vertical coordinate in terms of which the vector field~$V$ on $\Cyf$ will be written as
\begin{equation}\label{eq:v}
V(\t, \eta) = \rho(\eta)\cdot\left(w_{h(\eta)}\ddt - \frac{\partial}{\partial\eta}\right).
\end{equation}

One may think of it this way: we first construct a vector field with unit speed of descent and horizontal component governed by the function~$h$ and then multiply it by some mysterious function~$\rho$. The purpose of this multiplication by~$\rho$ is to make the field smoothly extendable to the lower boundary circle of~$\Cyc$. Note that since the vertical component of the vector field depends only on the vertical coordinate our example is a skew product.

In order to define the function $h$, first fix two points\footnote{In what follows, we will identify points of the circle and their angular coordinates.} $\t_l, \t_r$ outside of the northern domain~$N$ of the circle. We will refer to these as the left point and the right point respectively, thus the subscripts.

Our smooth function $h(\eta)$ must have the following properties.
\begin{enumerate}
\item For $\eta \ge 0$ we have $h(\eta) = \t_l$.
\item In the domain $\{\eta < 0\}$, there are segments separated by unit distance (in terms of~$\eta$) on which $h$ is constant and equal to~$\t_l$ or~$\t_r$. The segments where~$h$ equals~$\t_l$ and $\t_r$ alternate; odd segments correspond to the left point~$\t_l$.
\item\label{pr:h3} In-between these segments our function is monotone. It is the same on any such interim segment~$J$: we take some fixed bump function $b$ supported on a segment $[0, 1]$, consider the function $g(\eta) = \int_0^\eta b(s)ds$, and set $h|_J$ to be equal to this function multiplied by an appropriate constant and shifted appropriately. In particular, this guarantees that all derivatives of $h$ are bounded (but maybe by different constants).
\end{enumerate}

At this point the outline of our construction is given. The only freedom we now have with respect to~$h$ is in how we choose the lengths of the intervals where~$h$ is constant. However, we said nothing whatsoever about how to choose these intervals to make the construction work, that is, to make it have the required properties for the attractors and $SRB$-measures.

\subsection{Plan of the proof}

Now, when the outline of the construction is given, we should first explain what we are going to prove precisely.

One of our goals is to make sure that the flow~$\varphi$ on~$\Cyc$ has a global physical measure. This is going to be the measure $\mu_\varphi = \frac{1}{2}(\delta_{L} + \delta_{R})$ where $L = (\theta_l, \;-1)$ and $R = (\theta_r, \; -1)$.\footnote{Here the point $(\t_r, -1) \in S^1 \times [-1, 1] = \Cyc$ is at the lower boundary of the cylinder. By default we will use the original coordinates $(\t, z)$ on $\Cyc$ to specify points and subsets of the cylinder.}
The square flow~$\Phi$ will have a physical measure
$$\mu_\Phi = \frac{1}{2}(\delta_{(L, L)} + \delta_{(R, R)}).$$
As we mentioned above, a physical measure is also natural, if the phase space is compact. The support of $\mu_\Phi$ consists of two points while the support of $\mu_\varphi \times \mu_\varphi$ has four points, which yields both claims of Theorem~\ref{thm:srb}.

In order to show that $\mu_\varphi$ is indeed the global physical measure for~$\varphi$, we need to check that for any continuous function~$f \colon \Cyc \to \RR$ for Lebesgue-a.e. point $p \in \Cyc$ we have
\begin{equation}\label{eq:srbity}
\lim_{T \to +\infty} \frac{1}{T}\int_{0}^{T}f(\varphi^t(p))dt = \frac{1}{2}(f(L) + f(R)).
\end{equation}
The equality will hold, in fact, for any~$p$ that is not at the union of the two boundary circles and the vertical line $\{\t_{north}\} \times [-1, 1]$ where the sources of the horizontal component live. 
\begin{claim}
To prove that $\mu_\varphi$ is the global physical measure for~$\varphi$, it suffices to show that for any small $\e > 0$ the fraction of time that the positive orbit of~$p$ spends in the stripe ${[\t_l - \e, \t_l + \e]} \times {[-1, 1]}$ tends to $\frac{1}{2}$ and, likewise, the fraction of time it spends in ${[\t_r - \e, \t_r + \e]} \times [-1, 1]$ tends to $\frac{1}{2}$.
\end{claim}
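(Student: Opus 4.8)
The plan is to reduce the physical-measure statement~\eqref{eq:srbity} to the two claimed limits for the fractions of time in the vertical stripes $S_l^\e = [\t_l - \e, \t_l + \e]\times[-1,1]$ and $S_r^\e = [\t_r - \e, \t_r + \e]\times[-1,1]$. First I would observe that the vertical coordinate $z$ of $\varphi^t(p)$ is strictly decreasing (the vertical component of $V$ is everywhere negative on $\Cyf$) and tends to $-1$ as $t \to +\infty$, for any $p$ not on the upper boundary circle; hence the orbit eventually stays in any neighborhood $\{z < -1 + \delta\}$ of the lower boundary. Consequently, for the purpose of computing the Birkhoff average of a continuous $f$, only the values of $f$ on (a neighborhood of) the lower boundary circle matter, up to an error that goes to zero with $\delta$.

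The core of the argument is then a standard ``time-in-good-sets'' estimate. Fix a continuous $f$ and $\e' > 0$. By uniform continuity of $f$ on the compact cylinder, choose $\e > 0$ and $\delta > 0$ so small that $f$ varies by less than $\e'$ on each of the two boxes $B_l = [\t_l-\e,\t_l+\e]\times[-1,-1+\delta]$ and $B_r = [\t_r-\e,\t_r+\e]\times[-1,-1+\delta]$; in particular $|f - f(L)| < \e'$ on $B_l$ and $|f - f(R)| < \e'$ on $B_r$. Split the time interval $[0,T]$ according to whether $\varphi^t(p)$ lies in $B_l$, in $B_r$, or elsewhere. The contribution from $B_l$ is $f(L)\cdot(\text{fraction in }B_l) + O(\e')$, similarly for $B_r$, and the ``elsewhere'' contribution is bounded by $\|f\|_\infty$ times the fraction of time spent outside $B_l \cup B_r$. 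Using that the orbit spends eventually all its time in $\{z < -1+\delta\}$, the fraction of time in $B_l$ agrees asymptotically with the fraction of time in the full stripe $S_l^\e$, and likewise for $B_r$; so if the two stripe-fractions converge to $\tfrac12$, then the fraction outside $B_l \cup B_r$ converges to $0$, the $B_l$- and $B_r$-fractions converge to $\tfrac12$, and the Birkhoff average converges to $\tfrac12(f(L)+f(R)) + O(\e')$. Letting $\e' \to 0$ gives~\eqref{eq:srbity}, and since $\e'$ is arbitrary this proves $\mu_\varphi$ is the global physical measure on the announced full-measure set (the complement of the two boundary circles and the source line $\{\t_{north}\}\times[-1,1]$, the latter being excluded because the hypothesis of the claim is only asserted off that line).

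I would also record the easy direction that the claim's hypothesis is genuinely needed only on a full-measure set: the two excluded boundary circles have $\mu$-measure zero, and the vertical source line has $\mu$-measure zero as well, so nothing is lost. The main obstacle here is not this reduction, which is routine soft analysis, but rather \emph{establishing the hypothesis of the claim} — i.e.\ actually proving that a Lebesgue-generic orbit spends asymptotically half its time near $\t_l$ and half near $\t_r$ — which is exactly the content of the subsequent construction (the careful choice of the interval lengths in the definition of $h$, so that the north--south dynamics on the circle parametrized by $h(\eta)$ drags the $\t$-coordinate back and forth between $\t_l$ and $\t_r$ with the right asymptotic proportions). Within the proof of the claim itself the only mild subtlety is interchanging the ``orbit descends to the boundary'' fact with the stripe-fraction limits; this is handled cleanly by first choosing $\delta$ (hence a tail time $T_0(p,\delta)$ after which $z < -1+\delta$) and only then letting $T \to \infty$, so the finite initial segment $[0,T_0]$ contributes $O(T_0/T) \to 0$ to every average.
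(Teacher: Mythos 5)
Your proof is correct and matches the paper's argument essentially step for step: uniform continuity of $f$ to bound $|f-f(L)|$, $|f-f(R)|$ on small neighborhoods of $L$ and $R$, splitting $[0,T]$ into time near $L$, near $R$, and elsewhere, and using the stripe-fraction hypothesis to control the "elsewhere" part. The only cosmetic difference is that the paper discards the initial transient by replacing $p$ with a later point of its orbit, whereas you absorb it as an $O(T_0/T)$ term; these are interchangeable.
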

\begin{proof}[Proof of the claim]
Indeed, suppose that the fractions tend to~$\frac{1}{2}$ as time increases to $+\infty$. Given $\delta > 0$ we can choose $\e > 0$ so that for $q \in \Cyc$ we have $|f(q) - f(L)| < \delta$ when $\dist(q, L) < \e$ and also $|f(q) - f(R)| < \delta$ when $\dist(q, R) < \e$. Note that the limit on the left in~\eqref{eq:srbity} will not change if we replace~$p$ with another point of the same orbit, because this would be equivalent to shifting the lower and the upper integration limits by the same constant. So, let us replace~$p$ with its image whose vertical coordinate is close to $-1$. Then take some large~$T$ and split the integral from~$0$ to~$T$ into integrals over intervals where $\varphi^t(p)$ is $\e$-close to~$L$, intervals where $\varphi^t(p)$ is $\e$-close to~$R$, and intervals where $\varphi^t(p)$ is far from both~$L$ and~$R$. By our assumption, if $T$ is large, the intervals of the third type make a small fraction of the segment $[0, T]$ while the intervals of the first type have total length $\frac{1}{2}T(1+o(1))$ and the same holds for intervals of the second type. At the intervals of the first type the integrand is $\delta$-close to $f(L)$, at the intervals of the second type it is $\delta$-close to $f(R)$, and at the intervals of the third type it is bounded by the maximal value of $f$ on $\Cyc$. This yields an estimate of the form
$$\left|\frac{1}{T}\int_{0}^{T}f(\varphi^t(p))dt - \frac{1}{2}(f(L) + f(R))\right| < const \cdot \delta.$$
Hence, the limit in~\eqref{eq:srbity} exists and the equality itself holds.
\end{proof}
An analogous claim can be made about the physical measure of the square flow~$\Phi$ instead of the flow~$\varphi$.
  
\subsection{Intervals and descent speed}

We are now to choose the intervals where $h(\eta)$ is constant and also choose the function~$\rho$ of the vertical coordinate.

Let $h$ be equal to $\t_l$ on the first segment, which we denote by~$I_1$. Then let $I_2$ be the union of the interim unit segment where the value of~$h$ changes from~$\t_l$ to~$\t_r$ and the adjacent segment where it equals~$\t_r$. The rest of the segments~$I_n$ are defined analogously; adjacent segments~$I_n, I_{n+1}$ intersect only by their endpoints. The segments form a partition of the $z$-interval $(-1, 0]$; in terms of~$\eta$ this is interval~$\{\eta \le 0\}$.

To define the lengths of the segments $I_n$, we specify the vertical coordinate~$\xi$ on $(-1, 0)$ in which they all look like segments of length one: $\xi = \sqrt{-\eta} = \sqrt{-\tan{\frac{\pi z}{2}}}.$ Note that the $\xi$-axis is directed downwards.

Now we specify the speed of descent in the same coordinate~$\xi$. More formally,  we denote by~$\sigma(\xi)$ the $\frac{\partial}{\partial\xi}$-component of the vector field~$V$ written in the $(\frac{\partial}{\partial\t}, \frac{\partial}{\partial\xi})$ basis. Consider an auxiliary function $g(\xi) = e^{\sqrt{\xi}}$. We define $\sigma$ by the following equality:
$$\sigma(\xi) = \frac{1}{g'(\xi)}.$$

Note that by specifying $\sigma(\xi)$ we also define $\rho(\eta)$. In our skew product system, the vertical movement is governed by an equation $\dot\xi = \sigma(\xi)$ in the~$\xi$-coordinate. Switching back to coordinate~$\eta = -\xi^2$ we get
$$\dot \eta = -2\xi\dot\xi = -2\xi\sigma(\xi) = -2\sqrt{-\eta} \cdot \sigma(\sqrt{-\eta}) = -4(-\eta)^{3/4}\cdot e^{-\sqrt[4]{-\eta}}.$$

But according to~\eqref{eq:v} the right hand side here must be equal to $-\rho(\eta)$, so we get
\begin{equation}\label{eq:expformula}
\rho(\eta) = 4(-\eta)^{3/4}\cdot e^{-\sqrt[4]{-\eta}}.
\end{equation}

Unfortunately, this function is not smooth at zero, so let us modify it at the $\eta$-interval $(-1, 0)$ in such a way that it becomes equal to~$1$ and flat at zero and so can be smoothly extended by the constant~$1$ to the domain $\eta > 0$. To simplify calculations in the following section, we will assume that the function~$\sigma(\xi)$ is modified on~$(0,1)$ accordingly and, moreover, that this modification leaves the integral $\int_0^1 \frac{1}{\sigma(\xi)} d\xi$ unchanged (i.e., equal to~$e - 1$).

At this point the flow $\varphi$ is finally defined. It is clear from the first property of function~$h(\eta)$ (this function is constant for~$\eta > 0$) that the vector field extends smoothly to the upper boundary circle of the cylinder~$\Cyc$. However, we are yet to check that it extends smoothly to the lower circle. We will do this in Section~\ref{sec:smooth}.

\subsection{Physical measure of \texorpdfstring{$\varphi$}{phi}}
\label{sec:srbphi}
Consider an arbitrary point of~$\Cyf$ with angular coordinate different from $\t_{north}$. Denote by $p$ the point of the same orbit at the zero-circle $S^1 \times \{0\}$ and set $p(t) = \varphi^t(p)$. By the \emph{claim} above, it suffices to estimate the difference between $\frac{1}{2}$ and the fraction of time that $p(t)$ spends in a thin vertical stripe that contains the point~$L$ or the point~$R$.

Fix some small $\e > 0$ and denote the stripe ${[\t_l - \e, \t_l + \e]} \times {[-1, 1]}$ by~$S_L$ and the stripe ${[\t_r - \e, \t_r + \e]} \times {[-1, 1]}$ by~$S_R$.

Let us call the sets of the form $S^1 \times I_n$ \emph{blocks}. For the $n$-th block with odd~$n$, denote by~$\alpha_n$ the following ratio: the numerator is the time our orbit spends outside of the stripe~$S_L$ while passing through the block, and in the denominator we have the whole time it takes for our orbit to cross the block. If~$n$ is even, $\alpha_n$ is defined analogously with~$S_L$ replaced by~$S_R$.

\begin{prop}\label{prop:alpha}
We have $\alpha_n \to 0$ as $n \to +\infty$.
\end{prop}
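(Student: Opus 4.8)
The plan is to track a single orbit as it crosses the $n$-th block $S^1 \times I_n$ and to show that the fraction of the crossing time spent away from the relevant vertical stripe becomes negligible. Fix $n$ odd (the even case is identical with $\theta_l$ replaced by $\theta_r$); on the block $I_n$ the horizontal field is $w_{h(\eta)}\frac{\partial}{\partial\t}$ with $h(\eta)$ first sweeping, on the single interim unit $\eta$-segment, from $\t_r$ (the value on $I_{n-1}$) to $\t_l$, and then staying equal to $\t_l$ on the remaining part of $I_n$. In the $\xi = \sqrt{-\eta}$ coordinate the block has unit length, but in the $\eta$ coordinate its length is $\xi_n^2 - \xi_{n-1}^2 = (\xi_{n-1}+1)^2 - \xi_{n-1}^2 = 2\xi_{n-1}+1 \to +\infty$, where $\xi_n = \sqrt{-\eta}$ at the bottom of $I_n$, so $\xi_n = n + \text{const}$. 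Hence the interim unit-$\eta$-segment where $h$ is non-constant occupies, in the $\xi$-coordinate, only an interval of length $O(1/\xi_n) = O(1/n)$ near the top of the block, while the bulk of the block (the part where $h \equiv \t_l$) has $\xi$-length $1 - O(1/n)$.

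First I would estimate the total $\xi$-time $T_n$ to cross the block: since $\dot\xi = \sigma(\xi) = 1/g'(\xi)$ with $g(\xi) = e^{\sqrt\xi}$, the time is $T_n = \int_{\xi_{n-1}}^{\xi_n} \frac{d\xi}{\sigma(\xi)} = \int_{\xi_{n-1}}^{\xi_n} g'(\xi)\,d\xi = g(\xi_n) - g(\xi_{n-1}) = e^{\sqrt{\xi_n}} - e^{\sqrt{\xi_{n-1}}}$, which grows (super-exponentially in $n$, in fact). Next, within the block, the orbit reaches the sub-block where $h \equiv \t_l$ after a $\xi$-time of at most $\int_{\xi_{n-1}}^{\xi_{n-1}+O(1/n)} g'(\xi)\,d\xi = O\!\big(\tfrac{1}{n} g'(\xi_n)\big) = O\!\big(\tfrac{1}{n}\, \xi_n^{-1/2} e^{\sqrt{\xi_n}}\big)$, which is a vanishing fraction of $T_n$. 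So up to a negligible fraction of the crossing time we may assume the orbit enters the region $h \equiv \t_l$ with some angular coordinate $\t$; I then want to show it is swept $\e$-close to $\t_l$ in a time that is also a negligible fraction of $T_n$, and stays there.

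The sweeping estimate is where property~\ref{pr:4} of the family (uniform attraction, $(w_\alpha)'(\t) < -\kappa$ off the northern domain $N$) does the work: as long as the orbit's angular coordinate stays outside $N$, the deviation $|\t(t) - \t_l|$ decays at least like $e^{-\kappa s}$ in the \emph{reparametrized} time $s$ obtained by integrating the horizontal speed factor $\rho(\eta)$ along the orbit. So I need a lower bound on the accumulated $\rho$-weighted time spent in the $h\equiv\t_l$ part of the block; translating $\rho\,dt$ into $d\xi$ via $dt = d\xi/\sigma(\xi)$ and the relation between $\rho$ and $\sigma$, the relevant quantity is again controlled by $g(\xi_n) - g(\xi_{n-1} + O(1/n)) = T_n(1 - o(1))$, which diverges. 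Hence after a \emph{real} time that is $o(T_n)$ the orbit is within $\e$ of $\t_l$, and by the same monotone contraction it remains within $\e$ of $\t_l$ for the rest of the block. Adding up: the time spent off the stripe $S_L$ during the block is $O(1/n)\cdot T_n$ (from the interim segment) plus $o(T_n)$ (from the initial approach to $\t_l$), so $\alpha_n \to 0$. One technical point to handle: orbits starting near the source line $\t = \t_{north}$ could linger in $N$; but since the orbit of $p$ was chosen with $\t \neq \t_{north}$, after the first block it has moved to (and will stay pinned near) $\t_l$ or $\t_r$, both of which lie off $N$ by assumption, so in all blocks with $n$ large the orbit is well away from $N$ when $h$ stabilizes, and the uniform bound $\kappa$ applies throughout the relevant portion.

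The main obstacle I anticipate is bookkeeping the three different time parametrizations ($\xi$-arclength, real time $t$, and the $\rho$-reweighted "horizontal" time $s$) and verifying that the interim unit-$\eta$-segment really is a vanishing fraction of the block in real time — this reduces to the computation $g(\xi_{n-1} + 1/\xi_n) - g(\xi_{n-1}) = o\big(g(\xi_n) - g(\xi_{n-1})\big)$, which holds because $g$ is eventually convex and the jump $\xi_{n-1}\to\xi_n$ has unit size while the interim piece has size $O(1/n)$. Everything else is a routine Grönwall-type contraction argument using property~\ref{pr:4}.
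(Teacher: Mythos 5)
Your argument is correct, and its heart---using property~\ref{pr:4} to show that, in the reparametrized (unit-$\eta$-descent) time, the orbit gets $\e$-close to the sink after a bounded time $\tfrac{1}{\kappa}\log\tfrac{2\pi}{\e}$, so that the portion of $I_n$ spent off the stripe has $\eta$-length bounded by a constant while $I_n$ has $\eta$-length $2n-1$---is exactly the paper's. Where the two arguments part ways is in converting the $\eta$-length ratio into a time ratio. You compute crossing times explicitly through $g(\xi)=e^{\sqrt\xi}$ and compare increments of $g$; this works, but entails precisely the three-parametrization bookkeeping you flag as the main obstacle. The paper avoids all of it with a one-line monotonicity observation: the ``bad'' portion $J_n$ sits at the \emph{top} of the block, and $\rho(\eta)$ (the vertical speed) decreases monotonically with depth, so the orbit moves uniformly faster across $J_n$ than across $I_n\setminus J_n$; hence the ratio of times is bounded above by the ratio of $\eta$-lengths, which is $O(1/n)$, with no explicit formula for $g$ or $\sigma$ needed. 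One small slip worth fixing in your write-up: what you call ``the relevant quantity,'' $g(\xi_n)-g(\xi_{n-1}+O(1/n))=T_n(1-o(1))$, is the \emph{real} time spent in the $h\equiv\t_l$ portion, not the accumulated $\rho$-weighted time; using $\rho\,dt=2\xi\,d\xi$ the latter is $\int 2\xi\,d\xi$ over that range, i.e.\ its $\eta$-length $\sim 2n$. Both quantities do diverge, and your conclusion (the approach phase costs $o(T_n)$ of real time) is right, but the correct accounting is that a bounded $\rho$-weighted budget occupies a $\xi$-interval of length $O(1/n)$ at the top of the block, hence real time $O\bigl(g'(\xi_{n-1})/n\bigr)=o(T_n)$.
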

\begin{proof} It may be that the point $p$ is in the northern stripe~$N \times [-1, 1]$ and it takes very long time before it gets out of there, but this happens eventually and as soon as it does the point never goes back to~$N \times [-1, 1]$.

Consider an interval $I_n$ with~$n$ so large that, when~$p(t)$ gets to the corresponding block, it is already out of the northern stripe. Each~$I_n$ with $n > 1$ starts with an interim segment that has length one in coordinate~$\eta$. When the orbit of~$p$ enters the $n$-th block, it takes some time first to cross the cylinder that corresponds to the interim interval. When this is done, the value of function~$h$ becomes equal to $\t_l$ or~$\t_r$ for the rest of the block. Assume that~$n$ is odd and the value is equal to~$\t_l$. Then the horizontal component of the field has a sink at~$\t_l$ and so the orbit of $p$ is attracted to the stripe~$S_L$. It takes some time for the orbit of $p$ to get into the stripe~$S_L$. Recall that in the $(\t, \eta)$-coordinates the vector field~$V$ has the form~\eqref{eq:v}. It is easier to first write an estimate on the vertical segment the point passes before it gets into~$S_L$. For this purpose, let us divide the vector field in~\eqref{eq:v} by~$\rho(\eta)$ and consider
\[\hat{V} = w_{h(\eta)}\ddt - \frac{\partial}{\partial\eta}.\]
For this vector field we have unit speed of descent and horizontal component~$w_{h(\eta)}\ddt$. Property~\ref{pr:4} of the family $(w_\alpha\ddt)_\alpha$ implies that the time it takes for a point of a circle that starts outside of the northern domain to get $\e$-close to the horizontal sink at~$\t_l$ is at most~$\frac{1}{\kappa}\log\frac{2\pi}{\e}$, but then we have the same estimate on the length of the vertical segment the point crosses before its orbit gets into~$S_L$. The exact form of this estimate is not important: it is just a number that depends on~$\e$ but not on~$n$. The important part is that the estimate on the length of the vertical segment stays valid for the field~$V$. Denote by~$J_n \subset I_n$ the upper part of the segment~$I_n$ that ends when $p(t)$ finally gets into~$S_L$. 

Now, the $\xi$-coordinates of the endpoints of~$I_n$ are~$n-1$ and~$n$. The corresponding $\eta$-coordinates are~$-(n-1)^2$ and~$-n^2$, so the length of this segment in the $\eta$-coordinate is $2n-1$, which grows linearly with~$n$. The ratio between the length of the vertical segment $J_n$ (its length is at most~$1 + \frac{1}{\kappa}\log\frac{2\pi}{\e}$) and the length of~$I_n$ converges to zero as $n \to +\infty$.

We are interested, however, not in the ratio of lengths of vertical segments but in the ratio of times it takes for our orbit to cross them. For the vector field~$V$, the speed of vertical movement in the $\eta$-coordinate is $-\rho(\eta)$, and the value of $\rho$ decreases monotonically to zero for large values of~$\eta$, so the absolute speed is always greater in the upper part of the cylinder than in the lower; therefore the ratio of times is even smaller than the ratio of lengths, and it also converges to zero.

The same argument works for even~$n$.
\end{proof}

Let $t_n$ be the moment of time when~$p(t_n)$ leaves the $n$-th block and let~$t_0 = 0$. Denote by $\chi_l(n)$ (resp.~$\chi_r(n)$) the fraction of the segment of time~$[t_0, t_n]$ that~$p(t)$ spends in the stripe~$S_L$ (resp.~$S_R$). We will prove that the difference $\chi_r(n) - \chi_l(n)$ converges to zero. Proposition~\ref{prop:alpha} implies that $\chi_r(n) + \chi_l(n) \to 1$, so convergence of the difference to zero will yield that $\chi_l(n) \to \frac{1}{2}$ and $\chi_r(n) \to \frac{1}{2}$, which is what we want to prove.

First let us find $t_n$. Since the vertical movement is governed by equation $\dot\xi = \sigma(\xi)$ and the moment $t_n$ corresponds to the $\xi$-coordinate being equal to~$n$, we have
$$t_n = \int_0^n \frac{1}{\sigma(\xi)} d\xi = e^{\sqrt{n}} - 1.$$

Recall that, for the $k$-th block, our orbit spends time $(t_k - t_{k-1})\cdot(1 - \alpha_k)$ in~$S_L$ if~$k$ is odd or in~$S_R$ if~$k$ is even. Then we have that
$$\chi_r(n) - \chi_l(n) = \frac{1}{t_n - t_0}\sum_{k=1}^n (-1)^k \cdot (t_k - t_{k-1})\cdot(1 - \alpha_k) = $$
$$= \underbrace{\frac{1}{t_n} \sum_{k=1}^n (-1)^k \cdot (t_k - t_{k-1})}_{E_1}
 + \underbrace{\frac{1}{t_n} \sum_{k=1}^n \alpha_k \cdot (-1)^{k+1} \cdot (t_k - t_{k-1})}_{E_2}.$$

Consider expression~$E_1$ first. Since the difference $t_k - t_{k-1} = e^{\sqrt{k}} -e^{\sqrt{k-1}}$ grows with~$k$, the value of~$E_1$ is negative for odd~$n$ and positive for even~$n$. Moreover, the sum in~$E_1$ does not exceed its last term in absolute value, so we have
$$|E_1| \le \frac{t_n - t_{n-1}}{t_n} = \frac{e^{\sqrt{n}} - e^{\sqrt{n-1}}}{e^{\sqrt{n}} - 1} = \frac{1 - e^{\sqrt{n-1} - \sqrt{n}}}{1 - e^{-\sqrt{n}}} = \frac{1 - e^{\frac{-1}{\sqrt{n-1} + \sqrt{n}}}}{1 - e^{-\sqrt{n}}} \to 0 \quad (n \to +\infty).$$

Now consider~$E_2$. Recall that~$\alpha_k \to 0$. Fix some small~$\delta > 0$ and split the sum in~$E_2$ into a sum over~$k$ for which $\alpha_k \ge \delta$ and a sum for which~$\alpha_k < \delta$. The first sum is bounded and we divide it by $t_n$, so the ratio converges to zero as~$n \to +\infty$. The modulus of the second sum is smaller than~$\delta\sum_0^n(t_k - t_{k-1}) = \delta t_n$, and so the ratio with~$t_n$ is smaller than~$\delta$. Since~$\delta$ was arbitrary, we get that $E_2 \to 0$ as $n \to +\infty$.

We have established that $\chi_r(n) - \chi_l(n) \to 0$. For a given $\xi > 0$ let $\chi_r(\xi)$ be the fraction of the time interval~$[t_0, t_\xi]$ that~$p(t)$ spends in~$S_R$, where~$t_\xi$ is the moment when $p(t_\xi)$ has $\xi$-coordinate equal to~$\xi$. Define~$\chi_l(\xi)$ analogously. It is straightforward to show that~$|\chi_r(\xi) - \chi_r([\xi])| \to 0$ as~$\xi \to +\infty$ (or, equivalently, as~$t_\xi \to +\infty$) and that the same holds for~$\chi_l(\xi)$. This implies that both~$\chi_r(\xi)$ and~$\chi_l(\xi)$ converge to~$\frac{1}{2}$, but by the \emph{claim} this proves that~$\mu_\varphi$ is the physical measure for~$\varphi$.


\subsection{Physical measure of \texorpdfstring{$\Phi$}{Phi}}
\label{sec:srbPhi}

In this section we prove that the measure~$\mu_\Phi = \frac{1}{2}(\delta_{(L, L)} + \delta_{(R, R)})$ is physical for the flow~$\Phi$. In order to do that, it suffices to prove the following proposition, where for~$A \subset \Cyc$ we write~$A^2$ as a shorthand for~$A \times A \subset \Cyc \times \Cyc$. 

\begin{prop}
For a Lebesgue-a.e. pair~$(p, q) \in \Cyf^2$ and arbitrary $\e > 0$, the fraction of time that a segment~$\{\Phi^t(p, q)\}_{t \in [0, T]}$ of the positive orbit has spent in the set
\[S_{LL} = S_L^2 = ([\t_l - \e, \t_l + \e] \times [-1, 1])^2\]
tends to $\frac{1}{2}$ as $T \to +\infty$, and the fraction of time it has spent in
\[S_{RR} = S_R^2 = ({[\t_r - \e, \t_r + \e]} \times [-1, 1])^2\]
tends to $\frac{1}{2}$ as well.
\end{prop}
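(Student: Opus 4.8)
The plan is to reduce this two-factor statement to the one-factor analysis of Section~\ref{sec:srbphi}. Write $(p(t),q(t)) = \Phi^t(p,q)$. Discarding a set of pairs of measure zero, we may assume that neither $p$ nor $q$ lies on the northern line $\{\t_{north}\}\times(-1,1)$; each orbit then leaves the northern stripe $N\times[-1,1]$ after a finite time (as in the proof of Proposition~\ref{prop:alpha}) and never returns, and since discarding a finite initial segment of time does not affect Birkhoff averages we may assume from the outset that both orbits are already below $z=0$ and outside $N\times[-1,1]$. The flow is a skew product whose vertical component obeys the autonomous one-dimensional equation $\dot\xi = \sigma(\xi)$ with $\sigma>0$, so the vertical coordinates $\xi_p(t),\xi_q(t)$ of $p(t)$ and $q(t)$ are two solutions of this equation and hence differ by a time shift: there is a constant $\tau=\tau(p,q)\in\RR$ with $\xi_q(t)=\xi_p(t+\tau)$ for all $t\ge 0$. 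Swapping $p$ and $q$ if necessary (which leaves the symmetric sets $S_{LL}$ and $S_{RR}$ invariant), we assume $\tau\ge 0$.

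Keeping the block notation of Section~\ref{sec:srbphi}, let $a_n$ be the moment the orbit of $p$ enters the $n$-th block $S^1\times I_n$ (so $\xi_p(a_n)=n-1$), put $T_n = a_{n+1}-a_n$, and recall that for large $n$ one has $T_n = e^{\sqrt n}-e^{\sqrt{n-1}}\to+\infty$ while $T_n/a_{n+1}\to 0$. Since $\xi_q(t)=\xi_p(t+\tau)$, the orbit of $q$ passes every level $\tau$ units of time before the orbit of $p$; in particular $q$ enters and leaves the $n$-th block exactly $\tau$ units of time earlier than $p$ does. As $\tau<T_n$ for $n$ large, it follows that throughout the interval $[a_n,a_{n+1}]$ during which $p$ is in the $n$-th block, the point $q$ is in the $n$-th block during $[a_n,a_{n+1}-\tau]$ and in the $(n+1)$-st block during $[a_{n+1}-\tau,a_{n+1}]$. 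The crucial point is that the length $\tau$ of this mismatched subinterval is a constant independent of $n$, hence a vanishing fraction $\tau/T_n$ of the $n$-th block.

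Fix a large odd $n$; the even case is identical with $S_L,S_{LL}$ replaced by $S_R,S_{RR}$. During $[a_n,a_{n+1}-\tau]$ both orbits run through the $n$-th block, on which $h\equiv\t_l$ away from the leading interim segment, and Proposition~\ref{prop:alpha}, applied to each of the two orbits separately, shows that each of them spends outside the stripe $S_L$ at most an $\alpha_n$-fraction of the block, with $\alpha_n\to 0$. Hence, of the full block-$n$ interval $[a_n,a_{n+1}]$ of length $T_n$, the pair $(p(t),q(t))$ lies in $S_{LL}=S_L\times S_L$ for all but at most $2\alpha_n T_n+\tau$ units of time. Writing $\beta_n = 2\alpha_n + \tau/T_n\to 0$, the fraction of the $n$-th block that the $\Phi$-orbit of $(p,q)$ spends in $S_{LL}$ (for odd $n$), respectively in $S_{RR}$ (for even $n$), equals $1-\beta_n$. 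From here one repeats the summation argument of Section~\ref{sec:srbphi}: letting $\chi_{LL}(n)$ and $\chi_{RR}(n)$ denote the fractions of $[0,a_{n+1}]$ spent by the $\Phi$-orbit in $S_{LL}$ and $S_{RR}$, one has
\[
\chi_{RR}(n)-\chi_{LL}(n) = \frac{1}{a_{n+1}}\sum_{k=1}^{n}(-1)^k T_k(1-\beta_k) + o(1),
\]
whose right-hand side is controlled exactly like the sum $E_1+E_2$ there: the alternating sum of the increasing numbers $T_k$ is bounded in absolute value by $T_n = o(a_{n+1})$, and the contribution of the $\beta_k$'s tends to $0$ since $\beta_k\to 0$. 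Thus $\chi_{RR}(n)-\chi_{LL}(n)\to 0$, while $\chi_{LL}(n)+\chi_{RR}(n) = 1 - a_{n+1}^{-1}\sum_{k=1}^{n}\beta_k T_k\to 1$, whence $\chi_{LL}(n),\chi_{RR}(n)\to\tfrac12$. Interpolating between the block endpoints $a_{n+1}$ as in Section~\ref{sec:srbphi} upgrades this to convergence along all $T\to+\infty$, which is the assertion of the proposition.

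I expect the main obstacle — and the heart of the construction — to be the observation in the second paragraph: even though $p$ and $q$ start at different heights and their vertical motions are merely conjugate by a fixed time shift $\tau$, the explosive growth $T_n\to+\infty$ of the block durations forces the two coordinates to sit in blocks of the same parity for all but a $\tau/T_n\to 0$ fraction of each block, so the desynchronization of the two factors is asymptotically invisible. Once this is in place, the rest is a cosmetic modification of the single-factor estimates, with the error term $\alpha_n$ of Proposition~\ref{prop:alpha} replaced by $\beta_n = 2\alpha_n + \tau/T_n$. The only other points requiring (routine) care are the handling of the northern exceptional set and of the initial transients, and the precise interpolation between integer block endpoints, all of which are dealt with exactly as in Section~\ref{sec:srbphi}.
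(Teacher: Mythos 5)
Your proof is correct and follows essentially the same strategy as the paper: bound the vertical desynchronization of $p(t)$ and $q(t)$, show it becomes a negligible fraction of each block because block lengths diverge, and then fold this into the single-orbit analysis of Section~\ref{sec:srbphi}. Your formulation via the fixed time shift $\tau$ (exploiting that the vertical ODE is autonomous) is a slightly cleaner way of expressing the bounded-distance claim $d(t)\le D$ that the paper uses, but it leads to the same $\beta_n\to 0$ and the same conclusion.
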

\begin{proof}
The condition on $(p, q)$ is that neither $p$ nor~$q$ have $\t$-coordinate equal to $\t_{north}$. We may freely replace the pair~$(p, q)$ with~$\Phi^t(p, q)$ for some $t$, so we will assume that the point~$p$ is at the zero-circle $S^1 \times \{0\} \subset \Cyc$. We will also assume WLOG that~$q$ is at this circle or above it.

Denote the orbits of~$p$ and~$q$ by $p(t)$ and~$q(t)$. Switch to the vertical coordinate~$\eta$ and denote the vertical distance in~$\eta$ between $p(t)$ and~$q(t)$ by~$d(t)$. Then~$d(t)$ is a bounded function of~$t > 0$. Indeed, the case when~$p$ and~$q$ start at the same vertical level is trivial. If~$q$ is higher than~$p$, we argue that the speed of vertical descent of a given point depends only on the vertical coordinate and monotonically decreases to zero as $\eta\to -\infty$, starting from some level~$\eta_0$; so after both orbits cross this vertical level, the point~$q(t)$, which is higher and therefore moves downwards faster, begins to catch up with~$p(t)$ in terms of the~$\eta$-coordinate, and~$d(t)$ starts to decrease. Thus, we can assume that~$d(t)$ is bounded by a constant~$D$.

Consider $n$ so large that the $\eta$-length of~$I_n$ is greater than~$D$. Say, $n$ is odd. Denote the $n$-th block by~$B_n$ and let the number~$\beta_n$ be the ratio between the time during which $p(t)$ is in~$B_n$, but $(p(t), q(t)) \notin (B_n \cap S_L)^2$, and the whole time during which $p(t) \in B_n$. If~$n$ is even, define $\beta_n$ analogously, but with~$S_{L}$ replaced by~$S_{R}$.

Arguing as in Section~\ref{sec:srbphi}, we have that $\beta_n \to 0$ as $n \to +\infty$. Indeed, the vertical distance that the first orbit~$p(t)$ crosses between the moment when it enters the~$n$-th block and the moment when both~$p(t)$ and~$q(t)$ are in~$S_L$ is bounded. Thus, the ratio of this distance to the length of the block tends to zero, and the ratio of the corresponding time segments is even smaller. We also know from Section~\ref{sec:srbphi} that the fraction of the time interval~$[0, T]$ that the orbit~$p(t)$ spends in the left stripe~$S_L$ (or right stripe~$S_R$) converges to~$\frac{1}{2}$ as~$T \to +\infty$. Convergence of $\beta_n$ to zero implies then that the fraction of~$[0, T]$ that both~$p(t)$ and~$q(t)$ spend in~$S_L$ (resp.~$S_R$) tends to~$\frac{1}{2}$. But this is the same as saying that~$\Phi^t(p, q)$ asymptotically spends half of the time in~$S_{LL}$ (or~$S_{RR}$), and the proposition is proven.
\end{proof}


\subsection{The flow \texorpdfstring{$\varphi$}{phi} is smooth}\label{sec:smooth}
In this section we will check that the vector field~$V$ defined in~$\Cyf$ provides a smooth vector field in~$\Cyc$ when it is extended by continuity to boundary circles. As we remarked above, it is trivial that the extended vector field is smooth near the upper boundary circle; so we focus on the lower one.

Consider a new vertical coordinate~$\zeta$ defined in the upper semi-neighborhood of $-1$ in the original $z$-coordinate:
\[\zeta = -\frac{1}{\eta} = \frac{1}{\xi^2} = -\cot \frac{\pi z}{2}.\]
The map $z \mapsto \zeta(z) = -\cot \frac{\pi z}{2}$ is a $C^\infty$-diffeomorphism of a neighborhood of $-1$ onto the image, so $\zeta$ is a well-defined smooth coordinate.

In the coordinates $(\t, \zeta)$ the vector field~$V$ has the form
\[V(\t, \zeta) = \rho(-1/\zeta)\left(w_{h(-1/\zeta)}(\t)\ddt - \zeta^2\frac{\partial}{\partial\zeta}\right).\]

Denote $\trho(\zeta) = \rho(-1/\zeta)$. Then in a semi-neighborhood of zero in the $\zeta$-coordinate we have, by~\eqref{eq:expformula},
\[\trho(\zeta) = 4\zeta^{-3/4}\cdot e^{-\zeta^{-1/4}}.\]
This function is flat at zero together with all its derivatives. Now it is obvious that the vertical component of the vector field~$V(\t, \zeta)$, namely $-\zeta^2\trho(\zeta)$ is smooth in a neighborhood of the lower boundary circle.

The horizontal component is~$\trho(\zeta) \cdot w_{h(-1/\zeta)}(\t)$. For convenience, let us write $f(\t, \zeta)$ instead of $w_{h(-1/\zeta)}(\t).$ Since~$\trho(\zeta)$ is flat at zero and~$f(\t, \zeta)$ is bounded, the horizontal component is flat at the lower boundary circle. However, it is not entirely obvious that it is smooth there. In other words, we are to check that all derivatives of the horizontal component have zero limit at $\zeta = 0$. We present the argument for derivatives in~$\zeta$. The argument for a derivative $\frac{\partial^{n+k}}{\partial\zeta^n\partial\t^k}$ can be obtained by formally replacing every instance of~$f$ by~$\frac{\partial^k{f}}{\partial\t^k}$.

Applying the Leibniz rule we write
\[ \left(\ddzeta\right)^n(\trho \cdot f)= \sum_{j=0}^n \binom{n}{j} \left(\ddzeta\right)^j\trho \cdot \left(\ddzeta\right)^{n-j}f. \]
Here $\left(\ddzeta\right)^j\trho$ is smooth and flat at zero, but $\left(\ddzeta\right)^{n-j}f$ is not bounded near zero. However, the latter factor is majorized by some negative power of~$\zeta$ in a neighborhood of zero. Indeed,
\begin{equation}\label{eq:deriv}
\ddzeta f(\t, \zeta) = \ddzeta(w_{h(-1/\zeta)}(\t)) = (w_\alpha)'_\alpha(\t)|_{\alpha = h(-1/\zeta)} \cdot h'(-1/\zeta) \cdot \frac{1}{\zeta^2}.
\end{equation}
The family $(w_\alpha)_{\alpha \in I}$ is smooth and $I$ is compact, so all possible derivatives of $w_\alpha$ are bounded (by different constants). Likewise, by property~\ref{pr:h3} of~$h$ all derivatives of~$h$ are bounded. So, on the right in~\eqref{eq:deriv} we have a product of two bounded functions and a negative power of~$\zeta$. Moreover, as we differentiate this expression in~$\zeta$ finitely many times to calculate~$\left(\ddzeta\right)^{n-j}f$, we always get a sum of terms each of which is a product of bounded functions and a negative power of~$\zeta$. Hence, $\left(\ddzeta\right)^{n-j}f$ is majorized by some negative power of~$\zeta$ and when multiplying it by a flat~$\left(\ddzeta\right)^j\trho$ we get a function that is flat at zero. This shows simultaneously that the derivatives~$\left(\ddzeta\right)^n(\trho \cdot f)$ are defined when $\zeta = 0$ and that they are continuous at the lower boundary circle. As we mentioned above, the argument for the rest of the derivatives is the same.

At this point the proof of Theorem~\ref{thm:srb} is complete.

\subsection{Remarks on \texorpdfstring{$\rho$}{rho} and \texorpdfstring{$\xi$}{xi}}
The function $\rho$ and the coordinate $\xi$ (in which the segments~$I_n$ have constant length) are rather particular, so it is worthwhile to explain how those have been chosen.

From the coordinate $\xi$ we want only one thing: we want it to be related to coordinate $\eta$ in such a way that the lengths of~$I_n$ monotonically increase to~$+\infty$ when measured in~$\eta$. Then we have that the ratios between $J_n$ and $I_n$  go to zero, which is one of the key points in Sections~\ref{sec:srbphi} and~\ref{sec:srbPhi}.

As for the function~$\rho$, it is a bit trickier. The function~$\rho$ is defined in terms of~$\sigma(\xi)$, so we will discuss the choice of~$\sigma$ instead. The first thing we want from~$\sigma(\xi)$ is that it be decreasing and the integral $\int_{n-1}^n \frac{d\xi}{\sigma(\xi)}$, as a function of~$n$, be of smaller order than $\int_{0}^n \frac{d\xi}{\sigma(\xi)}$ as~$n \to +\infty$. This holds, e.g., for all positive powers of~$\xi$, but does not hold for~$e^\xi$. The sum, with alternating signs, of integrals over the segments~$I_n$ is basically the difference of times spent in the right and in the left vertical strips (see Section~\ref{sec:srbphi}). This sum is less than the last summand in modulus, and the total time spent in the first~$n$ blocks (it is $\int_{0}^n \frac{d\xi}{\sigma(\xi)}$) is asymptotically much larger, which makes the time-average-measures over the trajectory converge to~$\mu_\phi$.

However, there is the second property we need~$\rho(\eta)$ to have: it and its derivatives must decrease to zero faster than any negative power of~$-\eta$ as $\eta \to -\infty$, because this makes the function~$\trho(\zeta) = \rho(-1/\zeta)$ and its derivatives flat, which is crucial for Section~\ref{sec:smooth}.
This naturally led to making~$g$, the anti-derivative of $1/\sigma$, be a function of intermediate growth, such as~$e^{\sqrt{\xi}}$.

\subsection{Generalization to arbitrary manifolds}\label{sect:global}
In this section we will show how to adapt the example above from the cylinder to any closed surface or manifold of higher dimension.

Assume that the cylinder~$\Cyc$ is embedded in $\RR^3$ in a trivial way, i.e., as a product of the unit horizontal circle and the vertical segment~$[-1, 1]$, and glue it with its reflection with respect to its lower boundary circle. The reflection of our vector field then is smoothly glued to the vector field itself, because the latter is smooth and flat at the lower boundary.

In a neighborhood of the upper boundary circle of the original cylinder the vector field can be modified (e.g., by multiplying it by a flat function) so that we will be able to shrink the boundary circle into a point in such a way that this point becomes a source of the vector field. Furthermore, we can cut away the corresponding (w.r.t. the reflection) neighborhood of the lower boundary circle of the reflected cylinder, so that the remainder of the field becomes transverse to the new boundary circle (and directed inwards). Thus we obtain a smooth vector field on a disc, and this vector field has the same properties regarding the SRB-measures as the one on the cylinder had. The argument is the same as above. Note that when considering a pair of points of the phase space as in Section~\ref{sec:srbPhi}, if the points are on different sides of the circle~$S^1 \times \{-1\}$, we can replace the lower of the two points by its image under the reflection and then argue exactly as in Section~\ref{sec:srbPhi} to show that the physical measure of the square flow is supported on two points.

If we want to adapt this to higher dimensions, we can take a direct product of the vector field on the disk with a linear contracting vector field, thus obtaining a vector field on a ball of required dimension (the field is directed inwards at the boundary sphere). Then we take a closed manifold~$M$ (of dimension at least two), consider a gradient vector field of a Morse function that has a single maximum point and replace the vector field in a neighborhood of this point by our vector field on a ball\footnote{The field on the ball constructed above can be rectified near the boundary sphere and so can be glued smoothly to the field on the manifold. For the resulting flow, any set inside a ball that contains almost every its point will intersect almost every orbit of the flow.}. Since there was only one sink for the gradient flow, all trajectories but few went to that sink. They go inside the ball now, and there they get exponentially attracted to the two-dimensional invariant disk, and it is only the dynamics on the disk that is important when we consider their asymptotic behavior. This construction yields the following theorem.

\begin{thm}\label{thm:global}
For any closed $C^\infty$-smooth manifold of dimension at least two there exists a $C^\infty$-smooth flow $\varphi$ such that~$\varphi$ has a global physical and natural measure $\mu_{\varphi}$, its square~$\Phi$ also has a global physical and natural measure $\mu_\Phi$, but
\begin{itemize}
\item $\mu_\Phi \neq \mu_{\varphi} \times \mu_{\varphi},$
\item $A_{min}(\Phi) \subsetneqq A_{min}(\varphi) \times A_{min}(\varphi)$.
\end{itemize}
\end{thm}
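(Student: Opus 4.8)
The plan is to bootstrap from the flow $\varphi$ on $\Cyc$ built in the proof of Theorem~\ref{thm:srb}, using the two facts established there: the field $V$ is $C^\infty$ and flat, together with all its derivatives, at the lower boundary circle $S^1\times\{-1\}$ (Section~\ref{sec:smooth}), and for Lebesgue-a.e.\ point (resp.\ a.e.\ pair of points) the forward orbit asymptotically spends half of its time near $L$ and half near $R$ (resp.\ half near $(L,L)$ and half near $(R,R)$), as shown in Sections~\ref{sec:srbphi} and~\ref{sec:srbPhi}. First I would double the cylinder: embed $\Cyc$ in $\RR^3$ as the standard product of the unit circle with $[-1,1]$ and glue it to its mirror image across the lower boundary circle. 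Flatness of $V$ there makes its reflection agree with it to infinite order, so the glued field is $C^\infty$ on the doubled cylinder. Next, multiply $V$ by a flat cut-off near the upper boundary circle of the original copy and collapse that circle to a point, arranging it to be a hyperbolic source; on the reflected copy excise the corresponding neighborhood of its far boundary circle, leaving a circle transverse to the field with the field pointing inward. The outcome is a $C^\infty$ vector field on a closed disk $D^2$, inward-transverse on $\partial D^2$, carrying the dynamics of $\varphi$ on the (now interior) image of $\Cyf$.

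The second step raises the dimension and implants the disk flow. For a ball $B^k$, $k\ge 2$, take the direct product of the disk field with a linear contracting field on $\RR^{k-2}$; after rectifying near $\partial B^k$ this is a $C^\infty$ field, inward-transverse on the boundary sphere, whose orbits are attracted exponentially to the invariant $2$-disk $D^2\times\{0\}$. Now, given a closed $C^\infty$ manifold $M^k$, $k\ge 2$, fix a Morse function with a single maximum point and consider its gradient flow, for which that maximum is the unique sink; replace the field in a small trapping ball around it by a rectified copy of the ball field above, glued smoothly along the boundary sphere. In the resulting flow almost every orbit is eventually funnelled into the implanted ball (all but the stable sets of the finitely many other critical points fall into the original sink), then contracted exponentially onto the invariant $2$-disk, so its Birkhoff time averages are governed entirely by the disk dynamics; the same holds for a.e.\ orbit of $\Phi=\varphi\times\varphi$. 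The one delicate point in passing to the square is a pair of points lying on opposite sides of the gluing circle $S^1\times\{-1\}$: there one replaces the lower point by its reflection and then reruns the pair-of-points argument of Section~\ref{sec:srbPhi} verbatim. This yields that $\mu_\varphi=\frac12(\delta_L+\delta_R)$ is the global physical and natural measure of $\varphi$ and $\mu_\Phi=\frac12(\delta_{(L,L)}+\delta_{(R,R)})$ is the global physical and natural measure of $\Phi$, exactly as in Theorem~\ref{thm:srb}.

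Finally, since $\mathrm{supp}(\mu_\Phi)=\{(L,L),(R,R)\}$ has two points while $\mathrm{supp}(\mu_\varphi\times\mu_\varphi)=\{L,R\}^2$ has four, the two measures differ, giving the first assertion; and because a global natural measure has support equal to the minimal attractor (\cite[Theorem~1]{GI96}), we also get $A_{min}(\Phi)=\mathrm{supp}(\mu_\Phi)\subsetneqq \mathrm{supp}(\mu_\varphi)^2=A_{min}(\varphi)\times A_{min}(\varphi)$, which is the second assertion. I expect the only real obstacle to be keeping track of smoothness across the three successive gluings --- doubling, collapsing the source circle, and implanting the ball --- where the flatness of $V$ at the lower boundary (already proved in Section~\ref{sec:smooth}) together with a careful choice of cut-off and rectification does all the work; the dynamical part is a routine rerun of Sections~\ref{sec:srbphi} and~\ref{sec:srbPhi}, once one notes that exponential contraction onto the invariant $2$-disk does not affect time averages of continuous functions.
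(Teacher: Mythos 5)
Your proposal is correct and follows essentially the same route as the paper's own Section~\ref{sect:global}: doubling the cylinder across the flat lower boundary, collapsing the upper circle to a source and trimming the mirror copy to get a disk with inward-transverse boundary, stabilizing by a linear contraction to a ball, and implanting that ball at the unique sink of a gradient flow of a Morse function with a single maximum, with the same reflection trick for pairs straddling $S^1\times\{-1\}$. No substantive differences.
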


\vspace{0.3cm}

\medskip
\noindent
{\large Stanislav Minkov,\\}
Brook Institute of Electronic Control Machines, Moscow,\\
E-mail: \texttt{stanislav.minkov@yandex.ru}

\vspace{0.6cm}

\medskip
\noindent
{\large Ivan Shilin,\\}
National Research University Higher School of Economics, Moscow,\\
Faculty of Mathematics,\\
E-mail: \texttt{i.s.shilin@yandex.ru}

\end{document}